\documentclass[11pt]{article}
\usepackage{amssymb}
\usepackage{amsmath}
\usepackage{mathrsfs}
\usepackage{graphics}
\usepackage{graphicx}
\usepackage{xcolor}
\usepackage{subfigure}
\usepackage[T1]{fontenc}
\usepackage{latexsym,amssymb,amsmath,amsfonts,amsthm}\usepackage{txfonts}
\topmargin =0mm \headheight=0mm \headsep=0mm \textheight =220mm
\textwidth =175mm \oddsidemargin=0mm\evensidemargin =0mm
\sloppy \brokenpenalty=10000

\newcommand{\be}{\begin{eqnarray}}
\newcommand{\ben}{\begin{eqnarray*}}
\newcommand{\en}{\end{eqnarray}}
\newcommand{\enn}{\end{eqnarray*}}

\newtheorem{theorem}{Theorem}[section]
\newtheorem{lemma}{Lemma}[section]
\newtheorem{prp}[theorem]{Proposition}
\newtheorem{thm}[theorem]{Theorem}

\newtheorem{dfn}{Definition}[section]

\newtheorem{remark}{Remark}

\definecolor{rr}{rgb}{0,0,0}
\begin{document}
\renewcommand{\theequation}{\arabic{section}.\arabic{equation}}
\begin{titlepage}
\title{\bf Large deviations for stochastic porous media equations}
\author{ Rangrang Zhang\\
{\small  School of  Mathematics and Statistics,
Beijing Institute of Technology, Beijing, 100081, China.}\\
({\sf rrzhang@amss.ac.cn})}
\date{}
\end{titlepage}
\maketitle

\noindent\textbf{Abstract}:
 In this paper, we establish the Freidlin-Wentzell type large deviation principles for porous medium-type equations perturbed by small multiplicative noise. The porous medium operator $\Delta (|u|^{m-1}u)$ is allowed. Our proof is based on weak convergence approach.

\noindent \textbf{AMS Subject Classification}:\ \ Primary 60F10; Secondary 60H15, 35R60.

\noindent\textbf{Keywords}: large deviations; porous media equations; weak convergence approach; kinetic solution.

\section{Introduction}
In recent years, there has been a lot of interest in porous media equations both in deterministic and random case (see, e.g., \cite{BDaR,BDa,DaR,DGG,PR} and the references therein). In this paper, we are interested in the asymptotic behaviour of porous media equations with small multiplicative noise.
More precisely, fix any
$T>0$ and let $(\Omega,\mathcal{F},\mathbb{P},\{\mathcal{F}_t\}_{t\in
[0,T]},(\{\beta_k(t)\}_{t\in[0,T]})_{k\in\mathbb{N}})$ be a stochastic basis. Without loss of generality, here the filtration $\{\mathcal{F}_t\}_{t\in [0,T]}$ is assumed to be complete and $\{\beta_k(t)\}_{t\in[0,T]},k\in\mathbb{N}$, are one-dimensional real-valued i.i.d  $\{\mathcal{F}_t\}_{t\in [0,T]}-$Wiener processes. We use $\mathbb{E}$ to denote the expectation with respect to $\mathbb{P}$.
Fix any $N\in\mathbb{N}$, let $\mathbb{T}^N\subset\mathbb{R}^N$ denote the $N-$dimensional torus (suppose the periodic length is $1$).
We are concerned with the following porous media equations with stochastic forcing
\begin{eqnarray}\label{P-18}
\left\{
  \begin{array}{ll}
  du(t,x)=\Delta (|u(t,x)|^{m-1}u(t,x))dt+\Phi(u(t,x)) dW(t) \quad {\rm{in}}\ \mathbb{T}^N\times (0,T],\\
u(\cdot,0)=u_0(\cdot)\in L^{m+1}(\mathbb{T}^N) \quad {\rm{on}}\ \mathbb{T}^N,
  \end{array}
\right.
\end{eqnarray}
for $m\in (1, \infty)$. Here $u:(\omega,x,t)\in\Omega\times\mathbb{T}^N\times[0,T]\mapsto u(\omega,x,t):=u(x,t)\in\mathbb{R}$ is a random field,
that is, the equation is periodic in the space variable $x\in \mathbb{T}^N$,
 the coefficient $\Phi:\mathbb{R}\to\mathbb{R}$ is measurable and fulfills certain conditions specified later,
and $W$ is a cylindrical Wiener process defined on a given (separable) Hilbert space $U$ with
the form $W(t)=\sum_{k\geq 1}\beta_k(t) e_k,t\in[0,T]$, where $(e_k)_{k\geq 1}$ is a complete orthonormal base in the Hilbert space $U$. Clearly, equations (\ref{P-18}) can be viewed as a special case of a class of SPDE of the type
\begin{eqnarray}\label{P-19}
\left\{
  \begin{array}{ll}
  du(t,x)=\Delta A(u(t,x))dt+\Phi(u(t,x)) dW(t) \quad {\rm{in}}\ \mathbb{T}^N\times (0,T],\\
u(\cdot,0)=u_0(\cdot)\in L^{m+1}(\mathbb{T}^N) \quad {\rm{on}}\ \mathbb{T}^N.
  \end{array}
\right.
\end{eqnarray}

Having a stochastic forcing term in (\ref{P-19}) is very natural and important for various modeling problems arising in a wide variety of fields, e.g., physics, engineering, biology and so on. Up to now, the Cauchy problem for the stochastic equations (\ref{P-18}) has been studied by a lot of papers and different approaches, for example based on monotonicity in $H^{-1}$, based on entropy solutions and based on kinetic solutions have been developed. Specifically, in \cite{BDaR,P75,PR,RRW,RWW}, a monotone operator approach is employed in the space $H^{-1}$. When applied to the Nemytskii type diffusion coefficients, the condition could be verified if $\Phi$ are affine linear functions of $u$, otherwise, the map $u\rightarrow \Phi(u)$ are not known to be Lipschitz continuous in $H^{-1}$, even if $\Phi$ is smooth. In order to relax the assumptions on $\Phi$, alternative approaches based on $L^1-$techniques have been proposed.
In the deterministic setting, this has been realized via the theory of accretive operators going back to Crandall-Ligget \cite{CL}, entropy solutions studied by Otto \cite{O96}, Kruzkov \cite{K70} and kinetic solution by Lions et al. \cite{L-P-T} and Chen, Perthame \cite{CP}. In the stochastic setting, an entropy solution was first introduced by Kim in \cite{K} when studying the conservation laws driven by additive noise wherein the author proposed a method of compensated compactness to prove the existence of a stochastic weak entropy solution via vanishing viscosity approximation. Moreover, a Kruzkov-type method was used there to prove the uniqueness. Later, Vallet and Wittbold \cite{V-W} extended the results of Kim to the multi-dimensional Dirichlet problem with additive noise.  Concerning the case of the
equation with multiplicative noise, for Cauchy problem over the whole spatial space, Feng and Nualart \cite{F-N} introduced a notion of strong entropy solutions in order to prove the uniqueness of the entropy solution.  On the other hand, using a kinetic formulation, Debussche and Vovelle \cite{D-V-1} solved the Cauchy problem for stochastic conservation laws in any dimension by making use of a notion of kinetic solutions. In view of the equivalence between kinetic formulation and entropy solution, they obtained the existence and uniqueness of the entropy solutions.
The literature concerning the entropy and kinetic solutions to stochastic degenerate parabolic equations (\ref{P-19}) is quite extensive, let us mention some works. For instance, Bauzet et al. \cite{BVW} studied the degenerate parabolic-hyperbolic Cauchy problem under the assumptions that $A$ is globally Lipschitz and when $\Phi$ is Lipschitz, a behavior $A(u)=|u|^{m-1}u$ near the origin is allowed only for $m>2$. Moreover, by using a kinetic formulation, Gess and Hofmanov\'{a} \cite{GH} showed the global well-posedness of stochastic porous media equations, where the boundedness of $A'$ is released, $\Phi$ is assumed to be Lipschitz and $\sqrt{A'(u)}$ is $\gamma-$H\"{o}lder continuous with $\gamma>\frac{1}{2}$ which forces $m>2$. Recently, based on a notion of entropy solutions, Dareiotis et al. \cite{DGG} established the well-posedness of (\ref{P-18}) in the full range $m\in(1,\infty)$ under mild assumptions on the Nemytskii type diffusion coefficient $\Phi$, where the authors proved an $L^1-$contraction estimates as well as a generalized $L^1-$stability estimates.
There are also a lot of interest on the stochastic fast diffusion equation, that is, $m\in (0,1]$ (see \cite{BR,RRW}). To learn invariant measures for the stochastic porous media equations, we can refer readers to \cite{BDa,DaR}.

\

 From statistical mechanics point of view, asymptotic analysis for vanishing the noise force is
important and interesting for studying stochastic porous media, in which establishing large deviation principles is a core step for finer analysis as well as gaining deeper insight for the described physical evolutions. There are several works on large deviation principles (LDP) for the stochastic porous media equations, we mention some of them. {\color{rr}R\"{o}ckner} et al. \cite{RWW} established the LDP for a class of generalized stochastic porous media equations for both small noise and short time in the space $C([0,T];H^{-1})$ by utilizing the monotonicity of the porous medium operator in $H^{-1}$. Later, Liu \cite{L10}  established LDP for the distributions
of stochastic evolution equations with general monotone drift and small
multiplicative noise. As application, the author proved the LDP holds for stochastic porous media equations in the space $C([0,T];H^{-1})$.
The purpose of this paper is to prove that the kinetic solution to the stochastic porous medium-type equations (\ref{P-19}) satisfies Freidlin-Wentzell type LDP in the space $L^1([0,T];L^1(\mathbb{T}^N))$, which is a delicate result compared with \cite{RWW} and \cite{L10}. On the other hand, Dong et al. \cite{DZZ} established the LDP for quasilinear parabolic SPDE, where the authors handled the hard term $div(B(u)\nabla u)$ with $B$ being uniformly positive definite, bounded and Lipschitz. For our model, it holds that $\Delta A(u)=div(a^2(u)\nabla u)$ with $a^2(r)=A'(r)$, so it has similar structure as the term $div(B(u)\nabla u)$, but $a^2(r)$ is neither bounded nor Lipschitz. Thus, our case is much more complex and difficult than \cite{DZZ}.

To study the Freidlin-Wentzell's LDP for SPDE, an important tool is the weak convergence approach, which is developed by Dupuis and Ellis in \cite{DE}. The key idea of this approach is to prove certain variational representation formula about the Laplace transform of bounded continuous functionals, which then leads to the verification of the equivalence between the LDP and the Laplace principle. In particular, for Brownian functionals, an elegant variational representation formula has been established by Bou\'{e}, Dupuis in \cite{MP} and by Budhiraja, Dupuis in \cite{BD}. Recently, a sufficient condition to verify the large deviation criteria of Budhiraja et al. \cite{BDM} for functionals of Brownian motions is proposed by Matoussi et al. in \cite{MSZ}, which turns out to be more suitable for SPDEs arising from fluid mechanics. Thus, in the present paper, we adopt this new sufficient condition.

To our knowledge, the present paper is the first work towards establishing the LDP directly for the kinetic solution to the stochastic porous medium-type equations (\ref{P-19}). The starting point for our research was the paper of Dareiotis et al. \cite{DGG}, where the global well-posedness of entropy solution to (\ref{P-19}) was established. {\color{rr}According to the equivalence between entropy solution and kinetic solution (see Proposition \ref{prp-6} in the below)}, we firstly deduce the existence and uniqueness of kinetic solution to (\ref{P-19}). Due to the fact that the kinetic solutions are living in a rather irregular space comparing to various type solutions for parabolic SPDEs, it is indeed a challenge to establish LDP for the stochastic porous media equations with general noise force.
In order to prove the LDP holds for the kinetic solution in the space $L^1([0,T];L^1(\mathbb{T}^N))$, our proof strategy mainly consists of the following procedures. As an important part of the proof, we need to obtain the global well-posedness of the associated skeleton equations. For showing the uniqueness, we establish a general result concerning the stability of the strong solution map on the coefficients by utilizing the doubling of variables method. For showing the existence result, we adopt the similar approach as \cite{DGG}.
To complete the proof of the large deviation principle, we also need to study the weak convergence of the small noise perturbations of the problem (\ref{P-19}) in the random directions of the Cameron-Martin space of the driving Brownian motions. To verify the convergence of the randomly perturbed equation to the corresponding unperturbed equation in $L^1([0,T];L^1(\mathbb{T}^N))$, an auxiliary approximating process is introduced and the doubling of variables method is employed.

\

The rest of the paper is organised as follows. The mathematical formulation of stochastic porous media equations is presented in Section 2. In Section 3, we introduce the weak convergence method and state our main result. Section 4 is devoted to the study of the associated skeleton equations. The large deviation principle is proved in Section 5.

\section{Preliminaries}
Let us first introduce the notations which will be used later on.
 $C_b$ represents the space of bounded, continuous functions and $C^1_b$ stands for the space of bounded, continuously differentiable functions having bounded first order derivative. Let $\|\cdot\|_{L^p(\mathbb{T}^N)}$ denote the norm of Lebesgue space $L^p(\mathbb{T}^N)$ for $p\in (0,\infty]$. In particular, set $H=L^2(\mathbb{T}^N)$ with the corresponding norm $\|\cdot\|_H$. For all $a\geq0$, let
$H^a(\mathbb{T}^N)=W^{a,2}(\mathbb{T}^N)$ be the usual Sobolev space of order $a$ with the norm
\[
\|u\|^2_{H^a(\mathbb{T}^N)}=\sum_{|\alpha|=|(\alpha_1,...,\alpha_N)|=\alpha_1+\cdots+\alpha_N\leq a}\int_{\mathbb{T}^N}|D^{\alpha}u(x)|^2dx.
\]
 $H^{-a}(\mathbb{T}^N)$  stands for the topological dual of $H^a(\mathbb{T}^N)$, whose norm is denoted by $\|\cdot\|_{H^{-a}(\mathbb{T}^N)}$. Moreover, we use the brackets $\langle\cdot,\cdot\rangle$ to denote the duality between $C^{\infty}_c(\mathbb{T}^N\times \mathbb{R})$ and the space of distributions over $\mathbb{T}^N\times \mathbb{R}$.
Similarly, for $1\leq p\leq \infty$ and $q:=\frac{p}{p-1}$, the conjugate exponent of $p$, we denote
\[
\langle F, G \rangle:=\int_{\mathbb{T}^N}\int_{\mathbb{R}}F(x,\xi)G(x,\xi)dxd\xi, \quad F\in L^p(\mathbb{T}^N\times \mathbb{R}), G\in L^q(\mathbb{T}^N\times \mathbb{R}),
\]
and also for a measure $m$ on the Borel measurable space $\mathbb{T}^N\times[0,T]\times \mathbb{R}$
\[
m(\phi):=\langle m, \phi \rangle:=\int_{\mathbb{T}^N\times[0,T]\times \mathbb{R}}\phi(x,t,\xi)dm(x,t,\xi), \quad  \phi\in C_b(\mathbb{T}^N\times[0,T]\times \mathbb{R}).
\]
\subsection{Hypotheses}
 Set
\begin{eqnarray}\label{e-1}
a(r)=\sqrt{A'(r)},\quad \Psi(r)=\int^r_0a(s)ds.
\end{eqnarray}
Following \cite{DGG}, we impose conditions on the nonlinearity $A$ via assumptions on $\Psi$, with some constants $m>1$, $K\geq 1$, which are fixed throughout the whole paper.
Precisely, we assume
\begin{description}
  \item[\textbf{Hypothesis H}] The initial value $u_0$ satisfies $\|u_0\|^{m+1}_{L^{m+1}(\mathbb{T}^N)}<\infty$. The function $A: \mathbb{R}\rightarrow \mathbb{R}$ is differentiable, strictly increasing and odd. The function $a$ is differentiable away from the origin and satisfies the bounds
 \begin{eqnarray}
 |a(0)|\leq K, \quad |a'(r)|\leq K|r|^{\frac{m-3}{2}},\quad  \rm{if}\  r>0,
  \end{eqnarray}
as well as
\begin{eqnarray}
 K a(r)\geq I_{|r|\geq 1}, \quad K|\Psi(r)-\Psi(s)|\geq \left\{
                                                          \begin{array}{ll}
                                                            |r-s|, & \ \rm{if}\ \ |r|\vee |s|\geq 1, \\
                                                            |r-s|^{\frac{m+1}{2}}, & \ \ \rm{if}\ |r|\vee |s|< 1.
                                                          \end{array}
                                                        \right.
  \end{eqnarray}
      For each $u\in \mathbb{R}$, the map $\Phi(u): U\rightarrow H$ is defined by $\Phi(u) e_k=g^k(\cdot, u)$, where each $g^k(\cdot,u)$ is a regular function on $\mathbb{T}^N$. Denote by $g=(g^1,g^2,\cdot\cdot\cdot)$.
      More precisely, we assume that $g: \mathbb{T}^N\times \mathbb{R}\rightarrow l_2$ satisfies the bounds,
\begin{eqnarray}\label{equ-28}
G(x,u)=|g(x,u)|_{l^2}:=\Big(\sum_{k\geq 1}|g^k(x,u)|^2\Big)^{\frac{1}{2}}&\leq& K(1+|u|),\\
\label{equ-29}
|g(x,u)-g(y,v)|_{l^2}:=\Big(\sum_{k\geq 1}|g^k(x,u)-g^k(y,v)|^2\Big)^{\frac{1}{2}}&\leq& K(|x-y|+|u-v|),
\end{eqnarray}
for $x, y\in \mathbb{T}^N, u,v\in \mathbb{R}$.
For $g=(g^1,g^2,\cdot\cdot\cdot), \tilde{g}=(\tilde{g}^1,\tilde{g}^2,\cdot\cdot\cdot)$ as above, we set
\begin{eqnarray}\label{e-14}
d(g, \tilde{g}):=\sup_{u\in \mathbb{R},x\in \mathbb{T}^N}\frac{\sum_{k\geq 1}|g^k(x,u)-\tilde{g}^k(x,u)|^2}{(1+|u|)^{m+1}}
\end{eqnarray}
\end{description}
\begin{remark}
In order to obtain the large deviations, our assumptions are stronger than those used by \cite{DGG} to prove the existence and uniqueness of (\ref{P-19}), that is,
the condition (\ref{equ-29}) on $g$ is a special case of Assumption 2.2 in \cite{DGG} with parameters $\kappa=\frac{1}{2}$ and $\bar{\kappa}=1$.
\end{remark}
Based on the above notations, equation (\ref{P-19}) can be rewritten as
\begin{eqnarray}\label{P-19-1}
\left\{
  \begin{array}{ll}
  du(t,x)=\Delta A(u(t,x))dt+\sum_{k\geq 1}g^k(x,u(t,x)) d\beta_k(t) \quad {\rm{in}}\ \mathbb{T}^N\times (0,T],\\
u(\cdot,0)=u_0(\cdot)\in L^{m+1}(\mathbb{T}^N) \quad {\rm{on}}\ \mathbb{T}^N.
  \end{array}
\right.
\end{eqnarray}
We denote by $\mathcal{E}(A,g,u_0)$ the Cauchy problem (\ref{P-19-1}).

\subsection{Entropy solution and global well-posedness}
Firstly, we recall the following entropy solution of (\ref{P-19-1}) introduced by \cite{DGG}.
Set
\begin{eqnarray}
\Psi_f(r):=\int^r_0f(s)a(s)ds,\quad \forall \ f\in C(\mathbb{R}).
\end{eqnarray}
Clearly, by (\ref{e-1}), it gives that $\Psi=\Psi_1$.
\begin{dfn}\label{dfn-4}(Entropy solution)
An entropy solution of $\mathcal{E}(A,g,u_0)$ is a predictable stochastic process $u: \Omega\times [0,T]\rightarrow L^{m+1}(\mathbb{T}^N)$ such that
\begin{description}
  \item[(i)] $u\in L^{m+1}(\Omega\times [0,T];L^{m+1}(\mathbb{T}^N))$
  \item[(ii)] For all $f\in C_b(\mathbb{R})$, we have $\Psi_f(u)\in L^2(\Omega\times [0,T];H^1(\mathbb{T}^N))$ and
\[
\partial_i \Psi_f(u)=f(u)\partial_i \Psi(u).
\]

  \item[(iii)] For all convex function $\eta\in C^2(\mathbb{R})$ with $\eta''$ compactly supported and all non-negative $\phi\in C^1_c( \mathbb{T}^N\times[0,T))$, we have
\begin{eqnarray}\notag
-\int^T_0\int_{\mathbb{T}^N}\eta(u)\partial_t \phi dxdt&\leq & \int_{\mathbb{T}^N}\eta(u_0)\phi(0)dx+\int^T_0\int_{\mathbb{T}^N}q_{\eta}(u)\Delta \phi dxdt\\ \notag
&& +\int^T_0\int_{\mathbb{T}^N}\Big(\frac{1}{2}\phi\eta''(u)G^2(x,u)-\phi\eta''(u)|\nabla \Psi(u)|^2\Big)dxdt\\
\label{e-2}
&& + \sum_{k\geq1}\int^T_0\int_{\mathbb{T}^N}\phi\eta'(u)g^k(x,u)dxd\beta_k(t), \quad a.s.,
\end{eqnarray}
where $q_{\eta}$ is any function satisfying $q'_{\eta}(\xi)=\eta'(\xi)a^2(\xi)$.
\end{description}

\end{dfn}
\begin{remark}
$(\eta, q_{\eta})$ is called entropy-entropy flux pair. If $\eta(r)=\pm r$, it follows from (\ref{e-2}) that any entropy solution satisfies (\ref{P-19-1}) is a weak solution of (\ref{P-19-1})  (weak in both space and time).
\end{remark}

Referring to Theorem 2.1 in \cite{DGG}, the following global well-posedness is proved.
\begin{thm}\label{th-1}
Let $(A, g, u_0)$ satisfy Hypotheses H. Then there exists a unique entropy solution to (\ref{P-19-1}) with initial condition $u_0$. Moreover, if $\tilde{u}$ is the unique entropy solution to (\ref{P-19-1}) with initial value $\tilde{u}_0$, then
\begin{eqnarray}\label{e-3}
\underset{0\leq t\leq T}{{\rm{ess\sup}}}\ \mathbb{E}\|u(t)-\tilde{u}(t)\|_{L^1(\mathbb{T}^N)}\leq \mathbb{E}\|u_0-\tilde{u}_0\|_{L^1(\mathbb{T}^N)}.
\end{eqnarray}

\end{thm}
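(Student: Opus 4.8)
The plan is to follow Dareiotis--Gess--Gvalani \cite{DGG} and establish the two assertions of Theorem \ref{th-1} separately: first uniqueness, obtained together with the $L^1$-contraction (\ref{e-3}), and then existence.

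\textbf{Uniqueness and the contraction (\ref{e-3}).} Let $u,\tilde u$ be two entropy solutions of (\ref{P-19-1}) on the same stochastic basis, with initial data $u_0,\tilde u_0$. I would run Kruzkov's doubling of variables in its stochastic form: write the entropy inequality (\ref{e-2}) for $u$ in the variables $(x,t)$ and for $\tilde u$ in the variables $(y,s)$, taking $\eta$ a smooth convex approximation of $r\mapsto|r|$ with $\eta''$ compactly supported, as Definition \ref{dfn-4}(iii) requires; test the first against $\phi(x,t)=\rho_\delta(x-y)\,\varphi_\lambda(t-s)\,\theta$ and the second symmetrically, with $\rho_\delta$, $\varphi_\lambda$ spatial and temporal mollifiers and $\theta\ge0$ a cutoff, then apply the It\^o product formula to the two resulting semimartingales. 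The stochastic terms assemble into a martingale, with vanishing expectation, plus an It\^o correction $\tfrac12\sum_k\int\phi\,\eta''\,|g^k(x,u)-g^k(y,\tilde u)|^2$ which by (\ref{equ-29}) is $\le K^2\int\phi\,\eta''(|x-y|^2+|u-\tilde u|^2)$; letting $\delta\to0$ removes the $|x-y|^2$ part and leaves a Gronwall-type term. The second-order contributions $\int q_\eta(u)\Delta_x\phi+\int q_\eta(\tilde u)\Delta_y\phi$ must be combined with the parabolic dissipation $-\int\phi\,\eta''(|\nabla_x\Psi(u)|^2+|\nabla_y\Psi(\tilde u)|^2)$ so as to yield, after the doubling limit, a nonpositive quantity; this is where Definition \ref{dfn-4}(ii) (the chain rule $\partial_i\Psi_f(u)=f(u)\partial_i\Psi(u)$) and the lower bounds on $|\Psi(r)-\Psi(s)|$ in Hypothesis H enter, since they ensure the dissipation dominates the mixed diffusion term uniformly, in particular on $\{u=\tilde u\}$ where $A$ degenerates. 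Passing $\lambda\to0$, then $\delta\to0$, and taking $\theta\uparrow1$ yields $\mathbb{E}\|u(t)-\tilde u(t)\|_{L^1(\mathbb{T}^N)}\le\mathbb{E}\|u_0-\tilde u_0\|_{L^1(\mathbb{T}^N)}$ for a.e.\ $t$, which is (\ref{e-3}) and in particular forces uniqueness.

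\textbf{Existence.} I would regularize into a uniformly parabolic problem, replacing $A$ by $A^\varepsilon(r)=A(r)+\varepsilon r$; then $\mathcal{E}(A^\varepsilon,g,u_0)$ is nondegenerate and its well-posedness is classical (for instance by the monotone-operator method in $H^{-1}$, available since $A^\varepsilon$ is coercive), producing a solution $u^\varepsilon$. The decisive step is $\varepsilon$-uniform a priori bounds: using the $\eta(r)=\tfrac12 r^2$ entropy balance together with $G^2\le K^2(1+|u|)^2$ from (\ref{equ-28}) one gets $u^\varepsilon$ bounded in $L^{m+1}(\Omega\times[0,T]\times\mathbb{T}^N)$ and in $L^2(\Omega;L^\infty(0,T;H))$, and simultaneously $\Psi(u^\varepsilon)$ --- hence every $\Psi_f(u^\varepsilon)$, $f\in C_b$, by the chain rule --- bounded in $L^2(\Omega\times[0,T];H^1(\mathbb{T}^N))$. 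These give weak-$*$ limits; to identify the nonlinearities $A(u^\varepsilon),\Psi(u^\varepsilon),g^k(\cdot,u^\varepsilon)$ one upgrades to strong $L^1$ convergence of $u^\varepsilon$ by a stochastic compactness argument --- tightness of the laws on a suitable path space, a Skorokhod/Jakubowski representation, and then the uniqueness just proved to transfer the convergence back to the original basis in probability. One finally checks that the limit obeys (\ref{e-2}) and Definition \ref{dfn-4}(i)--(ii), i.e.\ is an entropy solution.

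\textbf{Main obstacle.} As in \cite{DGG}, the hard part is the doubling-of-variables estimate: since $a^2=A'$ is unbounded, non-Lipschitz and degenerate at the origin, the second-order terms cannot be treated by the classical Kruzkov calculus but only through a careful accounting of the parabolic dissipation measures, with the exponent $\tfrac{m+1}{2}$ in Hypothesis H governing how that dissipation near $0$ absorbs the error; carrying out the limits $\lambda\to0$ and $\delta\to0$ in the correct order while keeping the It\^o correction under control is the key technical point. Since Theorem \ref{th-1} is precisely \cite[Theorem 2.1]{DGG} --- Hypothesis H being a specialization of their assumptions, as noted in the remark following Hypothesis H with their parameters $\kappa=\tfrac12$, $\bar\kappa=1$ --- in practice I would simply invoke their result after checking that our hypotheses imply theirs.
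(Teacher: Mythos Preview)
Your proposal is correct and aligns with the paper's treatment: the paper does not give an independent proof of this theorem but simply records it as a citation of \cite[Theorem~2.1]{DGG}, after noting that Hypothesis~H is a specialization of the assumptions there (with $\kappa=\tfrac12$, $\bar\kappa=1$). Your final paragraph---invoking \cite{DGG} after checking the hypotheses---is exactly what the paper does, and your sketch of the underlying argument is a fair summary of the strategy in \cite{DGG}.
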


The authors of the work \cite{DGG} also show the stability of the solution map with respect to the coefficients in the following sense.
\begin{thm}
Let $(A_n)_{n\in \mathbb{N}}, (g_n)_{n\in \mathbb{N}}$ satisfy Hypothesis H uniformly in $n$ and the initial values satisfy $\sup_{n}\|u_{0,n}\|_{L^{m+1}(\mathbb{T}^N)}< \infty$. Assume furthermore that $A_n\rightarrow A$ uniformly on compact sets of $\mathbb{R}$, $u_{0,n}\rightarrow u_0$ in $L^{m+1}(\mathbb{T}^N)$ and $d(g_n, g)\rightarrow 0$, as $n\rightarrow \infty$. Let $u_n, u$ be the entropy solutions of $\mathcal{E}(A_n, g_n, u_{0,n}), \mathcal{E}(A, g, u_{0})$, respectively. Then $u_n\rightarrow u$ in $L^1(\Omega\times [0,T]\times \mathbb{T}^N)$, as $n\rightarrow \infty$.
\end{thm}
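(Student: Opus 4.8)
The plan is to run a doubling-of-variables argument directly between $u_n$, the entropy (equivalently kinetic) solution of $\mathcal{E}(A_n,g_n,u_{0,n})$, and $u$, the solution of $\mathcal{E}(A,g,u_0)$, carefully tracking the extra terms generated by the mismatch of the coefficients. Throughout, $\Psi_n$ and $a_n$ denote the functions of \eqref{e-1} with $A$ replaced by $A_n$. First I would record uniform-in-$n$ a priori estimates: testing the entropy inequality \eqref{e-2} with convex entropies approximating $\eta(r)=|r|^{m+1}$ (truncating $\eta''$ to meet the admissibility requirement of Definition \ref{dfn-4}, then removing the truncation by monotone convergence) and with $\eta(r)=r^2$, and using the growth bound $G(x,u)\le K(1+|u|)$ together with Hypothesis H holding uniformly in $n$, one gets
\[
\sup_n\ \mathbb{E}\ \underset{0\le t\le T}{\mathrm{ess\,sup}}\ \|u_n(t)\|^{m+1}_{L^{m+1}(\mathbb{T}^N)}<\infty,\qquad
\sup_n\ \mathbb{E}\int_0^T\|\nabla\Psi_n(u_n)\|^2_{L^2(\mathbb{T}^N)}\,dt<\infty.
\]
The first bound, since $m+1>1$, gives equi-integrability of $(u_n)$ on $\Omega\times[0,T]\times\mathbb{T}^N$, so it will suffice to prove $\underset{t\le T}{\mathrm{ess\,sup}}\ \mathbb{E}\|u_n(t)-u(t)\|_{L^1(\mathbb{T}^N)}\to0$, which then yields the claimed convergence in $L^1(\Omega\times[0,T]\times\mathbb{T}^N)$.

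The core of the proof is the doubling step. Writing the kinetic functions $\chi_{u_n}(x,\xi)$ and $\chi_u(y,\zeta)$ of the two solutions, I would compute the evolution of
\[
\int_{\mathbb{T}^N}\int_{\mathbb{T}^N}\int_{\mathbb{R}}\int_{\mathbb{R}}\chi_{u_n}(x,\xi)\,\chi_u(y,\zeta)\,\varrho_\delta(x-y)\,\psi_\varepsilon(\xi-\zeta)\,dx\,dy\,d\xi\,d\zeta,
\]
with mollification in both the space and the velocity variables. Three groups of terms appear: (a) second-order terms, which after integration by parts produce the parabolic dissipation measures of the two solutions (with the favourable sign) and a cross term of the form $\int_{\mathbb{T}^N}a_n(u_n)a(u)\,\nabla\Psi_n(u_n)\cdot\nabla\Psi(u)\,dx$, the chain-rule identity $\partial_i\Psi_{n,f}(u_n)=f(u_n)\partial_i\Psi_n(u_n)$ being what makes the a priori singular contributions collapse; (b) the It\^o correction and stochastic-integral terms, whose difference is bounded by $d(g_n,g)$ via \eqref{e-14} together with the uniform $L^{m+1}$ estimate; (c) the terms caused by $A_n\ne A$, which I would estimate by splitting into $\{|u_n|\vee|u|\le R\}$, where $A_n\to A$ uniformly on compacts (hence $a_n,\Psi_n$ converge locally uniformly) makes the mismatch small, and $\{|u_n|\vee|u|>R\}$, which has uniformly small measure by Chebyshev's inequality and the $L^{m+1}$ bound.

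Sending first $\delta\to0$ and $\varepsilon\to0$, then $R\to\infty$, and finally $n\to\infty$, the argument should give
\[
\limsup_{n\to\infty}\ \underset{0\le t\le T}{\mathrm{ess\,sup}}\ \mathbb{E}\|u_n(t)-u(t)\|_{L^1(\mathbb{T}^N)}\ \le\ \lim_{n\to\infty}\mathbb{E}\|u_{0,n}-u_0\|_{L^1(\mathbb{T}^N)}=0,
\]
which combined with the uniform equi-integrability above upgrades to $u_n\to u$ in $L^1(\Omega\times[0,T]\times\mathbb{T}^N)$.

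The main obstacle is group (c) together with the bookkeeping of the parabolic term in (a): unlike the $L^1$-contraction of Theorem \ref{th-1}, where the dissipation measures are tested against matching weights and simply dropped, here $a_n^2=A_n'$ differs from $a^2=A'$, so the cancellation is only approximate, and one must show that the residual — once the double mollification is removed — is dominated by a quantity tending to $0$ using only the uniform structural constants $(K,m)$, the uniform convergence $A_n\to A$ on compact sets, and the uniform a priori bounds. Since no quantitative rate is available, the order of the limits must be respected; this is the same mechanism behind the stability-on-coefficients result announced in the introduction, and the argument follows closely the doubling-of-variables scheme of \cite{DGG}.
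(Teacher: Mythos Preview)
The paper does not give its own proof of this theorem; it is quoted from \cite{DGG} and stated without argument. Your outline is exactly the doubling-of-variables strategy of \cite{DGG}, and the paper's Theorem~\ref{thm-2} (the deterministic analogue for the skeleton equation) carries out the same scheme in full detail, so you can read that proof as a template.

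One concrete point where your sketch would need adjustment: you propose to send the mollification scales to zero first, then $R\to\infty$, then $n\to\infty$. This order does not work. After the chain-rule manipulations, the sum of the second-order and gradient-cross terms in your groups (a) and (c) collapses to an expression of the form
\[
\int \partial^2_{x_iy_i}\rho_\gamma(x-y)\int\!\!\int \psi_\delta(\xi'-r)\,|a_n(\xi')-a(r)|^2\,dr\,d\xi'\, d\nu^1\otimes d\nu^2,
\]
which carries a factor $\gamma^{-2}$ from the spatial mollifier that blows up as $\gamma\to0$. The correct procedure (visible in the paper's proof of Theorem~\ref{thm-2} and in the Cauchy argument inside Theorem~\ref{thm-1}) is to couple the parameters: set $\delta=\gamma^{3/(2\alpha)}$ for some $\alpha\in(0,1\wedge m/2)$, introduce $R_\lambda=\sup\{R:|a(r)-a_n(r)|\le\lambda\ \forall\,|r|<R\}$, and then choose $\lambda$ and $\gamma$ as explicit functions of $n$ so that all error terms (including $\gamma^{-2}\lambda^2$, $\gamma^{-2}\delta^{2\alpha}$, and the tail $\gamma^{-2}\|I_{|u|\ge R_\lambda}(1+|u|)\|_{L^m}^m$) vanish together as $n\to\infty$. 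Your last paragraph hints that you are aware the order of limits is delicate, but the specific sequencing written just above it would fail.
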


\subsection{Kinetic solution and generalized kinetic solution}
In this subsection, we pay attention to the definition of kinetic solution. We introduce the kinetic solution to equations (\ref{P-19-1}) as follows. Keeping in mind that we are working on the stochastic basis $(\Omega,\mathcal{F},\mathbb{P},\{\mathcal{F}_t\}_{t\in [0,T]},(\beta_k(t))_{k\in\mathbb{N}})$.
\begin{dfn}(Kinetic measure)\label{dfn-3}
 A map $m$ from $\Omega$ to the set of non-negative, finite measures over $\mathbb{T}^N\times [0,T]\times\mathbb{R}$ is said to be a kinetic measure, if
\begin{description}
  \item[1.] $ m $ is measurable, that is, for each $\phi\in C_b(\mathbb{T}^N\times [0,T]\times \mathbb{R}), \langle m, \phi \rangle: \Omega\rightarrow \mathbb{R}$ is measurable,
  \item[2.] $m$ vanishes for large $\xi$, i.e.,
\begin{eqnarray*}\label{equ-37}
\lim_{R\rightarrow +\infty}\mathbb{E}[m(\mathbb{T}^N\times [0,T]\times B^c_R)]=0,
\end{eqnarray*}
where $B^c_R:=\{\xi\in \mathbb{R}, |\xi|\geq R\}$
  \item[3.] for every $\phi\in C_b(\mathbb{T}^N\times \mathbb{R})$, the process
\[
(\omega,t)\in\Omega\times[0,T]\mapsto \int_{\mathbb{T}^N\times [0,t]\times \mathbb{R}}\phi(x,\xi)dm(x,s,\xi)\in\mathbb{R}
\]
is predictable.
\end{description}
\end{dfn}
Let $\mathcal{M}^+_0(\mathbb{T}^N\times [0,T]\times \mathbb{R})$ be the space of all bounded, nonnegative random measures $m$ satisfying (\ref{equ-37}).

\begin{dfn}(Kinetic solution)\label{dfn-1}
Let $u_0\in L^{m+1}(\mathbb{T}^N)$. A measurable function $u: \mathbb{T}^N\times [0,T]\times\Omega\rightarrow \mathbb{R}$ is called a kinetic solution to (\ref{P-19-1}) with initial datum $u_0$, if
\begin{description}
  \item[1.] $(u(t))_{t\in[0,T]}$ is predictable,
  \item[2.] there exists $C_m>0$ such that
\[
\mathbb{E}\left(\int^T_0\|u(t)\|^{m+1}_{L^{m+1}(\mathbb{T}^N)}dt\right)\leq C_m,
\]
\item[3.] there exists a kinetic measure $m$ such that $f:= I_{u>\xi}$ satisfies the following
\begin{eqnarray*}\notag
&&\int^T_0\langle f(t), \partial_t \varphi(t)\rangle dt+\langle f_0, \varphi(0)\rangle +\int^T_0\langle f(t), a^2(\xi) \Delta \varphi (t)\rangle dt\\ \notag
&=& \int^T_0\int_{\mathbb{T}^N}\partial_{\xi}\varphi (x,t,u(x,t))|\nabla \Psi(u)|^2dxdt\\ \notag
&& -\sum_{k\geq 1}\int^T_0\int_{\mathbb{T}^N} g^k(x, u(x,t))\varphi (x,t,u(x,t))dxd\beta_k(t) \\
\label{P-21}
&& -\frac{1}{2}\sum_{k\geq 1}\int^T_0\int_{\mathbb{T}^N}\partial_{\xi}\varphi (x,t,u(x,t))G^2(x,u(t,x))dxdt+ m(\partial_{\xi} \varphi), \ a.s. ,
\end{eqnarray*}
for all $\varphi\in C^1_c(\mathbb{T}^N\times [0,T)\times \mathbb{R})$, where $u(t)=u(\cdot,t,\cdot)$, $G^2=\sum^{\infty}_{k=1}|g^k|^2$ and $a(\cdot)$ is defined by (\ref{e-1}).
\end{description}
\end{dfn}

In order to prove the existence of a kinetic solution, the generalized kinetic solution was introduced in \cite{D-V-1}.
\begin{dfn}(Young measure)
 Let $(X,\lambda)$ be a finite measure space. Let $\mathcal{P}_1(\mathbb{R})$ denote the set of all (Borel) probability measures on $\mathbb{R}$. A map $\nu:X\to\mathcal{P}_1(\mathbb{R})$ is
said to be a Young measure on $X$, if for each $\phi\in C_b(\mathbb{R})$, the map $z\in X\mapsto \nu_z(\phi)\in\mathbb{R}$ is measurable. Next, we say that a Young measure $\nu$ vanishes at infinity if, for each  $p\geq 1$, the following holds
\begin{eqnarray}\label{equ-26}
\int_X\int_{\mathbb{R}}|\xi|^pd\nu_z(\xi)d\lambda(z)<+\infty.
\end{eqnarray}

\end{dfn}
\begin{dfn}(Kinetic function)
Let $(X,\lambda)$ be a finite measure space. A measurable function $f: X\times \mathbb{R}\rightarrow [0,1]$
is called a kinetic function, if there exists a Young measure $\nu$ on $X$ that vanishes at infinity such that $\forall\xi\in \mathbb{R}$
\[
f(z,\xi)=\nu_z(\xi,+\infty)
\]
holds for $\lambda-a.e.$ $z\in X,$.
We say that $f$ is an equilibrium if there exists a measurable function $u: X\rightarrow \mathbb{R}$ such that $f(z,\xi)=I_{u(z)>\xi}$ a.e., or equivalently, $\nu_z=\delta_{u(z)}$ for $\lambda- a.e. \ z\in X$.
\end{dfn}

 Let $f: X\times \mathbb{R}\rightarrow [0,1]$ be a kinetic function, we use $\bar{f}$ to denote its conjugate function $\bar{f}:=1-f$. We also denote by $\Lambda_f$ the function defined by $\Lambda_f(z,\xi)=f(z,\xi)-I_{0>\xi}$. This correction to $f$ is integral on $\mathbb{R}$. In fact, the function $\Lambda_f$ is decreasing faster than any power of $|\xi|$ at infinity. Indeed, we have $\Lambda_f(z,\xi)=-\nu_{z}(-\infty,\xi)$ when $\xi<0$ and $\Lambda_f(z,\xi)=\nu_{z}(\xi,+\infty)$ when $\xi>0$. Therefore, by (\ref{equ-26}), it yields
\begin{eqnarray}\label{qq-6}
|\xi|^p\int_X|\Lambda_f(z,\xi)|d\lambda(z)\leq \int_X\int_{\mathbb{R}}|\xi|^pd\nu_z(\xi)d\lambda(z)<\infty
\end{eqnarray}
for all $\xi\in \mathbb{R}$ and $1\leq p<\infty$.

\begin{dfn}(Generalized kinetic solution)\label{dfn-2}
Let $f_0:\Omega\times\mathbb{T}^N\times \mathbb{R}\rightarrow [0,1]$ be a kinetic function with
$(X,\lambda)=(\Omega\times\mathbb{T}^N,\mathbb{P}\otimes dx)$. A measurable function $f:\Omega\times\mathbb{T}^N\times[0,T]\times\mathbb{R}\rightarrow[0,1]$ is said to be a generalized kinetic solution to (\ref{P-19-1}) with initial datum $f_0$, if
\begin{description}
  \item[1.] $(f(t))_{t\in[0,T]}$ is predictable,
 \item[2.] $f$ is a kinetic function with $(X,\lambda)=(\Omega\times\mathbb{T}^N\times[0,T],\mathbb{P}\otimes dx\otimes dt)$ and there exists a constant $C_m>0$ such that $\nu:=-\partial_{\xi} f$ fulfills the following
\begin{eqnarray*}
\mathbb{E}\left(esssup_{t\in [0,T]}\int_{\mathbb{T}^N}\int_{\mathbb{R}}|\xi|^{m+1}d\nu_{x,t}(\xi)dxdt\right)\leq C_m,
\end{eqnarray*}
\item[3.] there exists a kinetic measure $m$ such that for $\varphi\in C^1_c(\mathbb{T}^N\times [0,T)\times \mathbb{R})$,
\begin{eqnarray}\label{P-22}\notag
&&\int^T_0\langle f(t), \partial_t \varphi(t)\rangle dt+\langle f_0, \varphi(0)\rangle +\int^T_0\langle f(t), a^2(\xi)\Delta \varphi (t)\rangle dt\\ \notag
&=& \int^T_0\int_{\mathbb{T}^N}\int_{\mathbb{R}}\partial_{\xi}\varphi (x,t,\xi)|\nabla \Psi(\xi)|^2d\nu_{x,t}(\xi)dxdt\\ \notag
&& -\sum_{k\geq 1}\int^T_0\int_{\mathbb{T}^N}\int_{\mathbb{R}} g^k(x,\xi)\varphi (x,t,\xi)d\nu_{x,t}(\xi)dxd\beta_k(t) \\
&& -\frac{1}{2}\int^T_0\int_{\mathbb{T}^N}\int_{\mathbb{R}}\partial_{\xi}\varphi (x,t,\xi)G^2(x,\xi)d\nu_{x,t}(\xi)dxdt+ m(\partial_{\xi} \varphi),\  a.s..
\end{eqnarray}
\end{description}
\end{dfn}
Referring to \cite{D-V-1}, almost surely, any generalized solution admits possibly different left and right weak limits at any point $t\in[0,T]$. This property is important for establishing a comparison principle which allows to prove uniqueness. Also, it allows us to see that the weak form (\ref{P-22}) of the equation satisfied by a generalized kinetic solution can be strengthened. We write below a formulation which is weak only respect to $x$ and $\xi$.
The following result is proved in \cite{D-V-1}.
\begin{prp}(Left and right weak limits)\label{prp-3} Let $f_0$ be a kinetic initial datum and $f$ be a generalized kinetic solution to (\ref{P-19-1}) with initial {\color{rr}value} $f_0$. Then $f$ admits, almost surely, left and right limits respectively at every point $t\in [0,T]$. More precisely, for any  $t\in [0,T]$, there exist kinetic functions $f^{t\pm}$ on $\Omega\times \mathbb{T}^N\times \mathbb{R}$ such that $\mathbb{P}-$a.s.
\[
\langle f(t-r),\varphi\rangle\rightarrow \langle f^{t-},\varphi\rangle
\]
and
\[
\langle f(t+r),\varphi\rangle\rightarrow \langle f^{t+},\varphi\rangle
\]
as $r\rightarrow 0$ for all $\varphi\in C^1_c(\mathbb{T}^N\times \mathbb{R})$. Moreover, almost surely,
\[
\langle f^{t+}-f^{t-}, \varphi\rangle=-\int_{\mathbb{T}^N\times[0,T]\times \mathbb{R}}\partial_{\xi}\varphi(x,\xi)I_{\{t\}}(s)dm(x,s,\xi).
\]
In particular, almost surely, the set of $t\in [0,T]$ fulfilling that $f^{t+}\neq f^{t-}$ is countable.
\end{prp}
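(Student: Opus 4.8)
I would follow the approach of Debussche--Vovelle \cite{D-V-1}, adapted to the degenerate parabolic equation (\ref{P-19-1}). Fix a countable set $\mathcal{D}\subset C^1_c(\mathbb{T}^N\times\mathbb{R})$ which is dense in $C^1_c(\mathbb{T}^N\times\mathbb{R})$ for its natural topology. For each $\psi\in\mathcal{D}$, insert into the kinetic formulation (\ref{P-22}) test functions of the split form $\varphi(x,t,\xi)=\psi(x,\xi)\alpha(t)$ with $\alpha\in C^1_c([0,T))$ arbitrary. Integrating by parts in $t$, one reads off that the distributional $t$-derivative of the scalar map $t\mapsto\langle f(t),\psi\rangle$ is the sum of three contributions: (i) a function of $t$ lying in $L^1(0,T)$, collecting the drift terms $\langle f(t),a^2\Delta\psi\rangle$ and those carrying $|\nabla\Psi(\xi)|^2$ and $G^2(x,\xi)$ against $d\nu_{x,t}$, integrable in $t$ thanks to Definition \ref{dfn-2}(2) and the growth bounds in Hypothesis~H; (ii) a continuous martingale, the stochastic integral against $(\beta_k)_{k\geq1}$ (it admits a time-continuous modification by the Burkholder--Davis--Gundy inequality and Kolmogorov's criterion); and (iii) a scalar signed measure on $[0,T]$, namely the image of $\partial_\xi\psi\,dm$ under the time projection $(x,s,\xi)\mapsto s$. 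Hence, $\mathbb{P}$-a.s., $t\mapsto\langle f(t),\psi\rangle$ admits a representative $G_\psi$ having left and right limits at every $t\in[0,T]$, and, only contribution (iii) being discontinuous, $G_\psi^{t+}-G_\psi^{t-}=-\int_{\mathbb{T}^N\times\{t\}\times\mathbb{R}}\partial_\xi\psi\,dm$ for every $t$.

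Intersecting the exceptional null sets over the countable family $\mathcal{D}$, there is a single event of probability one on which $G_\psi^{t\pm}$ exist for all $t\in[0,T]$ and all $\psi\in\mathcal{D}$. To upgrade these scalar limits to weak limits of the kinetic functions themselves, I would use the a priori bounds $0\le f\le1$ together with the uniform $(m+1)$-moment bound on $\nu=-\partial_\xi f$ in Definition \ref{dfn-2}(2), which rules out loss of mass as $\xi\to\pm\infty$. For a fixed $t$ and any sequence $r_n\downarrow0$, the family $(f(t+r_n))_n$ is then weak-$*$ relatively compact within the class of kinetic functions (equivalently, the associated Young measures are tight), so along a subsequence it converges weak-$*$ to a kinetic function $f^{t+}$; the limit does not depend on the subsequence since $\langle f(t+r_n),\psi\rangle\to G_\psi^{t+}$ is prescribed for all $\psi\in\mathcal{D}$ and $\mathcal{D}$ is dense in $L^1(\mathbb{T}^N\times\mathbb{R})$. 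Thus $\lim_{r\downarrow0}\langle f(t+r),\varphi\rangle=\langle f^{t+},\varphi\rangle$ for all $\varphi\in C^1_c(\mathbb{T}^N\times\mathbb{R})$, and symmetrically one gets $f^{t-}$. Passing to the limit in the jump identity of the previous step (first over $\mathcal{D}$, then by density) gives $\langle f^{t+}-f^{t-},\varphi\rangle=-\int_{\mathbb{T}^N\times[0,T]\times\mathbb{R}}\partial_\xi\varphi(x,\xi)I_{\{t\}}(s)\,dm(x,s,\xi)$, as claimed.

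For the last assertion, note that $f^{t+}\neq f^{t-}$ forces the right-hand side above to be nonzero for some $\varphi\in C^1_c(\mathbb{T}^N\times\mathbb{R})$, i.e. the finite measure $\mu_t$ on $\mathbb{T}^N\times\mathbb{R}$ given by restricting $m$ to $\mathbb{T}^N\times\{t\}\times\mathbb{R}$ satisfies $\partial_\xi\mu_t\neq0$ as a distribution; since $\partial_\xi\mu_t=0$ together with finiteness of $\mu_t$ would force $\mu_t=0$, this means $m$ charges $\mathbb{T}^N\times\{t\}\times\mathbb{R}$. As $m(\omega)$ is a finite measure for a.e. $\omega$ (Definition \ref{dfn-3}), its pushforward to $[0,T]$ is a finite Borel measure and so has at most countably many atoms; hence $\{t\in[0,T]:f^{t+}\neq f^{t-}\}$ is countable $\mathbb{P}$-a.s.

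The step I expect to be the main obstacle is the passage in the second paragraph from the scalar limits $G_\psi^{t\pm}$ to genuine weak limits of kinetic functions: one must make precise the weak-$*$ compactness of kinetic functions over the non-compact $\xi$-line and the stability of the Young-measure structure under such limits, which is exactly where the $(m+1)$-moment control on $\nu$ enters; a secondary technical point is checking that the stochastic-integral contribution has a time-continuous modification so that it does not create jumps.
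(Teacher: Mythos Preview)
Your proposal is correct and coincides with what the paper does: the paper gives no independent proof of this proposition but simply cites \cite{D-V-1}, and your outline is precisely the Debussche--Vovelle argument (splitting the test function, identifying the distributional $t$-derivative as an $L^1$ part plus a continuous martingale plus a signed measure coming from $m$, then using weak-$*$ compactness of kinetic functions/Young measures with moment bounds to pass from scalar limits to kinetic-function limits). There is nothing to add.
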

For a generalized kinetic solution $f$, define $f^{\pm}$ by $f^{\pm}(t)=f^{t \pm}$, $t\in [0,T]$. Since we are dealing with the filtration associated to Brownian motion, both $f^{\pm}$ are  clearly predictable as well. Also $f=f^+=f^-$ almost everywhere in time and we can take any of them in an integral with respect to the Lebesgue measure or in a stochastic integral. However, if the integral is with respect to a measure--typically a kinetic measure in this article, the integral is not well defined for $f$ and may differ if one chooses either $f^+$ or $f^-$.

As discussed above, with the aid of Proposition \ref{prp-3}, the weak form (\ref{P-22}) satisfied by a generalized kinetic solution can be strengthened to weak only respect to $x$ and $\xi$. Concretely,
 \begin{lemma}\label{lem-1}
 The generalized kinetic solution $f$ satisfying (\ref{P-22})  can be strengthened to for any $t\in [0,T]$ and $\varphi\in C^1_c(\mathbb{T}^N\times [0,T)\times \mathbb{R})$,
 \begin{eqnarray}\notag
&&-\langle f^+(t),\varphi\rangle+\langle f_{0}, \varphi\rangle+\int^t_0\langle f(s), a(\xi)\cdot \nabla \varphi\rangle ds\\
\notag
&=&-\sum_{k\geq 1}\int^t_0\int_{\mathbb{T}^N}\int_{\mathbb{R}}g_k(x,\xi)\varphi(x,\xi)d\nu_{x,s}(\xi)dxd\beta_k(s)\\
\label{qq-17}
&& -\frac{1}{2}\int^t_0\int_{\mathbb{T}^N}\int_{\mathbb{R}}\partial_{\xi}\varphi(x,\xi)G^2(x,\xi)d\nu_{x,s}(\xi)dxds+ \langle m,\partial_{\xi} \varphi\rangle([0,t]), \quad a.s.,
\end{eqnarray}
 where $\nu:=-\partial_{\xi} f$ and $\langle m,\partial_{\xi} \varphi\rangle([0,t])=\int_{\mathbb{T}^N\times[0,t]\times \mathbb{R}}\partial_{\xi}\varphi(x,\xi)dm(x,s,\xi)$.

 \end{lemma}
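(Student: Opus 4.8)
The plan is to obtain (\ref{qq-17}) from the space--time weak formulation (\ref{P-22}) by localizing in the time variable. I would fix $t\in[0,T)$ and $\varphi\in C^1_c(\mathbb{T}^N\times\mathbb{R})$, and test (\ref{P-22}) against the product $\varphi(x,\xi)\psi_{\vep}(s)$, where, for $0<\vep<T-t$, $\psi_{\vep}$ is the piecewise-linear (and, after a harmless mollification, $C^1$) function equal to $1$ on $[0,t]$, equal to $1-\vep^{-1}(s-t)$ on $[t,t+\vep]$ and equal to $0$ on $[t+\vep,T)$; then $\varphi\psi_{\vep}$ is admissible in (\ref{P-22}), $\partial_s(\varphi\psi_{\vep})=\varphi\,\psi_{\vep}'$ with $\psi_{\vep}'=-\vep^{-1}I_{(t,t+\vep)}$, and $\psi_{\vep}\downarrow I_{[0,t]}$ pointwise as $\vep\downarrow0$. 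Sending $\vep\downarrow0$ should then yield (\ref{qq-17}) at the fixed time $t$.

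Next I would pass to the limit term by term in (\ref{P-22}). The time-derivative term equals $\int_0^T\psi_{\vep}'(s)\langle f(s),\varphi\rangle\,ds=-\vep^{-1}\int_t^{t+\vep}\langle f(s),\varphi\rangle\,ds$, which converges to $-\langle f^{t+},\varphi\rangle=-\langle f^+(t),\varphi\rangle$ precisely by the existence of the right weak limit in Proposition~\ref{prp-3}; this single fact accounts both for the pointwise-in-time character of (\ref{qq-17}) and for the appearance of the right limit $f^+(t)$. The initial term reduces to $\psi_{\vep}(0)\langle f_0,\varphi\rangle=\langle f_0,\varphi\rangle$. Each of the remaining terms of (\ref{P-22}), after the test function is inserted, is an $s$-integral of $\psi_{\vep}(s)$ against a fixed integrand which lies in $L^1([0,T])$ (respectively is square-integrable over $\Omega\times[0,T]$, for the stochastic term); the required bounds follow from $0\le f\le1$, the compact $\xi$-support of $\varphi$, the local boundedness of $a^2$, the growth condition (\ref{equ-28}), the moment bound on $\nu$ from Definition~\ref{dfn-2}, and the a.s.\ finiteness of the kinetic measure $m$. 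Dominated convergence --- and the It\^o isometry for the martingale term --- then let me replace $\psi_{\vep}$ by $I_{[0,t]}$ and recover the $[0,t]$-integrals of (\ref{qq-17}), in particular $m\big((\partial_\xi\varphi)\psi_{\vep}\big)\to\langle m,\partial_\xi\varphi\rangle([0,t])$. This gives (\ref{qq-17}) for the chosen $t$ and $\varphi$, almost surely; the endpoint $t=T$ is treated identically with a left-sided cutoff, $f^{T+}$ being read as $f^{T-}$.

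Finally I would upgrade the ``for each fixed $t,\varphi$, a.s.'' conclusion to ``a.s., for all $t$ and all $\varphi$''. For fixed $\varphi$ both sides of (\ref{qq-17}), viewed as processes in $t$, are a.s.\ c\`adl\`ag --- the right-hand side visibly so, being a sum of $t$-continuous terms and the distribution function $t\mapsto\langle m,\partial_\xi\varphi\rangle([0,t])$ of a finite measure, and the left-hand side because $t\mapsto\langle f^+(t),\varphi\rangle$ is c\`adl\`ag by Proposition~\ref{prp-3}. Since (\ref{qq-17}) holds a.s.\ for every $t$ in the countable set $(\mathbb{Q}\cap[0,T])\cup\{T\}$, right-continuity forces it a.s.\ for all $t\in[0,T]$; a density argument in the separable space $C^1_c(\mathbb{T}^N\times\mathbb{R})$, using that every term is continuous in $\varphi$ with respect to $C^1$-convergence on a fixed compact set, then extends it a.s.\ to all $\varphi$. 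I expect the only genuine difficulty to be the first limit above: it is Proposition~\ref{prp-3} that makes the averages $\vep^{-1}\int_t^{t+\vep}\langle f(s),\varphi\rangle\,ds$ converge for \emph{every} $t$, and to the correct one-sided limit, rather than merely along a subsequence or for almost every $t$; the rest is routine, the only remaining care being to merge the various exceptional null sets (over rational $t$, over a countable dense family of $\varphi$, and from the $L^2$-convergence of the stochastic integral) into one.
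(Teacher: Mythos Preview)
Your proposal is correct and follows the same basic strategy as the paper: localize in time with the piecewise-linear cutoff $\psi_{\vep}$ (the paper calls it $\alpha$), plug into (\ref{P-22}), and send the parameter to zero. The differences are purely in execution. The paper does not mollify $\psi_{\vep}$; instead it approximates $\alpha$ uniformly by polynomials $\alpha_n\in C^1$, observes that the countability of $\{\alpha_n\}$ gives a common full-measure set, and then --- rather than invoking Proposition~\ref{prp-3} directly --- infers from (\ref{P-22}) that $t\mapsto\langle f(t),\varphi\rangle-J_\varphi(t)$ is $BV$, which lets it pass from $\alpha_n$ to $\alpha$ by integration by parts before letting the cutoff width $r\to0$. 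Your route is shorter: you make the cutoff $C^1$ by mollification and identify the limit of the time-derivative term directly via Proposition~\ref{prp-3}. You are also more careful than the paper about merging null sets over all $t$ and all $\varphi$ (c\`adl\`ag plus separability), which the paper leaves implicit.
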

 \begin{proof}
 For all $t\in [0,T]$, consider a function $\alpha$ defined by
\begin{eqnarray}\label{qq-33}
\alpha(s)=\left\{
            \begin{array}{ll}
              1, & s\leq t,\\
              1-\frac{s-t}{r}, & t\leq s\leq t+r, \\
              0, & t+r\leq s,
            \end{array}
          \right.
\end{eqnarray}
 Then $\alpha\in C([0,T];\mathbb{R})=:C([0,T])$. Clearly, $C([0,T])$ with the metric of uniform convergence is a complete separable space and we know from Weierstrass approximation theorem that polynomials in one variable with rational coefficients denoted by $\mathbb{Q}[s]$ is a countable dense subset of $C([0,T])$. Therefore, we can take a sequence $\{\alpha_n\}_{n\geq 1}\subset \mathbb{Q}[s]$ such that
 \begin{eqnarray} \label{qq-26}
 \lim_{n\rightarrow \infty}\sup_{s\in [0,T]}|\alpha_n(s)-\alpha(s)|=0.
 \end{eqnarray}
 In view of $\alpha_n\in C^1_c([0,T])$, then  (\ref{P-22})  holds for a test function of the form $(x,s,\xi)\rightarrow \varphi (x,\xi)\alpha_n(s)$, where $\varphi\in C^1_c(\mathbb{T}^N\times \mathbb{R})$. Since $\{\alpha_n\}_{n\geq 1}$ is countable, we can find a common $\mathbb{P}$ full measure set $\tilde{\Omega}\subset \Omega$ such that  (\ref{P-22})  holds for all $\alpha_n$.

 Now, we focus on taking limitation $n\rightarrow \infty$ of (\ref{P-22}) for any fixed $\omega$ in $\tilde{\Omega}$. In the following, we fix $\omega\in\tilde{\Omega}$.
For all $\varphi\in C^1_c(\mathbb{T}^N\times \mathbb{R})$, the map
\begin{eqnarray*}
 J_{\varphi}: t&\longmapsto&\int^t_0\langle f(s),a(\xi)\cdot \nabla \varphi \rangle ds+\sum_{k\geq 1}\int^t_0\int_{\mathbb{T}^N}\int_{\mathbb{R}}g_k(x,\xi)\varphi(x,\xi)d\nu_{x,s}(\xi)dxd\beta_k(s)\\
 &&+\frac{1}{2}\int^t_0\int_{\mathbb{T}^N}\int_{\mathbb{R}}\partial_{\xi}\varphi(x,\xi)G^2(x,\xi)d\nu_{x,s}(\xi)dxds
\end{eqnarray*}
 is continuous. By Fubini theorem, the weak formulation (\ref{P-22}) for $\alpha_n$ is equivalent to
 \begin{eqnarray*}
 \int^T_0g(t)\alpha'_n(t)dt+\langle f_0, \varphi\rangle \alpha_n(0)=\langle m, \partial_{\xi}\varphi \rangle (\alpha_n),
\end{eqnarray*}
 where $g(t):=\langle f(t), \varphi\rangle-J_{\varphi}(t)$. Taking into account the fact that both $\langle f_0, \varphi\rangle \alpha_n(0)$ and $\langle m, \partial_{\xi}\varphi \rangle (\alpha_n)$ are finite, we obtain that $\partial_t g(t)$ is a Radon measure on $(0,T)$, i.e., the function $g(t)\in BV(0,T)$. Hence, by (\ref{qq-26}), it yields $\int^T_0\partial_tg(t)\alpha_n(t)dt\rightarrow\int^T_0\partial_tg(t)\alpha(t)dt$, as $n\rightarrow \infty$, which implies
\begin{eqnarray*}
 \int^T_0g(t)\alpha'_n(t)dt&=&g(T)\alpha_n(T)-g(0)\alpha_n(0)-\int^T_0\partial_tg(t)\alpha_n(t)dt\\
 &\rightarrow& g(T)\alpha(T)-g(0)\alpha(0)-\int^T_0\partial_tg(t)\alpha(t)dt=\int^T_0g(t)\alpha'(t)dt.
\end{eqnarray*}
 Since $\langle f_0, \varphi\rangle<\infty $, by (\ref{qq-26}), we have $\langle f_0, \varphi\rangle \alpha_n(0)\rightarrow \langle f_0, \varphi\rangle \alpha(0)$. Moreover, by (\ref{equ-37}), it yields $\langle m, \partial_{\xi}\varphi \rangle<\infty$, which implies
 $\langle m, \partial_{\xi}\varphi \rangle (\alpha_n)\rightarrow\langle m, \partial_{\xi}\varphi \rangle (\alpha)$. Thus, we have
 \begin{eqnarray*}
 \int^T_0g(t)\alpha'(t)dt+\langle f_0, \varphi\rangle \alpha(0)=\langle m, \partial_{\xi}\varphi \rangle (\alpha),
\end{eqnarray*}
 which means that (\ref{P-22})  holds for test functions of the form $(x,s,\xi)\rightarrow \varphi (x,\xi)\alpha(s)$, where $\varphi\in C^1_c(\mathbb{T}^N\times \mathbb{R})$ and $\alpha$ is defined by (\ref{qq-33}). By simple calculation, letting $r\rightarrow 0$, we derive that (\ref{qq-17}) holds.

\end{proof}

{\color{rr}As stated in the introduction, the starting point of this paper is the equivalence between entropy solution and kinetic solution. Now, we give a brief proof.
\begin{prp}\label{prp-6}
Let $u_0\in L^{m+1}(\mathbb{T}^N)$. For a kinetic solution to (\ref{P-19-1}) in the sense of Definition \ref{dfn-1} is equivalent to be an entropy solution $u$ to (\ref{P-19-1}) in the sense of Definition \ref{dfn-4}.
\end{prp}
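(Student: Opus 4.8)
The plan is to show the two formulations are equivalent by passing between the "microscopic" kinetic identity satisfied by $f = I_{u>\xi}$ and the "macroscopic" entropy inequalities obtained by integrating against an entropy density $\eta$. Throughout I would fix a kinetic solution $u$ with kinetic measure $m$, set $f = I_{u>\xi}$, and note that Definition \ref{dfn-1}(2) already gives $u \in L^{m+1}(\Omega\times[0,T];L^{m+1}(\mathbb{T}^N))$, which is exactly (i) of Definition \ref{dfn-4}. Conversely, given an entropy solution $u$, I would construct the kinetic measure directly from the entropy defect: testing (\ref{e-2}) with a suitable family of convex $\eta$ and non-negative $\phi$, the "missing" term between the two sides of the inequality defines, via the Riesz representation theorem, a non-negative measure $m$ on $\mathbb{T}^N\times[0,T]\times\mathbb{R}$; one then checks it is a kinetic measure in the sense of Definition \ref{dfn-3} (measurability, predictability of the partial-in-time integral, and vanishing for large $\xi$, the latter using the $L^{m+1}$ bound on $u$).

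\textbf{From kinetic to entropy.} Starting from the kinetic identity in Definition \ref{dfn-1}(3), I would choose test functions of the form $\varphi(x,t,\xi) = \phi(x,t)\,\psi_\eta(\xi)$ and then let $\psi_\eta \to \eta'$ in an appropriate sense, where $\eta\in C^2(\mathbb{R})$ is convex with $\eta''$ compactly supported. The chain rule $q'_\eta(\xi) = \eta'(\xi)a^2(\xi)$ converts the $\langle f, a^2(\xi)\Delta\varphi\rangle$ term into $\int q_\eta(u)\Delta\phi$; the $\partial_\xi\varphi$ terms produce the $\eta''$-integrands $\tfrac12\phi\eta''(u)G^2$ and $-\phi\eta''(u)|\nabla\Psi(u)|^2$ (using that $\nu_{x,t} = \delta_{u(x,t)}$ for an equilibrium); the stochastic term becomes $\sum_k \int \phi\eta'(u)g^k\,d\beta_k$; and the kinetic measure term $m(\partial_\xi\varphi) = \int \phi\,\eta''(\xi)\,dm \geq 0$ since $m\geq 0$ and $\eta$ is convex — dropping this non-negative term yields precisely the inequality (\ref{e-2}). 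I would also need to verify property (ii) of Definition \ref{dfn-4}, i.e. $\Psi_f(u)\in L^2(\Omega\times[0,T];H^1)$ with $\partial_i\Psi_f(u) = f(u)\partial_i\Psi(u)$; this is a consequence of the finiteness of the dissipation measure $\int \partial_\xi\varphi\,|\nabla\Psi|^2 d\nu$ appearing on the right-hand side of the kinetic equation, combined with the regularity of $\Psi$ built into Hypothesis H, and it can be quoted or adapted from the regularity statements in \cite{D-V-1} and \cite{DGG}.

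\textbf{From entropy to kinetic.} For the converse, after extracting the measure $m$ from the entropy defect as above, I would recover the kinetic identity by differentiating in the entropy variable: for fixed $\xi_0$, the kinetic equation is formally "$\partial_\eta$ at $\eta = (\cdot-\xi_0)_+$" of the entropy balance. Rigorously, I would take a smooth approximation $\eta_\delta$ of $\xi\mapsto(\xi-\xi_0)_+$ (so $\eta_\delta'' \to \delta_{\xi_0}$), write (\ref{e-2}) as an equality by inserting $m$, test against $\phi(x,t)$, and then integrate against a test function in $\xi_0$; passing $\delta\to 0$ and using $\partial_{\xi_0} I_{u>\xi_0} = -\delta_{u}$ in the distributional sense recovers (\ref{P-21}) with the $\delta_{u}$'s replacing the measures $d\nu_{x,t}$. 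The main obstacle, and where I would spend the most care, is the low regularity of everything in the $\xi$-variable: justifying the limits $\psi_\eta\to\eta'$ and $\eta_\delta''\to\delta_{\xi_0}$ requires uniform control on $m$ near $|\xi| = \infty$ (which is why $m$ vanishes for large $\xi$ is in the definition) and enough integrability of $|\nabla\Psi(u)|^2$ and $G^2(x,u)$ against $d\nu$ — here the explicit growth bounds $|a(0)|\le K$, $|a'(r)|\le K|r|^{(m-3)/2}$ and $G(x,u)\le K(1+|u|)$ from Hypothesis H, together with the $L^{m+1}$ a priori bound, are exactly what is needed to pass these limits and to identify the stochastic integral terms. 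Since the equivalence of kinetic and entropy formulations is, modulo these bounds, classical (it is the content of the kinetic-formulation theory of \cite{L-P-T}, \cite{CP}, \cite{D-V-1}), I would present the argument concisely, emphasizing only the points where the porous-medium nonlinearity and the Nemytskii noise coefficient require the specific estimates of Hypothesis H.
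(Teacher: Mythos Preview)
Your proposal is correct and follows essentially the same strategy as the paper for the kinetic-to-entropy direction: plug $\varphi(x,t,\xi)=\phi(x,t)\eta'(\xi)$ into the kinetic identity, use the chain rule $q_\eta'=\eta' a^2$ and the equilibrium relation $\nu_{x,t}=\delta_{u(x,t)}$, and drop the non-negative term $m(\phi\eta'')$. (The paper plugs in $\eta'$ directly without your intermediate approximation $\psi_\eta\to\eta'$, but since $\eta'$ is not compactly supported in $\xi$ a truncation is implicitly needed, so your extra care is justified.) You are also more thorough than the paper in flagging that (ii) of Definition~\ref{dfn-4} and the kinetic-measure properties of Definition~\ref{dfn-3} must be checked; the paper simply does not address these.

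For the entropy-to-kinetic direction your route differs: you propose approximating Kruzhkov entropies $\eta_\delta\approx(\cdot-\xi_0)_+$ so that $\eta_\delta''\to\delta_{\xi_0}$, recovering the kinetic equation pointwise in $\xi_0$ after a limit. The paper instead avoids any limit by taking, for a given test function $\varsigma(\xi)$, the primitive $\eta(\xi)=\int_{-\infty}^\xi\varsigma$; then $\eta'=\varsigma$, and the entropy balance (rewritten as an equality with the defect measure $m$) \emph{is} the kinetic equation tested against $\phi(x,t)\varsigma(\xi)$, after which density of tensor products gives the full statement. Both work, but the paper's primitive trick is shorter and sidesteps the delicate $\delta\to 0$ passage you anticipate as ``the main obstacle.''
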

\begin{proof} Let us begin with the proof of a kinetic solution is an entropy solution. To achieve it, we
choose test functions $\varphi(x,t,\xi)=\phi(x,t)\eta'(\xi)$, where the non-negative function $\phi(x,t)\in C^{1}_c(\mathbb{T}^N\times [0,T))$ and the convex function $\eta\in C^2(\mathbb{R})$ with $\eta''>0$ compactly supported. Assume $u(x,t)$ is a kinetic solution to (\ref{P-19-1}), then the corresponding kinetic functions can be written as $f=I_{u(x,t)>\xi}, f_0=I_{u_0>\xi}$. From (\ref{P-21}), we deduce that
\begin{eqnarray}\notag
&&\int^T_0\int_{\mathbb{T}^N}\int_{\mathbb{R}}I_{u(x,t)>\xi}\eta'(\xi) \partial_t \phi(x,t)d\xi dx dt+\int_{\mathbb{T}^N}\int_{\mathbb{R}}I_{u_0>\xi} \eta'(\xi) \phi(0,x)d\xi dx\\ \notag
&=& -\int^T_0\int_{\mathbb{T}^N}\int_{\mathbb{R}}I_{u(x,t)>\xi} a^2(\xi)\eta'(\xi) \Delta \phi (t)d\xi dx dt\\ \notag
&& +\int^T_0\int_{\mathbb{T}^N}\phi (x,t,u(x,t))\eta''(u(x,t))|\nabla \Psi(u)|^2dxdt\\ \notag
&& -\sum_{k\geq 1}\int^T_0\int_{\mathbb{T}^N} g^k(x, u(x,t))\phi (x,t,u(x,t))\eta'(u(x,t))dxd\beta_k(t) \\
\label{P-21-1}
&& -\frac{1}{2}\sum_{k\geq 1}\int^T_0\int_{\mathbb{T}^N}\phi (x,t,u(x,t))\eta''(u(x,t))G^2(x,u(t,x))dxdt+ m(\phi(x,t)\eta''(u(x,t))).
\end{eqnarray}

In view of $\phi\in C^1_c(\mathbb{T}^N\times[0,T))$, we deduce that
\begin{eqnarray}\notag
\int^T_0\int_{\mathbb{T}^N}\int_{\mathbb{R}}I_{u(x,t)>\xi}\eta'(\xi) \partial_t \phi(x,t)d\xi dx dt
&=&\int^T_0\int_{\mathbb{T}^N}(\eta(u(x,t))-\eta(-\infty)) \partial_t \phi(x,t) dx dt\\
\label{eq-1}
&=&\int^T_0\int_{\mathbb{T}^N}\eta(u(x,t)) \partial_t \phi(x,t) dx dt.
\end{eqnarray}
Similarly, it yields
\begin{eqnarray}\label{eq-2}
\int_{\mathbb{T}^N}\int_{\mathbb{R}}I_{u_0>\xi} \eta'(\xi) \phi(0,x)d\xi dx
=\int_{\mathbb{T}^N} \eta(u_0) \phi(0,x) dx.
\end{eqnarray}
Taking into account that $q'_{\eta}(\xi)=a^2(\xi)\eta'(\xi)$ and $\phi\in C^1_c(\mathbb{T}^N\times[0,T))$, we arrive at
\begin{eqnarray}\notag
&&\int^T_0\int_{\mathbb{T}^N}\int_{\mathbb{R}}I_{u(x,t)>\xi} a^2(\xi)\eta'(\xi) \Delta \phi (x,t)d\xi dx dt\\
\notag
&=& \int^T_0\int_{\mathbb{T}^N}\int_{\mathbb{R}}I_{u(x,t)>\xi} q'_{\eta}(\xi) \Delta \phi (x,t)d\xi dx dt\\ \notag
&=& \int^T_0\int_{\mathbb{T}^N}(q_{\eta}(u(x,t))-q_{\eta}(-\infty))\Delta \phi (x,t) dx dt\\
\label{eq-3}
&=& \int^T_0\int_{\mathbb{T}^N}q_{\eta}(u(x,t))\Delta \phi (x,t) dx dt.
\end{eqnarray}
Based on (\ref{eq-1})-(\ref{eq-3}) and by $m(\phi(x,t)\eta''(u(x,t)))\geq 0$, it follows that $u$ satisfies (\ref{e-2}).

Conversely, we suppose $u(x,t)$ is an entropy solution to
(\ref{P-19-1}) satisfying (\ref{e-2}). Then for any non-negative $\phi(x,t)\in C^1_c(\mathbb{T}^N\times[0,T))$ and any convex function $\eta\in C^2(\mathbb{R})$ with $\eta''>0$ compactly supported, we define a measure $m$ as follows:
\begin{eqnarray*}
m(\phi\otimes \eta'')&:=& \int^T_0\int_{\mathbb{T}^N}\eta(u)\partial_t \phi dxdt+\int_{\mathbb{T}^N}\eta(u_0)\phi(0)dx+\int^T_0\int_{\mathbb{T}^N}q_{\eta}(u)\Delta \phi dxdt\\
&&+\int^T_0\int_{\mathbb{T}^N}\Big(\frac{1}{2}\phi\eta''(u)G^2(x,u)-\phi\eta''(u)|\nabla \Psi(u)|^2\Big)dxdt\\
&& +\sum_{k\geq1}\int^T_0\int_{\mathbb{T}^N}\phi\eta'(u)g^k(x,u)dxd\beta_k(t)\geq 0.
\end{eqnarray*}
Taking $\eta(\xi)=\int^{\xi}_{-\infty}\varsigma$, by
utilizing (\ref{eq-1})-(\ref{eq-3}), we conclude that (\ref{P-21}) holds for $\varphi(x,t,\xi)=\phi (x,t)\varsigma(\xi)$. Since the test functions $\varphi(x,t,\xi)=\phi (x,t)\varsigma(\xi)$ form a dense subset of $C^1_c(\mathbb{T}^N\times[0,T)\times \mathbb{R})$, we get (\ref{P-21}) holds for any $\varphi\in C^1_c(\mathbb{T}^N\times[0,T)\times \mathbb{R})$.

\end{proof}
}

On the basis of Proposition \ref{prp-6}, we deduce from Theorem \ref{th-1} that
\begin{thm}\label{thm-4}
(Existence, Uniqueness) Let $u_0\in L^{m+1}(\mathbb{T}^N)$. Assume Hypothesis H holds. Then there is a unique kinetic solution $u$ to equation (\ref{P-19-1}) with initial datum $u_0$.
\end{thm}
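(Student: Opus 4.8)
The plan is to obtain Theorem~\ref{thm-4} as a direct consequence of the two facts already in place: the global well-posedness of entropy solutions (Theorem~\ref{th-1}, borrowed from \cite{DGG}) together with the equivalence between entropy and kinetic solutions (Proposition~\ref{prp-6}). Beyond a routine a priori bound on the kinetic measure, no new hard analysis is needed.

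For \emph{existence}, I would start from the unique entropy solution $u$ of $\mathcal{E}(A,g,u_0)$ supplied by Theorem~\ref{th-1}. By Definition~\ref{dfn-4}, $u$ is predictable and belongs to $L^{m+1}(\Omega\times[0,T];L^{m+1}(\mathbb{T}^N))$, so conditions~1 and~2 of Definition~\ref{dfn-1} hold with $C_m:=\mathbb{E}\int_0^T\|u(t)\|^{m+1}_{L^{m+1}(\mathbb{T}^N)}\,dt$; moreover $\Psi(u)=\Psi_1(u)\in L^2(\Omega\times[0,T];H^1(\mathbb{T}^N))$ by part~(ii) of Definition~\ref{dfn-4}, so the dissipation term in~(\ref{P-21}) is well defined. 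It then remains to exhibit a kinetic measure $m$ for which $f:=I_{u>\xi}$ satisfies~(\ref{P-21}); this is precisely the second half of the proof of Proposition~\ref{prp-6}, where $m$ is read off from the entropy inequality~(\ref{e-2}) tested against $\varphi(x,t,\xi)=\phi(x,t)\eta'(\xi)$ after rewriting the $\eta$-terms via~(\ref{eq-1})--(\ref{eq-3}), and the density of such test functions in $C^1_c(\mathbb{T}^N\times[0,T)\times\mathbb{R})$ is used to pass to general $\varphi$.

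The step requiring genuine work — and, in my view, the only real obstacle — is to verify that the object $m$ so produced is a kinetic measure in the sense of Definition~\ref{dfn-3}, rather than merely a distribution. Non-negativity is immediate upon choosing convex $\eta$ in~(\ref{e-2}); predictability of $(\omega,t)\mapsto m(\mathbb{T}^N\times[0,t]\times\mathbb{R})$ follows because all the remaining terms in~(\ref{e-2}) are adapted and continuous in $t$. Finiteness of $\mathbb{E}[m(\mathbb{T}^N\times[0,T]\times\mathbb{R})]$ and the decay $\lim_{R\to\infty}\mathbb{E}[m(\mathbb{T}^N\times[0,T]\times B^c_R)]=0$ I would obtain by the standard argument of \cite{D-V-1,DGG}: apply It\^o's formula (equivalently, use~(\ref{e-2}) with $\phi\equiv 1$) with entropies $\eta_R\in C^2(\mathbb{R})$ that grow like $|\xi|^{m+1}$ and are flat on $[-R,R]$, invoke the growth bound~(\ref{equ-28}) on $G$ together with $u\in L^{m+1}(\Omega\times[0,T]\times\mathbb{T}^N)$, and conclude that $\mathbb{E}[m(\mathbb{T}^N\times[0,T]\times B^c_R)]$ is controlled by $\mathbb{E}\int_0^T\int_{\mathbb{T}^N}(1+|u|^{m+1})\,I_{|u|\ge R/2}\,dx\,dt$, which tends to $0$ as $R\to\infty$ by dominated convergence. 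This also closes the point tacitly left open in the brief proof of Proposition~\ref{prp-6}, namely that the constructed $m$ vanishes for large $\xi$.

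For \emph{uniqueness}, suppose $u_1$ and $u_2$ are two kinetic solutions of~(\ref{P-19-1}) with the same initial datum $u_0$. By the converse implication in Proposition~\ref{prp-6}, each $u_i$ is an entropy solution of $\mathcal{E}(A,g,u_0)$, hence the $L^1$-contraction estimate~(\ref{e-3}) of Theorem~\ref{th-1} applied with $\tilde u=u_2$ and $\tilde u_0=u_0$ gives $\mathop{\mathrm{ess\,sup}}_{0\le t\le T}\mathbb{E}\|u_1(t)-u_2(t)\|_{L^1(\mathbb{T}^N)}\le\mathbb{E}\|u_0-u_0\|_{L^1(\mathbb{T}^N)}=0$, whence $u_1=u_2$ for a.e.\ $(\omega,t,x)$. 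Combined with the existence part, this proves Theorem~\ref{thm-4}.
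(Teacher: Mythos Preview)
Your proposal is correct and follows essentially the same approach as the paper: the paper's proof of Theorem~\ref{thm-4} consists of a single sentence invoking Proposition~\ref{prp-6} together with Theorem~\ref{th-1}. Your write-up is in fact more thorough, since you spell out the verification that the measure $m$ constructed in Proposition~\ref{prp-6} satisfies the decay and predictability requirements of Definition~\ref{dfn-3}, a point the paper leaves implicit.
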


\section{Freidlin-Wentzell large deviations and statement of the main result}
We start with a brief account of notions of large deviations.
Let $\{X^\varepsilon\}_{\varepsilon>0}$ be a family of random variables defined on a given probability space $(\Omega, \mathcal{F}, \mathbb{P})$ taking values in some Polish space $\mathcal{E}$.
\begin{dfn}
(Rate function) A function $I: \mathcal{E}\rightarrow [0,\infty]$ is called a rate function if $I$ is lower semicontinuous. A rate function $I$ is called a good rate function if the level set $\{x\in \mathcal{E}: I(x)\leq M\}$ is compact for each $M<\infty$.
\end{dfn}
\begin{dfn}
(Large deviation principle) The sequence $\{X^\varepsilon\}$ is said to satisfy the large deviation principle with rate function $I$ if for each Borel subset $A$ of $\mathcal{E}$
      \[
      -\inf_{x\in A^o}I(x)\leq \lim \inf_{\varepsilon\rightarrow 0}\varepsilon \log \mathbb{P}(X^\varepsilon\in A)\leq \lim \sup_{\varepsilon\rightarrow 0}\varepsilon \log \mathbb{P}(X^\varepsilon\in A)\leq -\inf_{x\in \bar{A}}I(x),
      \]
      where $A^o$ and $\bar{A}$ denote the interior and closure of $A$ in $\mathcal{E}$, respectively.
\end{dfn}

Suppose $W(t)$ is a cylindrical Wiener process on a Hilbert space $U$ defined on a filtered probability space $(\Omega, \mathcal{F},\{\mathcal{F}_t\}_{t\in [0,T]}, \mathbb{P} )$ ( that is, the paths of $W$ take values in $C([0,T];\mathcal{U})$, where $\mathcal{U}$ is another Hilbert space such that the embedding $U\subset \mathcal{U}$ is Hilbert-Schmidt).
Now we define
\begin{eqnarray*}
&\mathcal{A}:=\{\phi: \phi\ is\ a\ U\text{-}valued\ \{\mathcal{F}_t\}\text{-}predictable\ process\ such\ that \ \int^T_0 |\phi(s)|^2_Uds<\infty\ \mathbb{P}\text{-}a.s.\};\\
&S_M:=\{ h\in L^2([0,T];U): \int^T_0 |h(s)|^2_Uds\leq M\};\\
&\mathcal{A}_M:=\{\phi\in \mathcal{A}: \phi(\omega)\in S_M,\ \mathbb{P}\text{-}a.s.\}.
\end{eqnarray*}
Here and in the sequel of this paper, we will always refer to the weak topology on the set $S_M$.

Suppose for each $\varepsilon>0, \mathcal{G}^{\varepsilon}: C([0,T];\mathcal{U})\rightarrow \mathcal{E}$ is a measurable map and let $X^{\varepsilon}:=\mathcal{G}^{\varepsilon}(W)$. Now, we list below sufficient conditions for the large deviation principle of the sequence $X^{\varepsilon}$ as $\varepsilon\rightarrow 0$.
\begin{description}
  \item[\textbf{Condition A} ] There exists a measurable map $\mathcal{G}^0: C([0,T];\mathcal{U})\rightarrow \mathcal{E}$ such that the following conditions hold
\end{description}
\begin{description}
  \item[(a)] For every $M<\infty$, let $\{h^{\varepsilon}: \varepsilon>0\}$ $\subset \mathcal{A}_M$. If $h_{\varepsilon}$ converges to $h$ as $S_M$-valued random elements in distribution, then $\mathcal{G}^{\varepsilon}(W(\cdot)+\frac{1}{\sqrt{\varepsilon}}\int^{\cdot}_{0}h^\varepsilon(s)ds)$ converges in distribution to $\mathcal{G}^0(\int^{\cdot}_{0}h(s)ds)$.
  \item[(b)] For every $M<\infty$, the set $K_M=\{\mathcal{G}^0(\int^{\cdot}_{0}h(s)ds): h\in S_M\}$ is a compact subset of $\mathcal{E}$.
\end{description}

The following result is due to Budhiraja et al. in \cite{BD}.
\begin{thm}\label{thm-7}
If $\{\mathcal{G}^{\varepsilon}\}$ satisfies {condition A}, then $X^{\varepsilon}$ satisfies the large deviation principle on $\mathcal{E}$ with the
following good rate function $I$ defined by
\begin{eqnarray}\label{equ-27-1}
I(f)=\inf_{\{h\in L^2([0,T];U): f= \mathcal{G}^0(\int^{\cdot}_{0}h(s)ds)\}}\Big\{\frac{1}{2}\int^T_0|h(s)|^2_{U}ds\Big\},\ \ \forall f\in\mathcal{E}.
\end{eqnarray}
By convention, $I(f)=\infty$, if  $\Big\{h\in L^2([0,T];U): f= \mathcal{G}^0(\int^{\cdot}_{0}h(s)ds)\Big\}=\emptyset.$
\end{thm}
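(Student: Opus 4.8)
The plan is to obtain the large deviation principle from the equivalent Laplace principle, which itself follows from the Bou\'e--Dupuis variational representation for exponential functionals of a cylindrical Brownian motion; this is precisely the line of argument of \cite{MP,BD}, so I only record the main steps. The starting point is that representation: for every bounded Borel measurable $F:\mathcal{E}\to\mathbb{R}$ and every $\varepsilon>0$, applying \cite{MP,BD} to $F\circ\mathcal{G}^\varepsilon$ yields
\[
-\varepsilon\log\mathbb{E}\Big[\exp\Big(-\tfrac{1}{\varepsilon}F(X^\varepsilon)\Big)\Big]
=\inf_{v\in\mathcal{A}}\mathbb{E}\Big[\tfrac{1}{2}\int_0^T|v(s)|_U^2\,ds+F\Big(\mathcal{G}^\varepsilon\big(W(\cdot)+\tfrac{1}{\sqrt{\varepsilon}}\int_0^\cdot v(s)\,ds\big)\Big)\Big],
\]
and the goal is to show that the right-hand side converges, as $\varepsilon\to0$, to $\inf_{f\in\mathcal{E}}\{F(f)+I(f)\}$ with $I$ as in (\ref{equ-27-1}).

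First I would check that $I$ is a good rate function. If $f_n\to f$ in $\mathcal{E}$ with $\sup_n I(f_n)<\infty$, I choose near-optimal representatives $h_n\in L^2([0,T];U)$ with $f_n=\mathcal{G}^0(\int_0^\cdot h_n\,ds)$ and $\tfrac12\int_0^T|h_n|_U^2\,ds\le I(f_n)+\tfrac1n$; the $h_n$ are then bounded in $L^2$, so a subsequence converges weakly to some $h$, the weak lower semicontinuity of $\|\cdot\|_{L^2}$ gives $\tfrac12\int_0^T|h|_U^2\,ds\le\liminf_n I(f_n)$, and the convergence properties built into Condition~A identify $f=\mathcal{G}^0(\int_0^\cdot h\,ds)$, whence $I(f)\le\liminf_n I(f_n)$. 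The level set $\{f:I(f)\le M\}$ is then closed and, being contained in the set $K_{2M}$ which is compact by Condition~A(b), is itself compact.

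Next I would establish the two halves of the Laplace principle for bounded continuous $F$. For the upper bound, fix $\delta>0$ and choose $h\in L^2([0,T];U)$ with $F(\mathcal{G}^0(\int_0^\cdot h\,ds))+\tfrac12\int_0^T|h|_U^2\,ds\le\inf_{f}\{F(f)+I(f)\}+\delta$; inserting the deterministic control $v\equiv h$ into the representation and letting $\varepsilon\to0$, Condition~A(a) (with $h^\varepsilon\equiv h$) gives $\mathcal{G}^\varepsilon(W+\tfrac{1}{\sqrt{\varepsilon}}\int_0^\cdot h\,ds)\to\mathcal{G}^0(\int_0^\cdot h\,ds)$ in distribution, and boundedness and continuity of $F$ yield $\limsup_{\varepsilon\to0}\big(-\varepsilon\log\mathbb{E}[\exp(-F(X^\varepsilon)/\varepsilon)]\big)\le\inf_{f}\{F(f)+I(f)\}+\delta$; sending $\delta\to0$ completes it. For the lower bound, for each $\varepsilon$ I take an $\varepsilon$-optimal $v^\varepsilon\in\mathcal{A}$ in the representation; using the boundedness of $F$, a stopping-time truncation shows that the $v^\varepsilon$ may be taken in $\mathcal{A}_M$ for a fixed $M$, so that along a subsequence $v^\varepsilon\to v$ in distribution in $S_M$ with the weak topology, and then Condition~A(a) gives, jointly in distribution, $\big(v^\varepsilon,\mathcal{G}^\varepsilon(W+\tfrac{1}{\sqrt{\varepsilon}}\int_0^\cdot v^\varepsilon\,ds)\big)\to\big(v,\mathcal{G}^0(\int_0^\cdot v\,ds)\big)$; invoking the Skorokhod representation theorem, Fatou's lemma and the weak lower semicontinuity of $\|\cdot\|_{L^2}$, I obtain $\liminf_{\varepsilon\to0}\big(-\varepsilon\log\mathbb{E}[\exp(-F(X^\varepsilon)/\varepsilon)]\big)\ge\mathbb{E}\big[\tfrac12\int_0^T|v(s)|_U^2\,ds+F(\mathcal{G}^0(\int_0^\cdot v\,ds))\big]\ge\inf_{f}\{F(f)+I(f)\}$.

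Finally, the Laplace principle together with the fact that $I$ is a good rate function is equivalent to the large deviation principle with rate function $I$ (see \cite{DE}), which completes the proof. I expect the main obstacle to be the Laplace lower bound: a priori the infimum in the variational representation may be approached by controls whose $L^2$-energy is only bounded in expectation, not pathwise, so one must first localize, exploiting the boundedness of $F$, in order to bring the near-optimal controls into a fixed ball $\mathcal{A}_M$; only afterwards can weak compactness of $S_M$, Condition~A(a), and the Skorokhod representation be used to pass to the limit inside the expectation.
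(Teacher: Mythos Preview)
Your sketch is correct and follows the standard weak convergence argument of Budhiraja--Dupuis \cite{BD}: the Bou\'e--Dupuis variational formula, verification that $I$ is a good rate function via Condition~A(b), and the Laplace upper and lower bounds via Condition~A(a). Note, however, that the paper itself does not prove this theorem at all; it simply quotes it as a known result (``The following result is due to Budhiraja et al.\ in \cite{BD}''), so there is no proof in the paper to compare against---your outline is essentially the argument of the cited reference.
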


Recently, a new sufficient condition (Condition B below) to verify the assumptions in {condition A} (hence the large deviation principle) is proposed by Matoussi, Sabagh and Zhang in \cite{MSZ}. It turns out this new sufficient condition is suitable for establishing the large deviation principle for the stochastic porous media equation.
\begin{description}
  \item[\textbf{Condition B} ] There exists a measurable map $\mathcal{G}^0: C([0,T];\mathcal{U})\rightarrow \mathcal{E}$ such that the following two items hold
\end{description}
\begin{description}
  \item[(i)] For every $M<+\infty$, and for any family $\{h^{\varepsilon}; \varepsilon>0\}$ $\subset \mathcal{A}_M$ and any $\delta>0$,
      \[
      \lim_{\varepsilon\rightarrow 0}\mathbb{P}\Big(\rho(Y^\varepsilon, Z^\varepsilon)>\delta\Big)=0,
      \]
     where $Y^\varepsilon:=\mathcal{G}^{\varepsilon}\left(W(\cdot)+\frac{1}{\sqrt{\varepsilon}}\int^{\cdot}_{0}h^\varepsilon(s)ds\right)$, $Z^\varepsilon:=\mathcal{G}^0\left(\int^{\cdot}_{0}h^\varepsilon(s)ds\right)$,
     and $\rho(\cdot,\cdot)$ stands for the metric in the space $\mathcal{E}$.
  \item[(ii)] For every $M<+\infty$ and any family $\{h^\varepsilon; \varepsilon>0\}\subset S_M$ that converges to some element $h$ as $\varepsilon\rightarrow 0$,
      $\mathcal{G}^0\left(\int^{\cdot}_{0}h^\varepsilon(s)ds\right)$ converges to $\mathcal{G}^0\left(\int^{\cdot}_{0}h(s)ds\right)$ in the space $\mathcal{E}$.
\end{description}

\subsection{Statement of the main result}
In this paper, we are concerned with the following SPDE driven by small multiplicative noise
\begin{eqnarray}\label{P-1}
\left\{
  \begin{array}{ll}
  du^\varepsilon(x,t)=\Delta (A(u^\varepsilon))dt+\sqrt{\varepsilon}\sum_{k\geq 1}g^k(x,u^\varepsilon(t,x)) d\beta_k(t),\\
u^\varepsilon(0)=u_0,
  \end{array}
\right.
\end{eqnarray}
for $\varepsilon>0$, where $u_0\in L^{m+1}(\mathbb{T}^N)$. Under Hypothesis H, by Theorem \ref{thm-4}, there exists a unique kinetic solution $u^\varepsilon\in L^1([0,T]; L^1(\mathbb{T}^N))$  a.s..
Therefore, there exists a Borel-measurable function
\[
\mathcal{G}^{\varepsilon}: C([0,T];\mathcal{U})\rightarrow L^1([0,T];L^1(\mathbb{T}^N))
\]
such that $u^{\varepsilon}(\cdot)=\mathcal{G}^{\varepsilon}(W(\cdot))$.

Let $h\in L^2([0,T];U)$ with $h(t)=\sum_{k\geq 1}h_k(t) e_k$, we consider the following skeleton equation
\begin{eqnarray}\label{P-2}
\left\{
  \begin{array}{ll}
    du^h=\Delta (A(u^h))dt+\sum_{k\geq 1}g^k(x,u^h(t,x))h_k(t)dt,\\
    u^h(0)=u_0.
  \end{array}
\right.
\end{eqnarray}

The solution $u^h$, whose existence and uniqueness will be proved in next section, defines a measurable mapping $\mathcal{G}^0: C([0,T];\mathcal{U})\rightarrow L^1([0,T];L^1(\mathbb{T}^N))$ so that  $\mathcal{G}^0\Big(\int^{\cdot}_0 h(s)ds\Big):=u^h(\cdot)$.

\smallskip
We are ready to proceed with the statement of our main result.
\begin{thm}\label{thm-3}
Assume Hypothesis H holds. Then $u^{\varepsilon}$ satisfies the large deviation principle on $L^1([0,T];L^1(\mathbb{T}^N))$ with the good rate function $I$ given by (\ref{equ-27-1}).
\end{thm}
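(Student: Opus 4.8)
The plan is to verify that the family $\{\mathcal{G}^{\varepsilon}\}$ satisfies Condition B, which by Theorem \ref{thm-7} (together with the fact that Condition B implies Condition A) yields the large deviation principle on $L^1([0,T];L^1(\mathbb{T}^N))$ with the good rate function $I$ from (\ref{equ-27-1}). The whole argument rests on three pillars: (1) the global well-posedness of the skeleton equation (\ref{P-2}), so that $\mathcal{G}^0$ is well defined; (2) the stability of the skeleton solution map with respect to the control $h$ in the weak topology of $S_M$, which is item (ii) of Condition B; and (3) the convergence of the randomly perturbed equations $Y^\varepsilon=\mathcal{G}^{\varepsilon}(W(\cdot)+\tfrac{1}{\sqrt{\varepsilon}}\int_0^\cdot h^\varepsilon)$ toward $Z^\varepsilon=\mathcal{G}^0(\int_0^\cdot h^\varepsilon)$ in $L^1([0,T];L^1(\mathbb{T}^N))$ in probability, uniformly over $h^\varepsilon\in\mathcal{A}_M$, which is item (i). Each of pillars (1)–(3) is a kinetic-solution analogue of an assertion already available in \cite{DGG} for the stochastic equation; the Girsanov-free formulation of Condition B from \cite{MSZ} is exactly what makes this feasible, since it never requires absolute continuity of the shifted measures.

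For pillar (1), existence of $u^h$ will be obtained by mimicking the construction in \cite{DGG}: regularize $A$ and $g$, solve the approximate (non-degenerate, Lipschitz) skeleton equations, derive uniform $L^{m+1}$ and kinetic-measure estimates using the energy identity (the controlled analogue of Definition \ref{dfn-1} with the stochastic integral replaced by the drift $\sum_k g^k h_k\,dt$ and the Itô correction term absent), and pass to the limit; the a priori bounds are uniform over $h\in S_M$ because of the elementary estimate $\int_0^T\|\sum_k g^k(x,u)h_k(t)\|_{L^1}\,dt \lesssim \int_0^T (1+\|u\|_{L^1})|h(t)|_U\,dt$ combined with Hölder in $t$ and the constraint $\int_0^T|h|_U^2\,dt\le M$. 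Uniqueness of $u^h$, and more generally continuous dependence on $(A,g,u_0,h)$, will be proved by the doubling-of-variables method: compare $f_1=I_{u^{h_1}>\xi}$ and $\bar f_2=1-I_{u^{h_2}>\xi}$, test the two kinetic formulations against a mollifier $\varrho_\delta(x-y)\psi_\delta(\xi-\zeta)$, and track the error terms. Relative to the stochastic doubling-of-variables computation in \cite{D-V-1, DGG} there are no Itô-correction or martingale terms; the new contribution is the drift cross-term, which is controlled by the Lipschitz bound (\ref{equ-29}) on $g$ and absorbed into a Gronwall argument, giving
\[
\underset{0\le t\le T}{\mathrm{ess\,sup}}\ \|u^{h_1}(t)-u^{h_2}(t)\|_{L^1(\mathbb{T}^N)}\ \le\ C_M\,\|u_{0,1}-u_{0,2}\|_{L^1(\mathbb{T}^N)}\ +\ (\text{terms}\to 0\ \text{as}\ d(g_1,g_2)\to 0,\ h_1\to h_2).
\]
Compactness of $K_M$ (needed for the good-rate-function conclusion, and implied by (ii) together with weak compactness of $S_M$) then follows because $S_M$ is weakly compact and $h\mapsto u^h$ is continuous from $(S_M,\text{weak})$ into $L^1([0,T]\times\mathbb{T}^N)$.

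Pillar (3), item (i) of Condition B, is where the main obstacle lies. The natural route is to introduce an auxiliary process $\hat u^\varepsilon$ solving the skeleton-type equation driven by $h^\varepsilon$ but retaining the vanishing noise $\sqrt\varepsilon\,\Phi(\hat u^\varepsilon)\,dW$ — or, alternatively, to compare $u^\varepsilon$ (the law of $Y^\varepsilon$ under the shifted Wiener measure) directly with $Z^\varepsilon$ via doubling of variables applied to the two kinetic formulations, one of which carries an Itô term of size $O(\varepsilon)$ and a stochastic integral whose quadratic variation is $O(\varepsilon)$. Testing $f^{Y^\varepsilon}$ against $\bar f^{Z^\varepsilon}$ with the mollifier, one obtains an inequality for $\mathbb{E}\|Y^\varepsilon(t)-Z^\varepsilon(t)\|_{L^1}$ whose right-hand side is a Gronwall-type term (handled by the $g$-Lipschitz bound, with the shared control $h^\varepsilon$ making the drift cross-terms cancel to leading order) plus genuinely small remainders: the $O(\varepsilon)$ Itô correction, the vanishing stochastic integral (controlled in $L^1(\Omega)$ by Burkholder–Davis–Gundy and the linear growth (\ref{equ-28}), giving $O(\sqrt\varepsilon)$), and error terms from the degeneracy of $a^2$ near and away from the origin that must be shown to vanish as the mollification parameter $\delta\to 0$ uniformly in $\varepsilon$. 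The delicate point — exactly the difficulty flagged in the introduction relative to \cite{DZZ} — is that $a^2(r)=A'(r)$ is neither bounded nor Lipschitz, so the parabolic dissipation term $\langle f, a^2(\xi)\Delta\varphi\rangle$ does not yield a clean sign after doubling; one must exploit the structural bounds in Hypothesis H ($|a'(r)|\le K|r|^{(m-3)/2}$ and the two-sided $\Psi$-estimate) to show the "error" dissipation terms are non-positive up to a quantity controlled by the kinetic measures, which are bounded uniformly in $\varepsilon$ by the $L^{m+1}$ a priori estimate. Once $\mathbb{E}\|Y^\varepsilon(t)-Z^\varepsilon(t)\|_{L^1}\to 0$ is established after first sending $\varepsilon\to 0$ and then $\delta\to 0$ in the right order, integrating in $t$ gives $L^1([0,T];L^1)$-convergence in $L^1(\Omega)$, hence in probability, uniformly over $\mathcal{A}_M$, which is precisely item (i). This completes the verification of Condition B and therefore the theorem.
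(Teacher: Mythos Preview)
Your outline for pillars (1) and (3) is essentially the paper's approach and is sound: uniqueness and the $L^1$-stability estimate for the skeleton equation come from doubling of variables (Proposition \ref{prp-1} and Theorem \ref{thm-2}), existence from regularized approximations as in \cite{DGG}, and item (i) of Condition B from a direct doubling-of-variables comparison of $Y^\varepsilon$ and $Z^\varepsilon$ in which the It\^o correction is $O(\varepsilon)$ and the martingale term is $O(\sqrt{\varepsilon})$ after BDG (Theorem \ref{thm-5}).

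There is, however, a genuine gap in your treatment of item (ii). You propose to obtain continuity of $h\mapsto u^h$ on $(S_M,\text{weak})$ directly from the doubling-of-variables estimate, writing a bound with ``terms $\to 0$ as $h_1\to h_2$''. But the doubling-of-variables computation, when applied to $u^{h_1}$ and $u^{h_2}$, produces a cross-term of the form
\[
\sum_{k\geq 1}\int_0^t\!\!\int_{(\mathbb{T}^N)^2}\!\!\int_{\mathbb{R}^2}\rho_\gamma(x-y)\,\chi_1(\xi,\zeta)\,g^k(y,\zeta)\,\big(h_{1,k}(s)-h_{2,k}(s)\big)\,d\nu^1_{x,s}\otimes d\nu^2_{y,s}\,dx\,dy\,ds,
\]
and this does \emph{not} vanish under weak convergence $h^\varepsilon\rightharpoonup h$ in $L^2([0,T];U)$, because the integrand against which $h_1-h_2$ is paired depends on the solutions $u^{h_i}$, which themselves vary with $\varepsilon$. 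Weak convergence only helps when the test function is fixed. This is precisely why the paper does \emph{not} attack item (ii) by doubling of variables alone.

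Instead, the paper introduces the regularized approximations $u^{h^\varepsilon}_n$ solving $\mathcal{E}(A_n,g_n,u_{0,n})$ with the non-degenerate diffusion $a_n\geq 2/n$. For fixed $n$ these solutions lie in $C([0,T];H)\cap L^2([0,T];H^1(\mathbb{T}^N))$ with bounds uniform in $\varepsilon$ (equations (\ref{e-24})--(\ref{e-26})); an Aubin--Lions/fractional-Sobolev compactness argument (Propositions \ref{prpp-1}, \ref{prp-5}) then yields a subsequence $u^{h^{\varepsilon_{m_k}}}_n\to u^*$ strongly in $L^2([0,T];H)$. Passing to the limit in the \emph{weak} formulation, the term $\int_0^t\langle g^k_n(\cdot,u^*)\,h^{\varepsilon_{m_k}}_k,\phi\rangle\,ds$ now involves a \emph{fixed} element of $L^2([0,T];U)$ paired against $h^{\varepsilon_{m_k}}-h$, so weak convergence suffices and one identifies $u^*=u^h_n$. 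Finally, the uniform-in-$h$ convergence $u^h_n\to u^h$ (Proposition \ref{prp-4}), which \emph{does} follow from the doubling-of-variables stability estimate of Theorem \ref{thm-2} because here $h$ is the same on both sides, closes the triangle inequality in (\ref{eqq-6}). You need this two-layer structure (regularize, use compactness for weak-to-strong, then uniform approximation) to make item (ii) work.
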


\section{Skeleton equation}

\subsection{Existence and uniqueness of solutions to the skeleton equation}\label{s-1}
In this subsection, we fix $h\in S_M$, and assume $h(t)=\sum\limits_{k\geq 1}h_k(t)e_k$, where $\{e_k\}_{k\geq 1}$ is an orthonormal basis of $U$. Now, we introduce definitions of solution to the skeleton equation (\ref{P-2}).
\begin{dfn}\label{dfn-4-1}(Entropy solution)
An entropy solution of (\ref{P-2}) is a measurable function $u^h: [0,T]\rightarrow L^{m+1}(\mathbb{T}^N)$ such that
\begin{description}
  \item[(i)] $u^h\in L^{m+1}(\Omega\times [0,T];L^{m+1}(\mathbb{T}^N))$
  \item[(ii)] For all $f\in C_b(\mathbb{R})$, we have $\Psi_f(u^h)\in L^2([0,T];H^1(\mathbb{T}^N))$ and
\[
\partial_i \Psi_f(u^h)=f(u^h)\partial_i \Psi(u^h).
\]

  \item[(iii)] For all convex function $\eta\in C^2(\mathbb{R})$ with $\eta''$ compactly supported and all non-negative $\phi\in C^1_c( \mathbb{T}^N\times[0,T))$, we have
\begin{eqnarray}\notag
-\int^T_0\int_{\mathbb{T}^N}\eta(u^h)\partial_t \phi dxdt&\leq & \int_{\mathbb{T}^N}\eta(u_0)\phi(0)dx+\int^T_0\int_{\mathbb{T}^N}q_{\eta}(u)\Delta \phi dxdt\\
\notag
&& -\int^T_0\int_{\mathbb{T}^N}\phi\eta''(u^h)|\nabla \Psi(u^h)|^2dxdt\\
\label{e-2-1}
&& +\sum_{k\geq 1}\int^T_0\int_{\mathbb{T}^N} g^k(x,u^h(x,t))\phi\eta'(u^h)h_k(t)dxdt, \quad a.s.,
\end{eqnarray}

where $q_{\eta}$ is any function satisfying $q'_{\eta}(\xi)=\eta'(\xi)a^2(\xi)$.
\end{description}

\end{dfn}
\begin{dfn}(Kinetic solution)
Let $u_0\in L^{m+1}(\mathbb{T}^N)$. A measurable function $u^h: \mathbb{T}^N\times [0,T]\rightarrow \mathbb{R}$ is said to be a kinetic solution to (\ref{P-2}), if there exists $C_m>0$ such that
\begin{eqnarray}\label{e-19}
\int^T_0\|u^h(t)\|^{m+1}_{L^{m+1}(\mathbb{T}^N)}dt\leq C_m,
\end{eqnarray}
and if there exists a measure $m_h\in \mathcal{M}^+_0(\mathbb{T}^N\times [0,T]\times \mathbb{R})$ such that $f_h:= I_{u^h>\xi}$ satisfies that for all $\varphi\in C^1_c(\mathbb{T}^N\times [0,T]\times \mathbb{R})$,
\begin{eqnarray}\notag
&&\int^T_0\langle f_h(t), \partial_t \varphi(t)\rangle dt+\langle f_0, \varphi(0)\rangle +\int^T_0\langle f_h(t), a^2(\xi)\Delta \varphi (t)\rangle dt\\
\notag
&=&\int^T_0\int_{\mathbb{T}^N}\partial_{\xi}\varphi (x,t,u^h(x,t))|\nabla \Psi(u^h)|^2dxdt\\
\label{P-3}
&& -\sum_{k\geq 1}\int^T_0\int_{\mathbb{T}^N} g^k(x,u^h(x,t))\varphi (x,t, u^h(x,t))h_k(t)dxdt + m_h(\partial_{\xi} \varphi),
\end{eqnarray}
where $f_0(x,\xi)=I_{u_0(x)>\xi}$.
\end{dfn}

\begin{dfn}(Generalized kinetic solution)
Let $f_0:\mathbb{T}^N\times \mathbb{R}\rightarrow [0,1]$ be a kinetic function. A measurable function $f_h:\mathbb{T}^N\times [0,T]\times \mathbb{R}\rightarrow [0,1]$  is said to be a generalized kinetic solution to (\ref{P-2}) with the initial datum $f_0$, if $(f_h(t))=(f_h(t,\cdot,\cdot))$ is a kinetic function such that for $\nu^h:=-\partial_{\xi} f_h$ satisfies
\begin{eqnarray}
\int^T_0\int_{\mathbb{T}^N}\int_{\mathbb{R}}|\xi|^{m+1}d\nu^h_{x,t}(\xi)dxdt\leq C_m,
\end{eqnarray}
where $C_m$ is a positive constant and there exists a measure $m_h\in \mathcal{M}^+_0(\mathbb{T}^N\times [0,T]\times \mathbb{R})$ such that for all $\varphi\in C^1_c(\mathbb{T}^N\times [0,T]\times \mathbb{R})$,
\begin{eqnarray}\label{P-4}\notag
&&\int^T_0\langle f_h(t), \partial_t \varphi(t)\rangle dt+\langle f_0, \varphi(0)\rangle +\int^T_0\langle f_h(t), a^2(\xi)\Delta \varphi (t)\rangle dt\\
&=& \int^T_0\int_{\mathbb{T}^N}\int_{\mathbb{R}}\partial_{\xi}\varphi (x,t,\xi)|\nabla \Psi(\xi)|^2d\nu_{x,t}(\xi)dxdt\\ \notag
&& -\sum_{k\geq 1}\int^T_0\int_{\mathbb{T}^N}\int_{\mathbb{R}} g^k(x,\xi)\varphi (x,t,\xi)h_k(t)d\nu^h_{x,t}(\xi)dxdt + m_h(\partial_{\xi} \varphi).
\end{eqnarray}
\end{dfn}

\

In the following, we firstly prove the uniqueness of the skeleton equations (\ref{P-2}). Then, based on the uniqueness, we show the existence.

Similarly to Lemma \ref{lem-1}, we reformulate (\ref{P-4}) to a strong version, it yields that for all $t\in [0,T]$,
\begin{eqnarray}\notag
&&-\langle f^+_h(t),\varphi\rangle+\langle f_0, \varphi\rangle+\int^t_0\langle f_h(s), a^2(\xi)\Delta \varphi\rangle ds\\
\notag
&=&
\int^t_0\int_{\mathbb{T}^N}\int_{\mathbb{R}}\partial_{\xi}\varphi (x,\xi)|\nabla \Psi(\xi)|^2d\nu_{x,s}(\xi)dxds\\
\label{P-5}
&& -\sum_{k\geq 1}\int^t_0\int_{\mathbb{T}^N}\int_{\mathbb{R}} g^k(x,\xi)\varphi (x,\xi)h_k(s)d\nu^h_{x,s}(\xi)dxds + \langle m_h,\partial_{\xi} \varphi\rangle([0,t]),
\end{eqnarray}
where $\langle m_h,\partial_{\xi} \varphi\rangle([0,t])=\int_{\mathbb{T}^N\times[0,t]\times \mathbb{R}}\partial_{\xi}\varphi(x,\xi)dm_h(x,s,\xi)$.

\
Consider the solution $\tilde{u}^h$ being the solution of
\begin{eqnarray}\label{e-6}
\left\{
  \begin{array}{ll}
    d\tilde{u}^h(t,x)=\Delta (\tilde{A}(\tilde{u}^h))dt+\sum_{k\geq 1}\tilde{g}^k(x,\tilde{u}^h(t,x)) h_k(t)dt,\\
    \tilde{u}^h(0)=\tilde{u}_0.
  \end{array}
\right.
\end{eqnarray}
Define
\[
\tilde{a}(r)=\sqrt{\tilde{A}'(r)}, \quad \tilde{\Psi}(r)=\int^r_0 \tilde{a}(s)ds.
\]
In the following, with the help of (\ref{P-5}), we prove a comparison theorem for two generalized kinetic solutions of  (\ref{P-2}) and (\ref{e-6}), respectively.

\begin{prp}\label{prp-1}
Assume $(A,g,u_0)$ and $(\tilde{A},\tilde{g},\tilde{u}_0)$ satisfy Hypothesis H, and let $f_i, i=1,2$ be two generalized solutions to (\ref{P-2}) and (\ref{e-6}) {\color{rr}with $f_{1,0}(x,\xi)=I_{u_0(x)>\xi}$ and $f_{2,0}(x,\xi)=I_{\tilde{u}_0(x)>\xi}$}, then, for $0\leq t\leq T$, and nonnegative test functions $\rho\in C^{\infty}(\mathbb{T}^N), \psi\in C^{\infty}_c(\mathbb{R})$, we have
\begin{eqnarray}\notag
&&\int_{(\mathbb{T}^N)^2}\int_{\mathbb{R}^2}\rho(x-y)\psi(\xi-\zeta)\Big(f^{\pm}_1(x,t,\xi)\bar{f}^{\pm}_2(y,t,\zeta)+\bar{f}^{\pm}_1(x,t,\xi){f}^{\pm}_2(y,t,\zeta)\Big)d\xi d\zeta dxdy\\ \notag
&\leq& \int_{(\mathbb{T}^N)^2}\int_{\mathbb{R}^2}\rho(x-y)\psi(\xi-\zeta)\Big(f_{1,0}(x,\xi)\bar{f}_{2,0}(y,\zeta)+\bar{f}_{1,0}(x,\xi){f}_{2,0}(y,\zeta)\Big)d\xi d\zeta dxdy\\
\label{P-7}
&&\ +2K_1+2K_2+2K_3,
\end{eqnarray}
where
\begin{eqnarray*}
K_1&=&-\int^t_0\int_{(\mathbb{T}^N)^2}\int_{\mathbb{R}^2}\partial^2_{x_iy_i}\rho(x-y)(l(\xi,\zeta)+\tilde{l}(\xi,\zeta))d\nu^1_{x,s}\otimes d \nu^2_{y,s}(\xi,\zeta) dxdyds,\\
K_2&=&-2\int^t_0\int_{(\mathbb{T}^N)^2}\int_{\mathbb{R}^2}\nabla_x \Psi(\xi)\cdot\nabla_y \tilde{\Psi}(\zeta) \alpha d\nu^1_{x,s}\otimes d \nu^2_{y,s}(\xi,\zeta)dxdyds,
\end{eqnarray*}
and
\begin{eqnarray*}
K_3=\sum_{k\geq 1}\int^t_0\int_{(\mathbb{T}^N)^2}\rho(x-y)\int_{\mathbb{R}^2}{\color{rr}\chi_1(\xi,\zeta)}(g^{k}(x,\xi)-\tilde{g}^{k}(y,\zeta))h_k(s)d \nu^1_{x,s}\otimes d\nu^2_{y,s}(\xi,\zeta) dxdyds
\end{eqnarray*}
with $l(\xi,\zeta)=\int^{\xi}_{\zeta}\int^{\xi'}_{\zeta}\psi(\xi'-\zeta')a^2(\xi')d\zeta'd\xi'$, $\tilde{l}(\xi,\zeta)=\int^{\xi}_{\zeta}\int^{\xi'}_{\zeta}\psi(\xi'-\zeta')\tilde{a}^2(\xi')d\zeta'd\xi'$ and {\color{rr}$\chi_1(\xi,\zeta)=\int^{\xi}_{-\infty}\psi(\xi'-\zeta)d\xi'=\int^{\xi-\zeta}_{-\infty}\psi(y)dy$}.
\end{prp}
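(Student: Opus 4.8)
The plan is to adapt the doubling-of-variables method of Debussche and Vovelle \cite{D-V-1}, in the form used for the stochastic equation in \cite{DGG}, to the deterministic skeleton equations (\ref{P-2}) and (\ref{e-6}). The main simplification compared with the stochastic case is that, with the Brownian integral replaced by the controlled drift $\sum_k g^k h_k\,dt$, there is no It\^o correction term and the whole argument is pathwise, so no expectation is taken.

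First I would fix nonnegative kernels $\rho\in C^\infty(\mathbb{T}^N)$ and $\psi\in C^\infty_c(\mathbb{R})$ and, for fixed $(y,\zeta)$, insert the test function $\varphi(x,\xi)=\rho(x-y)\psi(\xi-\zeta)$ into the strengthened formulation (\ref{P-5}) for $f_1$ (first multiplied by a velocity cutoff $\theta_R(\xi)$, removed at the end using that $\nu^1,\nu^2$ and the kinetic measures vanish at infinity, see Definition \ref{dfn-3} and (\ref{qq-6})), and do the same with the conjugate solution $\bar f_2=1-f_2$ of (\ref{e-6}), whose generating measure is $\pa_\xi\bar f_2=\nu^2$. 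Then I would couple the two: multiply the identity for $f_1$ by $\bar f_2(y,s,\zeta)$, the one for $\bar f_2$ by $f_1(x,s,\xi)$, and integrate over the remaining variables. Since $s\mapsto\langle f_h(s),\varphi\rangle$ is of bounded variation -- which is exactly the input used in Lemma \ref{lem-1} -- and no stochastic term is present, the time ``product rule'' that turns $\langle f_{1,0}\bar f_{2,0},\rho\psi\rangle$ into $\langle f_1^+(t)\bar f_2^+(t),\rho\psi\rangle$ is justified by the same time-mollification device as in the proof of Lemma \ref{lem-1}.

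The resulting terms split into four groups. (1) The datum term reproduces the right-hand side of (\ref{P-7}) at $t=0$. (2) The two kinetic-measure contributions, after the integration by parts in the velocity variable, take the form $-\int_0^t\!\int\!\int\rho(x-y)\psi(\xi-\zeta)\,d\nu^2_{y,s}(\zeta)\,dm_h(x,s,\xi)$ and its symmetric partner with $\tilde m_h$, both nonpositive; discarding them is what converts the identity into the inequality (\ref{P-7}). (3) The drift terms carrying $g^k(x,\xi)h_k$ and $\tilde g^k(y,\zeta)h_k$ appear paired with $\pa_\xi\varphi=\rho\,\psi'$ and $\pa_\zeta\varphi$; integrating by parts in $\xi$ and $\zeta$ and using the primitive $\chi_1(\xi,\zeta)$ of $\psi$ (the constant parts $\int_{\mathbb{R}}\psi$ cancelling between the two contributions), they collapse into $K_3$. (4) The delicate group consists of the second-order terms $\langle f_1,a^2(\xi)\Delta_x\varphi\rangle$ and $\langle\bar f_2,\tilde a^2(\zeta)\Delta_y\varphi\rangle$ together with the two dissipation terms involving $|\na\Psi(\xi)|^2d\nu^1$ and $|\na\tilde\Psi(\zeta)|^2d\nu^2$. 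Performing two integrations by parts in $\xi$ (resp.\ $\zeta$) and using $\sum_i\pa^2_{x_iy_i}\rho(x-y)=-\Delta_x\rho(x-y)=-\Delta_y\rho(x-y)$, the ``pure'' pieces reorganize into $-\int\pa^2_{x_iy_i}\rho\,(l+\tilde l)$ integrated against $d\nu^1\otimes d\nu^2$, i.e.\ $K_1$, with $l,\tilde l$ the iterated primitives in the statement; a square completion of the self- and cross-dissipation parts leaves a nonpositive perfect-square remainder (discarded) plus the mixed-gradient term $-2\int\na_x\Psi\cdot\na_y\tilde\Psi\,\alpha$, i.e.\ $K_2$. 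All interchanges of integration and integrations by parts here are legitimate by Hypothesis H, which controls $a,\tilde a$ by $K(1+|r|^{(m-1)/2})$ and provides the $(m+1)$-st moment bound on $\nu^h$.

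Finally, the symmetric combination $f_1\bar f_2+\bar f_1 f_2$ (which for equilibria equals $(f_1-f_2)^2$) is precisely the object whose time evolution is being computed; the pieces $f_1\bar f_2$ and $\bar f_1 f_2$ each contribute one copy of $K_1+K_2+K_3$, which accounts for the factors $2$ in (\ref{P-7}). Passing, at the fixed time $t$, to the left/right weak limits $f_i^{t\pm}$ supplied by Proposition \ref{prp-3} upgrades the inequality to the stated version with $f_1^\pm,f_2^\pm$. The hard part will be step (4): executing the double integration by parts in the velocity variables for the \emph{unbounded} coefficients $a^2,\tilde a^2$ and recognizing that the two parabolic terms and the two dissipation measures recombine exactly into $K_1$ and $K_2$ -- this is where the auxiliary functions $l,\tilde l$ (and $\chi_1$ in step (3)) have to be reverse-engineered from the desired cancellation, and where the growth bounds on $a$ together with the moment bound on $\nu^h$ are indispensable.
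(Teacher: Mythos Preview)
Your proposal is correct and follows essentially the same route as the paper: write the strengthened formulation (\ref{P-5}) for $f_1$ and for $\bar f_2$, tensorize against $\alpha=\rho(x-y)\psi(\xi-\zeta)$, use the identities $(\nabla_x+\nabla_y)\alpha=0$ and $(\partial_\xi+\partial_\zeta)\alpha=0$, discard the two nonpositive kinetic-measure terms, rewrite the second-order pieces via the auxiliary primitives $l,\tilde l$ to obtain $K_1$, complete the square on the dissipation terms to obtain $K_2$, and combine the two drift contributions via $\chi_1=\chi_2$ into $K_3$; the $f^-$ case follows by taking $t_n\uparrow t$.

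One small inaccuracy worth flagging: in your step (3) you write that the drift terms ``appear paired with $\partial_\xi\varphi=\rho\,\psi'$''. In fact, in (\ref{P-5}) the drift term carries $\varphi$ itself (integrated against $d\nu^h$), and after tensorization the terms $I_4,I_5$ contain $\alpha=\rho\psi$ multiplied by $f_1^+$ (resp.\ $\bar f_2^+$); the integration by parts you need is to write $\psi(\xi-\zeta)=\partial_\xi\chi_1(\xi,\zeta)$ and move the $\partial_\xi$ onto $f_1^+$ to produce $d\nu^1$ (and symmetrically with $\chi_2$ and $\bar f_2^+$). No ``constant parts'' need to cancel --- the point is simply that $\chi_1(\xi,\zeta)=\chi_2(\zeta,\xi)$, which is what allows $I_4+I_5$ to collapse into the single difference $g^k(x,\xi)-\tilde g^k(y,\zeta)$ in $K_3$.
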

\begin{proof}
Denote by $f_1(x,t,\xi)$ and $f_2(y,t,\zeta)$ be two generalized solutions to (\ref{P-2}) and (\ref{e-6}) with the corresponding kinetic measures $m_1$ and $m_2$.
Let $\varphi_1\in C^{\infty}_c(\mathbb{T}^N_x\times \mathbb{R}_{\xi})$ and
$\varphi_2\in C^{\infty}_c(\mathbb{T}^N_y\times \mathbb{R}_{\zeta})$. By (\ref{P-5}), we have
\begin{eqnarray}\label{P-8}\notag
\langle f^{+}_1(t),\varphi_1\rangle&=&\langle f_{1,0}, \varphi_1\rangle+\int^t_0\langle f_1(s), a^2(\xi) \Delta_x \varphi_1(s)\rangle ds\\ \notag
&& -\int^t_0\int_{\mathbb{T}^N}\int_{\mathbb{R}}\partial_{\xi}\varphi_1 (x,\xi)|\nabla_x \Psi(\xi)|^2d\nu_{x,s}(\xi)dxds\\
&& +\sum_{k\geq 1}\int^t_0\int_{\mathbb{T}^N}\int_{\mathbb{R}} g^{k}(x,\xi)\varphi_1 (x,\xi)h_k(s)d\nu^1_{x,s}(\xi)dxds -\langle m_1,\partial_{\xi} \varphi_1\rangle([0,t]),
\end{eqnarray}
where $f_{1,0}=I_{u_0>\xi}$ and $\nu^1_{x,s}(\xi)=-\partial_{\xi}f^{+}_1(s,x,\xi)=\partial_{\xi}\bar{f}^{+}_1(s,x,\xi)$.
Similarly,
\begin{eqnarray}\label{P-9}\notag
\langle \bar{f}^{+}_2(t),\varphi_2\rangle&=&\langle \bar{f}_{2,0}, \varphi_2\rangle+\int^t_0\langle \bar{f}_2(s), \tilde{a}^2(\zeta)\Delta_y \varphi_2(s)\rangle ds\\ \notag
&& +\int^t_0\int_{\mathbb{T}^N}\int_{\mathbb{R}}\partial_{\zeta}\varphi_2 (y,\zeta)|\nabla_y \tilde{\Psi}(\zeta)|^2d\nu_{y,s}(\zeta)dyds\\ \notag
&& -\sum_{k\geq 1}\int^t_0\int_{\mathbb{T}^N}\int_{\mathbb{R}} \tilde{g}^{k}(y,\zeta)\varphi_2 (y,\zeta)h_k(s)d\nu^2_{y,s}(\zeta)dyds +\langle m_2,\partial_{\zeta} \varphi_2\rangle([0,t]).
\end{eqnarray}
where $f_{2,0}=I_{\tilde{u}_0>\zeta}$ and  $\nu^2_{y,s}(\zeta)=\partial_{\zeta}\bar{f}^{+}_2(s,y,\zeta)=-\partial_{\zeta}f^{+}_2(s,y,\zeta)$.

Denote the duality distribution over $\mathbb{T}^N_x\times \mathbb{R}_\xi\times \mathbb{T}^N_y\times \mathbb{R}_\zeta$ by $\langle\langle\cdot,\cdot\rangle\rangle$. Setting $\alpha(x,\xi,y,\zeta)=\varphi_1(x,\xi)\varphi_2(y,\zeta)$ and using the integration by parts formula, we have
\begin{eqnarray}\notag
\langle\langle f^+_1(t)\bar{f}^+_2(t), \alpha \rangle\rangle
&=& \langle\langle f_{1,0}\bar{f}_{2,0}, \alpha \rangle\rangle+\int^t_0\int_{(\mathbb{T}^N)^2}\int_{\mathbb{R}^2}f_1\bar{f}_2(a^2(\xi)\Delta_x+\tilde{a}^2(\zeta)\Delta_y) \alpha d\xi d\zeta dxdyds\\ \notag
&& +\int^t_0\int_{(\mathbb{T}^N)^2}\int_{\mathbb{R}^2}f^+_1|\nabla_y \tilde{\Psi}(\zeta)|^2\partial_{\zeta}\alpha d\xi d\nu^2_{y,s}(\zeta)dxdyds\\ \notag
&& -\int^t_0\int_{(\mathbb{T}^N)^2}\int_{\mathbb{R}^2}\bar{f}^+_2|\nabla_x \Psi(\xi)|^2\partial_{\xi}\alpha  d\nu^1_{x,s}(\xi)d\zeta dxdyds\\ \notag
 &&-\sum_{k\geq 1}\int^t_0\int_{(\mathbb{T}^N)^2}\int_{\mathbb{R}^2}f^+_1(s,x,\xi)\alpha \tilde{g}^{k}(y,\zeta)h_k(s)d\xi d\nu^2_{y,s}(\zeta)dxdyds\\ \notag
&&+\sum_{k\geq 1}\int^t_0\int_{(\mathbb{T}^N)^2}\int_{\mathbb{R}^2}\bar{f}^+_2(s,y,\zeta)\alpha g^k(x,\xi)h_k(s)d\zeta d\nu^1_{x,s}(\xi)dxdyds\\ \notag
 &&+\int^t_0\int_{(\mathbb{T}^N)^2}\int_{\mathbb{R}^2}f^+_1(s,x,\xi)\partial_{\zeta} \alpha dm_2(y,\zeta,s)d\xi dx\\ \notag
 &&-\int^t_0\int_{(\mathbb{T}^N)^2}\int_{\mathbb{R}^2}\bar{f}^+_2(s,y,\zeta)\partial_{\xi} \alpha dm_1(x,\xi,s)d\zeta dy\\
\label{P-10}
&=:& \langle\langle f_{1,0}\bar{f}_{2,0}, \alpha \rangle\rangle+I_1+I_2+I_3+I_4+I_5+I_6+I_7.
\end{eqnarray}
Similarly, we  have
\begin{eqnarray}\notag
\langle\langle \bar{f}^+_1(t)f^+_2(t), \alpha \rangle\rangle
&=& \langle\langle \bar{f}_{1,0}f_{2,0}, \alpha \rangle\rangle+\int^t_0\int_{(\mathbb{T}^N)^2}\int_{\mathbb{R}^2}\bar{f}_1{f}_2(a^2(\xi)\Delta_x+\tilde{a}^2(\zeta)\Delta_y) \alpha d\xi d\zeta dxdyds\\ \notag
&& -\int^t_0\int_{(\mathbb{T}^N)^2}\int_{\mathbb{R}^2}\bar{f}^+_1|\nabla_y \tilde{\Psi}(\zeta)|^2\partial_{\zeta}\alpha d\xi d\nu^2_{y,s}(\zeta)dxdyds\\ \notag
&& +\int^t_0\int_{(\mathbb{T}^N)^2}\int_{\mathbb{R}^2}f^+_2|\nabla_x \Psi(\xi)|^2\partial_{\xi}\alpha  d\nu^1_{x,s}(\xi)d\zeta dxdyds\\ \notag
&& \ +\sum_{k\geq 1}\int^t_0\int_{(\mathbb{T}^N)^2}\int_{\mathbb{R}^2}\bar{f}^+_1(s,x,\xi)\alpha \tilde{g}^{k}(y,\zeta)h_k(s)d\xi d\nu^2_{y,s}(\zeta)dxdyds\\ \notag
&&\ -\sum_{k\geq 1}\int^t_0\int_{(\mathbb{T}^N)^2}\int_{\mathbb{R}^2}{f}^+_2(s,y,\zeta)\alpha g^k(x,\xi)h_k(s) d\nu^1_{x,s}(\xi)d\zeta dxdyds\\ \notag
&&\ -\int^t_0\int_{(\mathbb{T}^N)^2}\int_{\mathbb{R}^2}\bar{f}^+_1(s,x,\xi)\partial_{\zeta} \alpha dm_2(y,\zeta,s)d\xi dx\\ \notag
 &&\ +\int^t_0\int_{(\mathbb{T}^N)^2}\int_{\mathbb{R}^2}{f}^+_2(s,y,\zeta)\partial_{\xi} \alpha dm_1(x,\xi,s)d\zeta dy\\
\label{P-10-1}
&=:& \langle\langle \bar{f}_{1,0}{f}_{2,0}, \alpha \rangle\rangle+\bar{I}_1+\bar{I}_2+\bar{I}_3+\bar{I}_4+\bar{I}_5+\bar{I}_6+\bar{I}_7.
\end{eqnarray}

By a density argument, (\ref{P-10}) and (\ref{P-10-1}) remain true for any test function $\alpha\in C^{\infty}_c(\mathbb{T}^N_x\times \mathbb{R}_\xi\times \mathbb{T}^N_y\times \mathbb{R}_\zeta)$. The assumption that $\alpha$ is compactly supported can be relaxed thanks to (\ref{equ-37}) on $m_i$ and (\ref{equ-26}) on $\nu_i$, $i=1,2.$
 Using a truncation argument of $\alpha$, it is easy to see that (\ref{P-10}) and (\ref{P-10-1}) remain true if $\alpha \in C^{\infty}_b(\mathbb{T}^N_x\times \mathbb{R}_\xi\times \mathbb{T}^N_y\times \mathbb{R}_\zeta)$ is compactly supported in a neighbourhood of the diagonal
\[
\Big\{(x,\xi,x,\xi); x\in \mathbb{T}^N, \xi\in \mathbb{R}\Big\}.
\]
Taking $\alpha=\rho(x-y)\psi(\xi-\zeta)$, then we have the following remarkable identities
\begin{eqnarray}\label{P-11}
(\nabla_x+\nabla_y)\alpha=0, \quad (\partial_{\xi}+\partial_{\zeta})\alpha=0.
\end{eqnarray}
With the aid of (\ref{P-11}), we get
\begin{eqnarray*}
I_6&=&-\int^t_0\int_{(\mathbb{T}^N)^2}\int_{\mathbb{R}^2}f^+_1(s,x,\xi)\partial_{\xi} \alpha dm_2(y,\zeta,s)d\xi dx\\
&=&-\int^t_0\int_{(\mathbb{T}^N)^2}\int_{\mathbb{R}^2}\alpha dm_2(y,\zeta,s)d\nu^1_{x,s}(\xi)dx\leq 0,
\end{eqnarray*}
and
\begin{eqnarray*}
I_7&=&\int^t_0\int_{(\mathbb{T}^N)^2}\int_{\mathbb{R}^2}\bar{f}^+_2(s,y,\zeta)\partial_{\zeta} \alpha dm_1(x,\xi,s)d\zeta dy\\
&=&-\int^t_0\int_{(\mathbb{T}^N)^2}\int_{\mathbb{R}^2}\alpha dm_1(x,\xi,s)d\nu^2_{y,s}(\zeta) dy\leq 0.
\end{eqnarray*}
Similarly, we have $\bar{I}_6+\bar{I}_7\leq 0$.
Define
\[
l(\xi,\zeta)=\int^{\xi}_{\zeta}\int^{\xi'}_{\zeta}\psi(\xi'-\zeta')a^2(\xi')d\zeta'd\xi'.
\]
Clearly, $\partial_{\zeta}\partial_{\xi}l(\xi,\zeta)=-\psi(\xi-\zeta)a^2(\xi)$.
Then we deduce that
\begin{eqnarray}\notag
&&\int^t_0\int_{(\mathbb{T}^N)^2}\int_{\mathbb{R}^2}f_1\bar{f}_2a^2(\xi)\Delta_x\alpha d\xi d\zeta dxdyds\\ \notag
&=&- \int^t_0\int_{(\mathbb{T}^N)^2}\int_{\mathbb{R}^2}f_1\bar{f}_2\Delta_x\rho(x-y)\partial_{\zeta}\partial_{\xi}l(\xi,\zeta)d\xi d\zeta dxdyds\\ \notag
&=& \int^t_0\int_{(\mathbb{T}^N)^2}\int_{\mathbb{R}^2}f_1\partial_{\zeta}\bar{f}_2\Delta_x\rho(x-y)\partial_{\xi}l(\xi,\zeta)d\xi d\zeta dxdyds\\ \notag
&=& \int^t_0\int_{(\mathbb{T}^N)^2}\int_{\mathbb{R}^2}f_1\Delta_x\rho(x-y)\partial_{\xi}l(\xi,\zeta)d\xi d \nu^2_{y,s}(\zeta) dxdyds\\ \notag
&=& \int^t_0\int_{(\mathbb{T}^N)^2}\int_{\mathbb{R}^2}\Delta_x\rho(x-y)l(\xi,\zeta)d\nu^1_{x,s}\otimes d \nu^2_{y,s}(\xi,\zeta) dxdyds\\
\label{e-4}
&=& -\int^t_0\int_{(\mathbb{T}^N)^2}\int_{\mathbb{R}^2}\partial^2_{x_iy_i}\rho(x-y)l(\xi,\zeta)d\nu^1_{x,s}\otimes d \nu^2_{y,s}(\xi,\zeta) dxdyds.
\end{eqnarray}

Define
\[
\tilde{l}(\xi,\zeta)=\int^{\xi}_{\zeta}\int^{\xi'}_{\zeta}\psi(\xi'-\zeta')\tilde{a}^2(\zeta')d\zeta'd\xi'.
\]
Clearly, $\partial_{\zeta}\partial_{\xi}\tilde{l}(\xi,\zeta)=-\psi(\xi-\zeta)\tilde{a}^2(\zeta)$.
Similar to the above, we deduce that
\begin{eqnarray}\notag
&&\int^t_0\int_{(\mathbb{T}^N)^2}\int_{\mathbb{R}^2}f_1\bar{f}_2\tilde{a}^2(\zeta)\Delta_y \alpha d\xi d\zeta dxdyds\\
\label{e-5}
&=& -\int^t_0\int_{(\mathbb{T}^N)^2}\int_{\mathbb{R}^2}\partial^2_{x_iy_i}\rho(x-y)\tilde{l}(\xi,\zeta)d\nu^1_{x,s}\otimes d \nu^2_{y,s}(\xi,\zeta) dxdyds.
\end{eqnarray}
Combining (\ref{e-4}) and (\ref{e-5}), we get
\begin{eqnarray*}
I_1&=&-\int^t_0\int_{(\mathbb{T}^N)^2}\int_{\mathbb{R}^2}\partial^2_{x_iy_i}\rho(x-y)(l(\xi,\zeta)+\tilde{l}(\xi,\zeta))d\nu^1_{x,s}\otimes d \nu^2_{y,s}(\xi,\zeta) dxdyds\\
&=:&K_1.
\end{eqnarray*}
By the same method, we get $\bar{I}_1=K_1.$
Utilizing (\ref{P-11}), it follows that
\begin{eqnarray*}
I_2&=&-\int^t_0\int_{(\mathbb{T}^N)^2}\int_{\mathbb{R}^2}f^+_1|\nabla_y \tilde{\Psi}(\zeta)|^2\partial_{\xi}\alpha d\xi d\nu^2_{y,s}(\zeta)dxdyds\\
&=& \int^t_0\int_{(\mathbb{T}^N)^2}\int_{\mathbb{R}^2}\partial_{\xi}f^+_1|\nabla_y \tilde{\Psi}(\zeta)|^2\alpha d\xi d\nu^2_{y,s}(\zeta)dxdyds\\
&=& -\int^t_0\int_{(\mathbb{T}^N)^2}\int_{\mathbb{R}^2}|\nabla_y \tilde{\Psi}(\zeta)|^2\alpha d\nu^1_{x,s}\otimes d \nu^2_{y,s}(\xi,\zeta)dxdyds,
\end{eqnarray*}
and
\begin{eqnarray*}
I_3=-\int^t_0\int_{(\mathbb{T}^N)^2}\int_{\mathbb{R}^2}|\nabla_x \Psi(\xi)|^2 \alpha d\nu^1_{x,s}\otimes d \nu^2_{y,s}(\xi,\zeta)dxdyds.
\end{eqnarray*}
Combining the above two estimates, it follows that
\begin{eqnarray*}
I_2+I_3&=&-\int^t_0\int_{(\mathbb{T}^N)^2}\int_{\mathbb{R}^2}(|\nabla_x \Psi(\xi)|^2+|\nabla_y \tilde{\Psi}(\zeta)|^2) \alpha d\nu^1_{x,s}\otimes d \nu^2_{y,s}(\xi,\zeta)dxdyds\\
&\leq& -2\int^t_0\int_{(\mathbb{T}^N)^2}\int_{\mathbb{R}^2}\nabla_x \Psi(\xi)\cdot\nabla_y \tilde{\Psi}(\zeta) \alpha d\nu^1_{x,s}\otimes d \nu^2_{y,s}(\xi,\zeta)dxdyds\\
&=:& K_2.
\end{eqnarray*}
Using the same method as above, we achieve $\bar{I}_2+\bar{I}_3\leq K_2$.
Set
\[
\chi_1(\xi,\zeta)=\int^{\xi}_{-\infty}\psi(\xi'-\zeta)d\xi'
\]
for some $\xi, \zeta\in\mathbb{R}$.  Then
\begin{eqnarray}\notag
I_4
&=&-\sum_{k\geq 1}\int^t_0\int_{(\mathbb{T}^N)^2}\int_{\mathbb{R}^2}f^+_1(s,x,\xi)\rho(x-y)\partial_{\xi} \chi_1(\xi,\zeta) \tilde{g}^{k}(y,\zeta)h_k(s)d\xi d\nu^2_{y,s}(\zeta)dxdyds\\ \notag
&=&-\sum_{k\geq 1}\int^t_0\int_{(\mathbb{T}^N)^2}\int_{\mathbb{R}}\rho(x-y) \tilde{g}^{k}(y,\zeta)h_k(s)\Big(\int_{\mathbb{R}}f^+_1(s,x,\xi)\partial_{\xi} \chi_1(\xi,\zeta)d\xi\Big) d\nu^2_{y,s}(\zeta)dxdyds\\
\label{P-12}
&=&-\sum_{k\geq 1}\int^t_0\int_{(\mathbb{T}^N)^2}\int_{\mathbb{R}^2}\rho(x-y)\chi_1(\xi,\zeta) \tilde{g}^{k}(y,\zeta)h_k(s)d \nu^1_{x,s}\otimes d\nu^2_{y,s}(\xi,\zeta)dxdyds.
\end{eqnarray}
The third equality is obtained by
\begin{eqnarray*}\notag
\int_{\mathbb{R}}f^+_1(s,x,\xi)\partial_{\xi} \chi_1(\xi,\zeta)d\xi
&=&-\int_{\mathbb{R}}\partial_{\xi}f^+_1(s,x,\xi) \chi_1(\xi,\zeta)d\xi\\
&=&\int_{\mathbb{R}} \chi_1(\xi,\zeta)d\nu^1_{x,s}(\xi).
\end{eqnarray*}
Similarly, for $\xi, \zeta\in\mathbb{R}$, let
\[
\chi_2(\zeta,\xi)=\int^{\infty}_{\zeta}\psi(\xi-\zeta')d\zeta',
\]
then
\begin{eqnarray}\notag
I_5
&=&-\sum_{k\geq 1}\int^t_0\int_{(\mathbb{T}^N)^2}\int_{\mathbb{R}^2}\bar{f}^+_2(s,y,\zeta)\rho(x-y)\partial_{\zeta}\chi_2(\zeta, \xi) g^k(x,\xi)h_k(s)d\nu^1_{x,s}(\xi)d\zeta dxdyds\\ \notag
&=&-\sum_{k\geq 1}\int^t_0\int_{(\mathbb{T}^N)^2}\int_{\mathbb{R}}\rho(x-y) g^k(x,\xi)h_k(s)\Big(\int_{\mathbb{R}} \bar{f}^+_2(s,y,\zeta)\partial_{\zeta}\chi_2(\zeta, \xi)d\zeta\Big) d\nu^1_{x,s}(\xi) dxdyds\\
\label{P-13}
&=& \sum_{k\geq 1}\int^t_0\int_{(\mathbb{T}^N)^2}\int_{\mathbb{R}^2}\chi_2(\zeta, \xi)\rho(x-y) g^k(x,\xi)h_k(s) d \nu^1_{x,s}\otimes d\nu^2_{y,s}(\xi,\zeta) dxdyds.
\end{eqnarray}

Note that $\chi_1(\xi,\zeta)=\chi_2(\zeta, \xi)=\int^{\xi-\zeta}_{-\infty}\psi(y)dy$. We deduce from (\ref{P-12}) and (\ref{P-13}) that
\begin{eqnarray*}
I_4+I_5&=&\sum_{k\geq 1}\int^t_0\int_{(\mathbb{T}^N)^2}\rho(x-y)\int_{\mathbb{R}^2}\chi_1(\xi,\zeta)( g^k(x,\xi)-\tilde{g}^{k}(y,\zeta))h_k(s)d \nu^1_{x,s}\otimes d\nu^2_{y,s}(\xi,\zeta) dxdyds\\
&=:& K_3.
\end{eqnarray*}
Similarly, we have $\bar{I}_4+\bar{I}_5=K_3$.
Based on the above, the equation (\ref{P-7}) is established for $f^+_i$. To obtain the result for  $f^-_i$, we take $t_n\uparrow t$, write (\ref{P-7}) for $f^+_i(t_n)$ and let $n\rightarrow \infty$.

\end{proof}

\begin{thm}\label{thm-2}
Assume $(A,g,u_0)$ and $(\tilde{A},\tilde{g},\tilde{u}_0)$ satisfy Hypothesis H and let $u:=u^h, \tilde{u}:=\tilde{u}^h$ be kinetic solutions to (\ref{P-2}) and (\ref{e-6}), respectively. We claim that
\begin{description}
\item[(1)] if $A=\tilde{A}$ and $g=\tilde{g}$, then for a.e. $t\in [0,T]$,
\begin{eqnarray}\label{e-20}
\|u(t)-\tilde{u}(t)\|_{L^1(\mathbb{T}^N)}
\leq e^{K(T+M)}\|u_0-\tilde{u}_0\|_{L^1(\mathbb{T}^N)} .
\end{eqnarray}
  \item[(2)] furthermore, for all $\gamma, \delta\in (0,1)$, $\lambda\in [0,1]$ and $\alpha\in (0,1\wedge\frac{m}{2})$, we have for a.e. $t\in [0,T]$, it holds that
\begin{eqnarray*}
\|u(t)-\tilde{u}(t)\|_{L^1(\mathbb{T}^N)}
&\leq& \mathcal{E}_t(\gamma,\delta)+e^{K(T+M)}\Big[\|u_0-\tilde{u}_0\|_{L^1(\mathbb{T}^N)} +\mathcal{E}_0(\gamma,\delta)\\ \notag
&& +N_0\gamma^{-2}(\lambda^2+\delta^{2\alpha})\Big(1+\|u\|^m_{L^m([0,T]\times \mathbb{T}^N)}+\|\tilde{u}\|^m_{L^m([0,T]\times \mathbb{T}^N)}\Big)\\
\notag
&& +N_0\gamma^{-2}\|I_{|u|\geq R_{\lambda}}(1+|u|)\|^m_{L^m([0,T]\times \mathbb{T}^N)}+N_0\gamma^{-2}\|I_{|\tilde{u}|\geq R_{\lambda}}(1+|\tilde{u}|)\|^m_{L^m([0,T]\times \mathbb{T}^N)} \\
&& +2 K(\gamma+2\delta)(T+M)+2d^{\frac{1}{2}}(g,\tilde{g})C(m)(T+M)\Big],
\end{eqnarray*}
where $\mathcal{E}_0(\gamma,\delta), \mathcal{E}_t(\gamma,\delta)\rightarrow 0$ as $\gamma, \delta\rightarrow 0$, the constant $N_0$ is independent of $\gamma,\delta,\lambda$, and $R_{\lambda}$ is defined by
\[
R_{\lambda}=\sup\{R\in [0,\infty]: |a(r)-\tilde{a}(r)|\leq \lambda, \forall |r|< R\}.
\]

\end{description}
\end{thm}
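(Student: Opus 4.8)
\medskip
\noindent\textbf{Proof strategy.} The plan is to derive both statements from the comparison estimate (\ref{P-7}) of Proposition \ref{prp-1}, applied to the equilibrium generalized solutions $f_1=I_{u>\xi}$, $f_2=I_{\tilde u>\zeta}$ associated to the two kinetic solutions. First I would specialise the test functions: take $\rho=\rho_\delta$ a symmetric mollifier on $\mathbb{T}^N$ supported in a ball of radius $\delta$, and $\psi=\psi_\gamma$ a mollifier on $\mathbb{R}$ supported in $[-\gamma,\gamma]$. Since $u,\tilde u$ are kinetic solutions, $\nu^1_{x,s}=\delta_{u(x,s)}$ and $\nu^2_{y,s}=\delta_{\tilde u(y,s)}$, the one-sided limits $f^{\pm}$ stay at equilibrium, and the elementary identity $\int_{\mathbb{R}}\big(I_{a>\xi}(1-I_{b>\xi})+(1-I_{a>\xi})I_{b>\xi}\big)\,d\xi=|a-b|$ shows that, as $\delta,\gamma\to 0$, the left side of (\ref{P-7}) converges to $\|u(t)-\tilde u(t)\|_{L^1(\mathbb{T}^N)}$ and the first term on the right to $\|u_0-\tilde u_0\|_{L^1(\mathbb{T}^N)}$; for fixed $\delta,\gamma$ the resulting discrepancies are collected into $\mathcal{E}_t(\gamma,\delta)$ and $\mathcal{E}_0(\gamma,\delta)$. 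It then remains to estimate $K_1+K_2$ and $K_3$ and to close the inequality by Gronwall's lemma. Throughout I would use $|h(s)|_U\le\frac{1}{2}(1+|h(s)|_U^2)$, so that $\int_0^T|h(s)|_U\,ds\le\frac{1}{2}(T+M)$ for $h\in S_M$; this is what turns $\int_0^t|h(s)|_U\,ds$ into the exponent $K(T+M)$.

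For the second-order terms I would use $\sum_i\partial^2_{x_iy_i}\rho_\delta(x-y)=-\Delta\rho_\delta(x-y)$, integrate by parts once in $x$ and once in $y$, and invoke $\Psi(u),\tilde\Psi(\tilde u)\in L^2([0,T];H^1(\mathbb{T}^N))$ together with $\nabla\Psi(u)=a(u)\nabla u$; a direct computation then reduces $K_1+K_2$ to
\[
\int_0^t\int_{(\mathbb{T}^N)^2}\rho_\delta(x-y)\,\psi_\gamma\big(u(x,s)-\tilde u(y,s)\big)\,\frac{\big(a(u(x,s))-\tilde a(\tilde u(y,s))\big)^2}{a(u(x,s))\,\tilde a(\tilde u(y,s))}\,\nabla\Psi(u(x,s))\cdot\nabla\tilde\Psi(\tilde u(y,s))\,dx\,dy\,ds .
\]
When $A=\tilde A$, the factor $(a-\tilde a)^2$ together with the constraint $|u(x,s)-\tilde u(y,s)|\le\gamma$ (support of $\psi_\gamma$) and the continuity of $a$ forces this quantity to zero as $\gamma,\delta\to0$, so it does not contribute; thus (\ref{P-7}) reduces, in the limit, to $\phi(t)\le\|u_0-\tilde u_0\|_{L^1}+2K\int_0^t\phi(s)|h(s)|_U\,ds$ with $\phi(t)=\|u(t)-\tilde u(t)\|_{L^1}$ (the transport term $K_3$ is treated below, with $g=\tilde g$), and Gronwall's lemma together with the bound on $\int_0^t|h(s)|_U\,ds$ yields (\ref{e-20}); this is part (1). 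When $A\ne\tilde A$ the integrand above is no longer harmless, and I would split $\mathbb{R}$ at the threshold $R_\lambda$: on $\{|r|<R_\lambda\}$ one bounds $|a-\tilde a|\le\lambda$ and uses the local $\alpha$-Hölder modulus of $a$ coming from $|a'(r)|\le K|r|^{(m-3)/2}$, while on $\{|r|\ge R_\lambda\}$ one uses $u,\tilde u\in L^m([0,T]\times\mathbb{T}^N)$ (from (\ref{e-19})) to truncate. This produces the term $N_0\gamma^{-2}(\lambda^2+\delta^{2\alpha})\big(1+\|u\|^m_{L^m([0,T]\times\mathbb{T}^N)}+\|\tilde u\|^m_{L^m([0,T]\times\mathbb{T}^N)}\big)$ and the two tail terms $N_0\gamma^{-2}\|I_{|u|\ge R_\lambda}(1+|u|)\|^m_{L^m([0,T]\times\mathbb{T}^N)}$ and $N_0\gamma^{-2}\|I_{|\tilde u|\ge R_\lambda}(1+|\tilde u|)\|^m_{L^m([0,T]\times\mathbb{T}^N)}$.

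For the transport term $K_3$ I would bound $|\chi_1(\xi,\zeta)|\le1$ and use the decomposition
\[
g^k(x,\xi)-\tilde g^k(y,\zeta)=\big(g^k(x,\xi)-g^k(y,\xi)\big)+\big(g^k(y,\xi)-g^k(y,\zeta)\big)+\big(g^k(y,\zeta)-\tilde g^k(y,\zeta)\big),
\]
sum in $k$ and apply the Cauchy-Schwarz inequality in $k$ against $\big(\sum_k h_k(s)^2\big)^{1/2}=|h(s)|_U$. By (\ref{equ-29}) the first bracket is $\le K|x-y|\le K\delta$ on the support of $\rho_\delta$, which, together with the $O(\gamma)$ error from replacing $\chi_1$ by $I_{\xi>\zeta}$, yields the term $2K(\gamma+2\delta)(T+M)$; the second bracket is $\le K|\xi-\zeta|=K|u(x,s)-\tilde u(y,s)|$ after integrating the Dirac masses, and $\int_{\mathbb{T}^N}\rho_\delta(x-y)|u(x,s)-\tilde u(y,s)|\,dx\to|u(y,s)-\tilde u(y,s)|$, contributing the Gronwall term $2K\int_0^t\phi(s)|h(s)|_U\,ds$; the third bracket is $\le d(g,\tilde g)^{1/2}(1+|\tilde u(y,s)|)^{(m+1)/2}$ by (\ref{e-14}), and since $\tilde u\in L^{m+1}([0,T]\times\mathbb{T}^N)$ by (\ref{e-19}) the factor $(1+|\tilde u|)^{(m+1)/2}$ lies in $L^2([0,T]\times\mathbb{T}^N)$ with norm bounded by a constant $C(m)$, which (again using $\int_0^T|h(s)|_U\,ds\le\frac{1}{2}(T+M)$) gives the term $2d^{1/2}(g,\tilde g)C(m)(T+M)$.

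Collecting the estimates, $\phi(t):=\|u(t)-\tilde u(t)\|_{L^1(\mathbb{T}^N)}-\mathcal{E}_t(\gamma,\delta)$ satisfies an inequality of the form $\phi(t)\le D+2K\int_0^t\phi(s)|h(s)|_U\,ds$, where $D$ is the sum of $\|u_0-\tilde u_0\|_{L^1}$, $\mathcal{E}_0(\gamma,\delta)$ and the parabolic and noise error terms above (all nondecreasing in $t$); Gronwall's lemma and $2K\int_0^t|h(s)|_U\,ds\le K(T+M)$ then give the asserted bound, with $N_0$ absorbing the numerical constants, and part (1) is the special case in which the $A$- and $g$-dependent errors vanish and one may moreover let $\gamma,\delta\to0$. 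I expect the treatment of $K_1+K_2$ when $A\ne\tilde A$ to be the main obstacle: one must control the doubled second-order term near the set $\{u=0\}$, where $a$ degenerates, while both the spatial mollifier $\rho_\delta$ and the velocity mollifier $\psi_\gamma$ are still present, and it is precisely here that the degeneracy rates built into Hypothesis H (the $|r|^{(m-3)/2}$ bound on $a'$ and the lower bounds on $\Psi$) have to be used quantitatively and where the factor $\gamma^{-2}$ arises.
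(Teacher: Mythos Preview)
Your overall architecture---apply Proposition~\ref{prp-1} with mollifiers, estimate $K_1+K_2$ and $K_3$ separately, then close by Gronwall---is exactly the paper's. There are, however, two concrete problems that would prevent the argument from going through as written.

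\medskip
\textbf{The reduction of $K_1+K_2$ is incorrect.} Your displayed expression
\[
\int_0^t\!\!\int_{(\mathbb{T}^N)^2}\rho(x-y)\,\psi(u-\tilde u)\,\frac{(a(u)-\tilde a(\tilde u))^2}{a(u)\,\tilde a(\tilde u)}\,\nabla\Psi(u)\cdot\nabla\tilde\Psi(\tilde u)\,dx\,dy\,ds
\]
cannot be what $K_1+K_2$ equals: the denominator $a(u)\tilde a(\tilde u)$ vanishes where $u$ or $\tilde u$ is zero (for the porous medium nonlinearity $a(0)=0$), and the sign of $\nabla\Psi(u)\cdot\nabla\tilde\Psi(\tilde u)$ is uncontrolled, so this does not give a one-sided bound. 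The paper does \emph{not} integrate the second derivatives of $\rho$ by parts; instead it expands $l$, $\tilde l$ and the cross term from $K_2$ on the region $\{\zeta\le\xi\}\cup\{\zeta\ge\xi\}$ and observes that their sum is exactly
\[
-\int\partial^2_{x_iy_i}\rho_\gamma(x-y)\int\!\!\int I_{r\gtrless\xi'}\psi_\delta(\xi'-r)\,\big|a(\xi')-\tilde a(r)\big|^2\,dr\,d\xi',
\]
a nonnegative integrand multiplied by $\partial^2\rho_\gamma$. One then bounds $\|\partial^2\rho_\gamma\|_{L^1}\lesssim\gamma^{-2}$ and, following (4.18) in \cite{DGG}, splits $|a(\xi')-\tilde a(r)|\le|a(\xi')-a(r)|+|a(r)-\tilde a(r)|$, using the Hölder estimate $|a(\xi')-a(r)|\lesssim|\xi'-r|^{\alpha}(1+|\xi'|+|r|)^{(m-1)/2-\alpha}$ from $|a'|\le K|r|^{(m-3)/2}$ on the first piece (hence the factor $\delta^{2\alpha}$, since $|\xi'-r|\le\delta$ on $\mathrm{supp}\,\psi_\delta$) and the threshold $R_\lambda$ on the second. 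This is where the precise $N_0\gamma^{-2}(\lambda^2+\delta^{2\alpha})$ structure comes from; mere continuity of $a$ is not enough even for part~(1), because the $\gamma^{-2}$ must be beaten, and this forces the coupling $\delta=\gamma^{3/(2\alpha)}$ before letting $\gamma\to0$.

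\medskip
\textbf{Two smaller issues.} First, you have swapped the roles of $\gamma$ and $\delta$ relative to the statement: in the theorem (and the paper's proof) $\rho_\gamma$ is the \emph{spatial} mollifier and $\psi_\delta$ the velocity one, so the $\gamma^{-2}$ in the claim really is the $L^1$ bound on second spatial derivatives, while your convention would produce $\delta^{-2}$ there. Second, Gronwall must be applied to the mollified quantity
\[
\Phi(t)=\int_{(\mathbb{T}^N)^2}\!\!\int_{\mathbb{R}^2}\rho_\gamma(x-y)\psi_\delta(\xi-\zeta)\big(f_1\bar f_2+\bar f_1 f_2\big)\,d\xi\,d\zeta\,dx\,dy,
\]
because that is what appears on both sides of (\ref{P-7}) and in the feedback term from $K_3$ (after relating $\int|\xi-\zeta|\,d\nu^1\otimes d\nu^2$ back to $\Phi(s)$ via the $2\delta$-error in (\ref{qeq-14})--(\ref{qeq-14-1})). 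Only \emph{after} Gronwall does one pass from $\Phi(t)$ to $\|u(t)-\tilde u(t)\|_{L^1}$ via $\mathcal E_t(\gamma,\delta)$. Your choice $\phi(t)=\|u(t)-\tilde u(t)\|_{L^1}-\mathcal E_t$ does not close, since $\mathcal E_t$ is not known to be monotone or differentiable in $t$.
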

\begin{proof}
Let $\rho_{\gamma}, \psi_{\delta}$ be approximations to the identity on $\mathbb{T}^N$ and $\mathbb{R}$, respectively. That is, let $\rho\in C^{\infty}(\mathbb{T}^N)$, $\psi\in C^{\infty}_c(\mathbb{R})$ be symmetric nonnegative functions such as $\int_{\mathbb{T}^N}\rho =1$, $\int_{\mathbb{R}}\psi =1$ and supp$\psi \subset (-1,1)$. We define
\[
\rho_{\gamma}(x)=\frac{1}{\gamma^N}\rho\Big(\frac{x}{\gamma}\Big), \quad \psi_{\delta}(\xi)=\frac{1}{\delta}\psi\Big(\frac{\xi}{\delta}\Big).
\]

Letting $\rho:=\rho_{\gamma}(x-y)$ and $\psi:=\psi_{\delta}(\xi-\zeta)$ in Proposition \ref{prp-1}, we get from (\ref{P-7}) that
\begin{eqnarray}\notag
 &&\int_{(\mathbb{T}^N)^2}\int_{\mathbb{R}^2}\rho_\gamma (x-y)\psi_{\delta}(\xi-\zeta)(f^{\pm}_1(x,t,\xi)\bar{f}^{\pm}_2(y,t,\zeta)+\bar{f}^{\pm}_1(x,t,\xi)f^{\pm}_2(y,t,\zeta))d\xi d\zeta dxdy\\
\notag
&\leq& \int_{(\mathbb{T}^N)^2}\int_{\mathbb{R}^2}\rho_\gamma (x-y)\psi_{\delta}(\xi-\zeta)(f_{1,0}(x,\xi)\bar{f}_{2,0}(y,\zeta)+\bar{f}_{1,0}(x,\xi)f_{2,0}(y,\zeta))d\xi d\zeta dxdy\\
\label{equ-39}
&&\ +2R_1+2R_2+2R_3,
\end{eqnarray}
where $R_1$, $R_2$, $R_3$ in (\ref{equ-39}) are the corresponding $K_1$, $K_2, K_3$ in the statement of Proposition \ref{prp-1} with $\rho$, $\psi$ replaced by $\rho_{\gamma}$, $\psi_{\delta}$, respectively. For simplicity, we still denote by $l(\xi,\zeta), \tilde{l}(\xi,\zeta), \chi_1(\xi,\zeta)$ with $\rho$, $\psi$ replaced by $\rho_{\gamma}$, $\psi_{\delta}$, respectively.

For any $t\in [0,1]$, define the error term
\begin{eqnarray}\notag
&&\mathcal{E}_t(\gamma,\delta)\\ \notag
&:=&\int_{(\mathbb{T}^N)^2}\int_{\mathbb{R}^2}(f^{\pm}_1(x,t,\xi)\bar{f}^{\pm}_2(y,t,\zeta)+\bar{f}^{\pm}_1(x,t,\xi){f}^{\pm}_2(y,t,\zeta))\rho_{\gamma}(x-y)\psi_{\delta}(\xi-\zeta)dxdyd\xi d\zeta\\
\label{qq-3}
&&-\int_{\mathbb{T}^N}\int_{\mathbb{R}}(f^{\pm}_1(x,t,\xi)\bar{f}^{\pm}_2(x,t,\xi)+\bar{f}^{\pm}_1(x,t,\xi)f^{\pm}_2(x,t,\xi))d\xi dx.
\end{eqnarray}
By utilizing $\int_{\mathbb{R}}\psi_{\delta}(\xi-\zeta)d\zeta=1$ and $\int^{\delta}_0\psi_{\delta}(\zeta')d\zeta'=\int^{0}_{-\delta}\psi_{\delta}(\zeta')d\zeta'=\frac{1}{2}$, we get
\begin{eqnarray}\notag
&&\Big|\int_{(\mathbb{T}^N)^2}\int_{\mathbb{R}}\rho_{\gamma}(x-y)f^{\pm}_1(x,t,\xi)\bar{f}^{\pm}_2(y,t,\xi)d\xi dxdy\\ \notag
&&\ -\int_{(\mathbb{T}^N)^2}\int_{\mathbb{R}^2}f^{\pm}_1(x,t,\xi)\bar{f}^{\pm}_2(y,t,\zeta)\rho_{\gamma}(x-y)\psi_{\delta}(\xi-\zeta)dxdyd\xi d\zeta\Big|\\ \notag
&=&\Big|\int_{(\mathbb{T}^N)^2}\rho_{\gamma}(x-y)\int_{\mathbb{R}}f^{\pm}_1(x,t,\xi)\int_{\mathbb{R}}\psi_{\delta}(\xi-\zeta)(\bar{f}^{\pm}_2(y,t,\xi)-\bar{f}^{\pm}_2(y,t,\zeta))d\zeta d\xi dxdy\Big|\\ \notag
&\leq&\int_{(\mathbb{T}^N)^2}\rho_{\gamma}(x-y)\int_{\mathbb{R}}f^{\pm}_1(x,t,\xi)\int^{\xi}_{\xi-\delta}\psi_{\delta}(\xi-\zeta)(\bar{f}^{\pm}_2(y,t,\xi)-\bar{f}^{\pm}_2(y,t,\zeta)) d\zeta d\xi dxdy\\ \notag
&&\ +\int_{(\mathbb{T}^N)^2}\int_{\mathbb{R}}\rho_{\gamma}(x-y)f^{\pm}_1(x,t,\xi)\int^{\xi+\delta}_{\xi}\psi_{\delta}(\xi-\zeta)(\bar{f}^{\pm}_2(y,t,\zeta)-\bar{f}^{\pm}_2(y,t,\xi)) d\zeta d\xi dxdy\\ \notag
&\leq&\int_{(\mathbb{T}^N)^2}\rho_{\gamma}(x-y)\int^{\delta}_0\psi_{\delta}(\zeta')\int_{\mathbb{R}}f^{\pm}_1(x,t,\xi)(\bar{f}^{\pm}_2(y,t,\xi)-\bar{f}^{\pm}_2(y,t,\xi-\zeta')) d\xi d\zeta' dxdy\\ \notag
&&\ +\int_{(\mathbb{T}^N)^2}\rho_{\gamma}(x-y)\int^{0}_{-\delta}\psi_{\delta}(\zeta')\int_{\mathbb{R}}f^{\pm}_1(x,t,\xi)(\bar{f}^{\pm}_2(y,t,\xi-\zeta')-\bar{f}^{\pm}_2(y,t,\xi)) d\xi d\zeta' dxdy\\ \notag
&\leq&\delta\int_{(\mathbb{T}^N)^2}\rho_{\gamma}(x-y)\Big(\int^{\delta}_0\psi_{\delta}(\zeta')d\zeta'\Big)\Big(\int_{\mathbb{R}}\partial_{\xi'}\bar{f}^{\pm}_2(y,t,\xi') d\xi'\Big) dxdy\\ \notag
&&\ +\delta\int_{(\mathbb{T}^N)^2}\rho_{\gamma}(x-y)\Big(\int^{0}_{-\delta}\psi_{\delta}(\zeta')d\zeta'\Big)\Big(\int_{\mathbb{R}}\partial_{\xi'}\bar{f}^{\pm}_2(y,t,\xi') d\xi'\Big) dxdy\\
\label{qeq-14}
&\leq & \frac{1}{2}\delta+\frac{1}{2}\delta=\delta,
\end{eqnarray}
where we have taken into account the facts that  $\bar{f}^{\pm}_2(y,t,\xi)$ is increasing in $\xi$, $f^{\pm}_1(x,t,\xi)\leq 1$ and $\int_{\mathbb{R}}\partial_{\xi}\bar{f}^{\pm}_2(y,t,\xi)d\xi=\int_{\mathbb{R}}\nu^2_{y,t}(d\xi)=1$.

Similarly,
\begin{eqnarray}\notag
&&\Big|\int_{(\mathbb{T}^N)^2}\int_{\mathbb{R}}\rho_{\gamma}(x-y)\bar{f}^{\pm}_1(x,t,\xi){f}^{\pm}_2(y,t,\xi)d\xi dxdy\\ \notag
&&\ -\int_{(\mathbb{T}^N)^2}\int_{\mathbb{R}^2}\bar{f}^{\pm}_1(x,t,\xi){f}^{\pm}_2(y,t,\zeta)\rho_{\gamma}(x-y)\psi_{\delta}(\xi-\zeta)dxdyd\xi d\zeta\Big|\\ \notag
&=&\Big|\int_{(\mathbb{T}^N)^2}\rho_{\gamma}(x-y)\int_{\mathbb{R}}\bar{f}^{\pm}_1(x,t,\xi)\int_{\mathbb{R}}\psi_{\delta}(\xi-\zeta)({f}^{\pm}_2(y,t,\xi)-{f}^{\pm}_2(y,t,\zeta))d\zeta d\xi dxdy\Big|\\ \notag
&\leq&\int_{(\mathbb{T}^N)^2}\rho_{\gamma}(x-y)\int_{\mathbb{R}}\bar{f}^{\pm}_1(x,s,\xi)\int^{\xi}_{\xi-\delta}\psi_{\delta}(\xi-\zeta)({f}^{\pm}_2(y,t,\zeta)-{f}^{\pm}_2(y,t,\xi)) d\zeta d\xi dxdy\\ \notag
&&\ +\int_{(\mathbb{T}^N)^2}\int_{\mathbb{R}}\rho_{\gamma}(x-y)\bar{f}^{\pm}_1(x,s,\xi)\int^{\xi+\delta}_{\xi}\psi_{\delta}(\xi-\zeta)({f}^{\pm}_2(y,t,\xi)-{f}^{\pm}_2(y,t,\zeta)) d\zeta d\xi dxdy\\ \notag
&\leq&\int_{(\mathbb{T}^N)^2}\rho_{\gamma}(x-y)\int^{\delta}_0\psi_{\delta}(\zeta')\int_{\mathbb{R}}\bar{f}^{\pm}_1(x,t,\xi)({f}^{\pm}_2(y,t,\xi-\zeta')-{f}^{\pm}_2(y,t,\xi)) d\xi d\zeta' dxdy\\ \notag
&&\ +\int_{(\mathbb{T}^N)^2}\rho_{\gamma}(x-y)\int^{0}_{-\delta}\psi_{\delta}(\zeta')\int_{\mathbb{R}}\bar{f}^{\pm}_1(x,t,\xi)({f}^{\pm}_2(y,t,\xi)-{f}^{\pm}_2(y,t,\xi-\zeta')) d\xi d\zeta' dxdy\\
\label{qeq-14-1}
&\leq & \frac{1}{2}\delta+\frac{1}{2}\delta=\delta.
\end{eqnarray}
Moreover, it follows that
\begin{eqnarray*}
&&\Big|\int_{(\mathbb{T}^N)^2}\int_{\mathbb{R}}\rho_{\gamma}(x-y)f^{\pm}_1(x,t,\xi)\bar{f}^{\pm}_2(y,t,\xi)d\xi dxdy-\int_{\mathbb{T}^N}\int_{\mathbb{R}}f^{\pm}_1(x,t,\xi)\bar{f}^{\pm}_2(x,t,\xi)d\xi dx\Big|\\ \notag
&=&\Big|\int_{\mathbb{T}^N}\int_{|z|<\gamma}\int_{\mathbb{R}}\rho_{\gamma}(z)f^{\pm}_1(x,t,\xi)\bar{f}^{\pm}_2(x-z,t,\xi)d\xi dxdz-\int_{\mathbb{T}^N}\int_{|z|<\gamma}\int_{\mathbb{R}}\rho_{\gamma}(z)f^{\pm}_1(x,t,\xi)\bar{f}^{\pm}_2(x,t,\xi)d\xi dxdz\Big|\\ \notag
&\leq&\sup_{|z|<\gamma}\int_{\mathbb{T}^N}\int_{\mathbb{R}}f^{\pm}_1(x,t,\xi)|\bar{f}^{\pm}_2(x-z,t,\xi)-\bar{f}^{\pm}_2(x,t,\xi)|d\xi dx\\ \notag
&\leq& \sup_{|z|<\gamma}\int_{\mathbb{T}^N}\int_{\mathbb{R}}|-f^{\pm}_2(x-z,t,\xi)+I_{0>\xi}-I_{0>\xi}+f^{\pm}_2(x,t,\xi)|d\xi dx\\
&=& \sup_{|z|<\gamma}\int_{\mathbb{T}^N}\int_{\mathbb{R}}|\Lambda_{f^{\pm}_2}(x-z,t,\xi)-\Lambda_{f^{\pm}_2}(x,t,\xi)|d\xi dx.
\end{eqnarray*}
In view of (\ref{qq-6}),  we have
\begin{eqnarray}\label{qq-1}
\lim_{\gamma\rightarrow 0}\Big|\int_{(\mathbb{T}^N)^2}\int_{\mathbb{R}}\rho_{\gamma}(x-y)f^{\pm}_1(x,t,\xi)\bar{f}^{\pm}_2(y,t,\xi)d\xi dxdy-\int_{\mathbb{T}^N}\int_{\mathbb{R}}f^{\pm}_1(x,t,\xi)\bar{f}^{\pm}_2(x,t,\xi)d\xi dx\Big|= 0.
\end{eqnarray}
Similarly, it holds that
\begin{eqnarray}\label{qq-2}
\lim_{\gamma\rightarrow 0}\Big|\int_{(\mathbb{T}^N)^2}\int_{\mathbb{R}}\rho_{\gamma}(x-y)\bar{f}^{\pm}_1(x,t,\xi)f^{\pm}_2(y,t,\xi)d\xi dxdy-\int_{\mathbb{T}^N}\int_{\mathbb{R}}\bar{f}^{\pm}_1(x,t,\xi)f^{\pm}_2(x,t,\xi)d\xi dx\Big|=0.
\end{eqnarray}
Based on (\ref{e-23})-(\ref{qq-2}), we have
\begin{eqnarray}\label{qq-4}
\lim_{\gamma, \delta\rightarrow 0}\mathcal{E}_t(\gamma,\delta)=0.
\end{eqnarray}
In particular, when $t=0$, it holds that
\begin{eqnarray}\label{qq-5}
\lim_{\gamma, \delta\rightarrow 0}\mathcal{E}_0(\gamma,\delta)=0.
\end{eqnarray}

In the following, we devote to making estimates of $R_1$, $R_2$ and $R_3$.
We begin with the estimates of $R_1$. By the definition of $l(\xi,\zeta)$, we deduce that
\begin{eqnarray}\notag
&&-\int^t_0\int_{(\mathbb{T}^N)^2}\partial^2_{x_iy_i}\rho_{\gamma}(x-y)\int_{\mathbb{R}^2}\int^{\xi}_{\zeta}\int^{\xi'}_{\zeta}\psi_{\delta}(\xi'-r)a^2(\xi')drd\xi'd\nu^1_{x,s}\otimes d \nu^2_{y,s}(\xi,\zeta) dxdyds\\ \notag
&=&-\int^t_0\int_{(\mathbb{T}^N)^2}\partial^2_{x_iy_i}\rho_{\gamma}(x-y)\int_{\zeta\leq \xi}\int^{\xi}_{\zeta}\int^{\xi}_{\zeta}I_{r\leq \xi'}\psi_{\delta}(\xi'-r)a^2(\xi')drd\xi'd\nu^1_{x,s}\otimes d \nu^2_{y,s}(\xi,\zeta) dxdyds\\
\label{e-9}
&&-\int^t_0\int_{(\mathbb{T}^N)^2}\partial^2_{x_iy_i}\rho_{\gamma}(x-y)\int_{\zeta\geq \xi}\int^{\zeta}_{\xi}\int^{\zeta}_{\xi}I_{r\geq \xi'}\psi_{\delta}(\xi'-r)a^2(\xi')drd\xi'd\nu^1_{x,s}\otimes d \nu^2_{y,s}(\xi,\zeta) dxdyds.
\end{eqnarray}
Symmetrically, one has
\begin{eqnarray}\notag
&&-\int^t_0\int_{(\mathbb{T}^N)^2}\int_{\mathbb{R}^2}\partial^2_{x_iy_i}\rho_{\gamma}(x-y)\tilde{l}(\xi,\zeta)d\nu^1_{x,s}\otimes d \nu^2_{y,s}(\xi,\zeta) dxdyds\\ \notag
&=& -\int^t_0\int_{(\mathbb{T}^N)^2}\partial^2_{x_iy_i}\rho_{\gamma}(x-y)\int_{\zeta\leq \xi}\int^{\xi}_{\zeta}\int^{\xi}_{\zeta}I_{r\leq \xi'}\psi_{\delta}(\xi'-r)\tilde{a}^2(r)drd\xi'd\nu^1_{x,s}\otimes d \nu^2_{y,s}(\xi,\zeta) dxdyds\\
\label{e-7}
&&-\int^t_0\int_{(\mathbb{T}^N)^2}\partial^2_{x_iy_i}\rho_{\gamma}(x-y)\int_{\zeta\geq \xi}\int^{\zeta}_{\xi}\int^{\zeta}_{\xi}I_{r\geq \xi'}\psi_{\delta}(\xi'-r)\tilde{a}^2(r)drd\xi'd\nu^1_{x,s}\otimes d \nu^2_{y,s}(\xi,\zeta) dxdyds.
\end{eqnarray}
Adding (\ref{e-9}) and (\ref{e-7}) together, we get
\begin{eqnarray}\notag
&&R_1\\ \notag
&=&-\int^t_0\int_{(\mathbb{T}^N)^2}\partial^2_{x_iy_i}\rho_{\gamma}(x-y)\int_{\zeta\leq \xi}\int^{\xi}_{\zeta}\int^{\xi}_{\zeta}I_{\xi'\geq r}\psi_{\delta}(\xi'-r)(a^2(\xi')+\tilde{a}^2(r))drd\xi'd\nu^1_{x,s}\otimes d \nu^2_{y,s}(\xi,\zeta) dxdyds\\
\label{e-10}
&&-\int^t_0\int_{(\mathbb{T}^N)^2}\partial^2_{x_iy_i}\rho_{\gamma}(x-y)\int_{\zeta\geq \xi}\int^{\zeta}_{\xi}\int^{\zeta}_{\xi}I_{\xi'\leq r}\psi_{\delta}(\xi'-r)(a^2(\xi')+\tilde{a}^2(r))drd\xi'd\nu^1_{x,s}\otimes d \nu^2_{y,s}(\xi,\zeta) dxdyds.
\end{eqnarray}
By the definition of $\Psi$ and $\tilde{\Psi}$, we have
\begin{eqnarray}\notag
R_2&=&-2\int^t_0\int_{(\mathbb{T}^N)^2}\int_{\mathbb{R}^2}\nabla_x \Psi(\xi)\cdot\nabla_y \tilde{\Psi}(\zeta) \rho_{\gamma}(x-y)\psi_{\delta}(\xi-\zeta) d\nu^1_{x,s}\otimes d \nu^2_{y,s}(\xi,\zeta)dxdyds\\ \notag
&=& -2\int^t_0\int_{(\mathbb{T}^N)^2}\int_{\mathbb{R}^2}\partial_{x_i} \Psi(\xi)\partial_{y_i} \tilde{\Psi}(\zeta) \rho_{\gamma}(x-y)\psi_{\delta}(\xi-\zeta) d\nu^1_{x,s}\otimes d \nu^2_{y,s}(\xi,\zeta)dxdyds\\ \notag
&=&-2\int^t_0\int_{(\mathbb{T}^N)^2}\int_{\mathbb{R}^2}\partial_{x_i}\Psi(\xi)\rho_{\gamma}(x-y)\partial_{y_i}\int^{\zeta}_{\xi} \psi_{\delta}(\xi-r)\tilde{a}(r)dr d\nu^1_{x,s}\otimes d \nu^2_{y,s}(\xi,\zeta) dxdyds\\ \notag
&=&2\int^t_0\int_{(\mathbb{T}^N)^2}\int_{\mathbb{R}^2}\partial_{x_i}\Psi(\xi)\partial_{y_i}\rho_{\gamma}(x-y)\int^{\zeta}_{\xi} \psi_{\delta}(\xi-r)\tilde{a}(r)dr d\nu^1_{x,s}\otimes d \nu^2_{y,s}(\xi,\zeta) dxdyds\\ \notag
&=&-2\int^t_0\int_{(\mathbb{T}^N)^2}\int_{\mathbb{R}^2}\partial^2_{x_iy_i}\rho_{\gamma}(x-y)\int^{\xi}_{\zeta}\int^{\zeta}_{\xi'} \psi_{\delta}(\xi'-r)\tilde{a}(r)dr a(\xi')d\xi' d\nu^1_{x,s}\otimes d \nu^2_{y,s}(\xi,\zeta) dxdyds\\ \notag
&=&2\int^t_0\int_{(\mathbb{T}^N)^2}\partial^2_{x_iy_i}\rho_{\gamma}(x-y)\int_{\zeta\leq\xi}\int^{\xi}_{\zeta}\int^{\xi}_{\zeta} I_{r\leq\xi'}\psi_{\delta}(\xi'-r)\tilde{a}(r)a(\xi')drd\xi' d\nu^1_{x,s}\otimes d \nu^2_{y,s}(\xi,\zeta) dxdyds\\ \label{e-11}
&&+2\int^t_0\int_{(\mathbb{T}^N)^2}\partial^2_{x_iy_i}\rho_{\gamma}(x-y)\int_{\zeta\geq\xi}\int^{\zeta}_{\xi}\int^{\zeta}_{\xi} I_{r\geq\xi'}\psi_{\delta}(\xi'-r)\tilde{a}(r)a(\xi')drd\xi' d\nu^1_{x,s}\otimes d \nu^2_{y,s}(\xi,\zeta) dxdyds.
\end{eqnarray}
Combining (\ref{e-10}) and (\ref{e-11}), we get
\begin{eqnarray}\notag
&&R_1+R_2\\ \notag
&=&
 \int^t_0\int_{(\mathbb{T}^N)^2}\partial^2_{x_iy_i}\rho_{\gamma}(x-y)\int_{\zeta\leq\xi}\int^{\xi}_{\zeta}\int^{\xi}_{\zeta} I_{r\leq\xi'}\psi_{\delta}(\xi'-r)(2\tilde{a}(r)a(\xi')-a^2(\xi')-\tilde{a}^2(r))drd\xi' d\nu^1_{x,s}\otimes d \nu^2_{y,s}(\xi,\zeta) dxdyds\\ \notag
&&+\int^t_0\int_{(\mathbb{T}^N)^2}\partial^2_{x_iy_i}\rho_{\gamma}(x-y)\int_{\zeta\geq\xi}\int^{\zeta}_{\xi}\int^{\zeta}_{\xi} I_{r\geq\xi'}\psi_{\delta}(\xi'-r)(2\tilde{a}(r)a(\xi')-a^2(\xi')-\tilde{a}^2(r))drd\xi' d\nu^1_{x,s}\otimes d \nu^2_{y,s}(\xi,\zeta) dxdyds\\
\label{e-12}
&\leq& \int^t_0\int_{(\mathbb{T}^N)^2}\partial^2_{x_iy_i}\rho_{\gamma}(x-y)\int_{\zeta\geq\xi}\int^{\zeta}_{\xi}\int^{\zeta}_{\xi} I_{r\geq\xi'}\psi_{\delta}(\xi'-r)|a(r)-a(\xi')|^2drd\xi' d\nu^1_{x,s}\otimes d \nu^2_{y,s}(\xi,\zeta) dxdyds.
\end{eqnarray}
By the similar argument as in the proof of (4.18) in \cite{DGG}, we get for all $\alpha\in (0,1\wedge \frac{m}{2})$,  there exists a constant $N_0$ independent of $\gamma, \delta, \lambda$ such that
\begin{eqnarray}\notag
2(R_1+R_2)&\leq& N_0\gamma^{-2}(\lambda^2+\delta^{2\alpha})\Big[1+\int^T_0\int_{(\mathbb{T}^N)^2}\int_{\mathbb{R}^2}(|\xi|^m+|\zeta|^m)d\nu^1_{x,s}\otimes d \nu^2_{y,s}(\xi,\zeta) dxdyds\Big]\\
\notag
&& +N_0\gamma^{-2}\int^T_0\int_{(\mathbb{T}^N)^2}\int_{\mathbb{R}^2}I_{|\xi|\geq R_{\lambda}}(1+|\xi|)^m d\nu^1_{x,s}\otimes d \nu^2_{y,s}(\xi,\zeta) dxdyds\\
\label{e-13}
&&+N_0\gamma^{-2}\int^T_0\int_{(\mathbb{T}^N)^2}\int_{\mathbb{R}^2}I_{|\zeta|\geq R_{\lambda}}(1+|\zeta|)^m d\nu^1_{x,s}\otimes d \nu^2_{y,s}(\xi,\zeta) dxdyds.
\end{eqnarray}

For $R_3$, it can be estimated as follows:
\begin{eqnarray}\notag
R_3
&\leq& \int^t_0\int_{(\mathbb{T}^N)^2}\rho_{\gamma}(x-y)\int_{\mathbb{R}^2}{\gamma}_1(\xi,\zeta)\sum_{k\geq 1}| g^k(x,\xi)-\tilde{g}^{k}(y,\zeta)||h_k(s)|d \nu^1_{x,s}\otimes d\nu^2_{y,s}(\xi,\zeta) dxdyds\\ \notag
&\leq& \int^t_0\int_{(\mathbb{T}^N)^2}\rho_{\gamma}(x-y)\int_{\mathbb{R}^2}{\gamma}_1(\xi,\zeta)\sum_{k\geq 1}| g^k(x,\xi)- g^k(y,\zeta)||h_k(s)|d \nu^1_{x,s}\otimes d\nu^2_{y,s}(\xi,\zeta) dxdyds\\ \notag
&&+\int^t_0\int_{(\mathbb{T}^N)^2}\rho_{\gamma}(x-y)\int_{\mathbb{R}^2}{\gamma}_1(\xi,\zeta)\sum_{k\geq 1}| g^k(y,\zeta)-\tilde{g}^{k}(y,\zeta)||h_k(s)|d \nu^1_{x,s}\otimes d\nu^2_{y,s}(\xi,\zeta) dxdyds\\
\label{e-16}
&:=& L_1+L_2.
\end{eqnarray}
By H\"{o}lder inequality and  (\ref{equ-29}), we deduce that
\begin{eqnarray*}
L_1&\leq& \int^t_0\int_{(\mathbb{T}^N)^2}\rho_{\gamma}(x-y)
\int_{\mathbb{R}^2}{\gamma}_1(\xi,\zeta)\Big(\sum_{k\geq 1}| g^k(x,\xi)- g^k(y,\zeta)|^2\Big)^{\frac{1}{2}}
\Big(\sum_{k\geq 1}|h_k(s)|^2\Big)^{\frac{1}{2}}
d \nu^1_{x,s}\otimes d\nu^2_{y,s}(\xi,\zeta) dxdyds\\
&\leq& K\int^t_0|h(s)|_U\int_{(\mathbb{T}^N)^2}\rho_{\gamma}(x-y)|x-y|\int_{\mathbb{R}^2}{\gamma}_1(\xi,\zeta)d \nu^1_{x,s}\otimes d\nu^2_{y,s}(\xi,\zeta) dxdyds\\
&&+ K\int^t_0|h(s)|_U\int_{(\mathbb{T}^N)^2}\rho_{\gamma}(x-y)\int_{\mathbb{R}^2}{\gamma}_1(\xi,\zeta)|\xi-\zeta|d \nu^1_{x,s}\otimes d\nu^2_{y,s}(\xi,\zeta) dxdyds\\
&=:&L_{1,1}+L_{1,2},
\end{eqnarray*}

Note that
\begin{eqnarray*}
\int_{\mathbb{R}^2}{\gamma}_1(\xi,\zeta)d \nu^1_{x,s}\otimes d\nu^2_{y,s}(\xi,\zeta)&\leq& 1,
\\
\int_{(\mathbb{T}^N)^2}\rho_{\gamma}(x-y)|x-y|dxdy&\leq&\gamma,
\end{eqnarray*}
it follows that
\begin{eqnarray}\label{P-14}
L_{1,1}\leq K \gamma\int^t_0|h(s)|_Uds
\leq K(T+M)\gamma.
\end{eqnarray}
To dealing with the term $L_{1,2}$, we adopt the similar method as \cite{DWZZ}.
Taking into account $\nu^1_{x,s}(\xi)=-\partial_{\xi}f^{\pm}_1(s,x,\xi)=\partial_{\xi}\bar{f}^{\pm}_1(s,x,\xi)$ and $\nu^2_{y,s}(\zeta)=\partial_{\zeta}\bar{f}^{\pm}_2(s,y,\zeta)=-\partial_{\zeta}f^{\pm}_2(s,y,\zeta)$, it follows that
\begin{eqnarray*} \notag
L_{1,2}&\leq&  K\int^t_0|h(s)|_U\int_{(\mathbb{T}^N)^2}\int_{\mathbb{R}^2}\rho_{\gamma}(x-y)|\xi-\zeta|d \nu^1_{x,s}\otimes d\nu^2_{y,s}(\xi,\zeta) dxdyds\\ \notag
&=&  K\int^t_0|h(s)|_U\int_{(\mathbb{T}^N)^2}\rho_{\gamma}(x-y)(\xi-\zeta)^{+}d \nu^1_{x,s}\otimes d\nu^2_{y,s}(\xi,\zeta) dxdyds\\ \notag
&&\ + K\int^t_0|h(s)|_U\int_{(\mathbb{T}^N)^2}\rho_{\gamma}(x-y)(\xi-\zeta)^{-}d \nu^1_{x,s}\otimes d\nu^2_{y,s}(\xi,\zeta) dxdyds\\
&=& K\int^t_0|h(s)|_U\int_{(\mathbb{T}^N)^2}\int_{\mathbb{R}}\rho_{\gamma}(x-y)(f^{\pm}_1(x,s,\xi)\bar{f}^{\pm}_2(y,s,\xi)+\bar{f}^{\pm}_1(x,s,\xi){f}^{\pm}_2(y,s,\xi))d\xi dxdyds,
\end{eqnarray*}
where we have used $\delta_{\xi=\zeta}=-\partial_{\xi}\partial_{\zeta}(\xi-\zeta)^{+}=-\partial_{\xi}\partial_{\zeta}(\xi-\zeta)^{-}$.

Then, we deduce from (\ref{qeq-14}) and (\ref{qeq-14-1}) that
\begin{eqnarray}\notag
L_{1,2}&\leq& 2\delta K\int^t_0|h(s)|_Uds\\ \notag
&& + K\int^t_0|h(s)|_U\int_{(\mathbb{T}^N)^2}\int_{\mathbb{R}^2}(f^{\pm}_1\bar{f}^{\pm}_2+\bar{f}^{\pm}_1{f}^{\pm}_2)\rho_{\gamma}(x-y)\psi_{\delta}(\xi-\zeta)dxdyd\xi d\zeta ds\\
\notag
&\leq& 2\delta K(T+M)\\
\label{qeq-16}
&&+ K\int^t_0|h(s)|_U\int_{(\mathbb{T}^N)^2}\int_{\mathbb{R}^2}(f^{\pm}_1\bar{f}^{\pm}_2+\bar{f}^{\pm}_1{f}^{\pm}_2)\rho_{\gamma}(x-y)\psi_{\delta}(\xi-\zeta)dxdyd\xi d\zeta ds.
\end{eqnarray}
Hence, combining (\ref{P-14}) and (\ref{qeq-16}), we get
\begin{eqnarray}\notag
L_1&\leq&  K(\gamma+2\delta)(T+M)\\
\label{qeq-17}
&&\ + K\int^t_0|h(s)|_U\int_{(\mathbb{T}^N)^2}\int_{\mathbb{R}^2}(f^{\pm}_1\bar{f}^{\pm}_2+\bar{f}^{\pm}_1{f}^{\pm}_2)\rho_{\gamma}(x-y)\psi_{\delta}(\xi-\zeta)dxdyd\xi d\zeta ds.
\end{eqnarray}

Utilizing H\"{o}lder inequality, it yields
\begin{eqnarray}\notag
L_2&\leq&\int^t_0\int_{(\mathbb{T}^N)^2}\rho_{\gamma}(x-y)\int_{\mathbb{R}^2}{\gamma}_1(\xi,\zeta)\Big(\sum_{k\geq 1}| g^k(y,\zeta)-\tilde{g}^{k}(y,\zeta)|^2\Big)^{\frac{1}{2}}\Big(\sum_{k\geq 1}|h_k(s)|^2\Big)^{\frac{1}{2}}d \nu^1_{x,s}\otimes d\nu^2_{y,s}(\xi,\zeta) dxdyds\\ \notag
&\leq& \int^t_0|h(s)|_U\int_{(\mathbb{T}^N)^2}\rho_{\gamma}(x-y)\int_{\mathbb{R}^2}{\gamma}_1(\xi,\zeta)d^{\frac{1}{2}}(g,\tilde{g})(1+|\zeta|)^{\frac{m+1}{2}}d \nu^1_{x,s}\otimes d\nu^2_{y,s}(\xi,\zeta) dxdyds\\ \notag
&\leq& d^{\frac{1}{2}}(g,\tilde{g})\int^t_0|h(s)|_U\int_{(\mathbb{T}^N)^2}\rho_{\gamma}(x-y)\int_{\mathbb{R}^2}(1+|\zeta|)^{\frac{m+1}{2}}d \nu^1_{x,s}\otimes d\nu^2_{y,s}(\xi,\zeta) dxdyds\\ \notag
&\leq& d^{\frac{1}{2}}(g,\tilde{g})C(m)\int^t_0|h(s)|_U\int_{(\mathbb{T}^N)^2}\rho_{\gamma}(x-y)dxdyds\\
\notag
&\leq& d^{\frac{1}{2}}(g,\tilde{g})C(m)\int^t_0|h(s)|_Uds\\
\label{e-15}
&\leq&  d^{\frac{1}{2}}(g,\tilde{g})C(m)(T+M),
\end{eqnarray}
where we have used the property that the measures $\nu^{1}_{x,s}$ and $\nu^2_{y,s}$ vanish at the infinity.

Combining (\ref{e-16}), (\ref{qeq-17}) and (\ref{e-15}), it yields
\begin{eqnarray}\notag
R_3&\leq &  K(\gamma+2\delta)(T+M)+d^{\frac{1}{2}}(g,\tilde{g})C(m)(T+M)\\
\label{e-17}
&& + K\int^t_0|h(s)|_U\int_{(\mathbb{T}^N)^2}\int_{\mathbb{R}^2}(f^{\pm}_1\bar{f}^{\pm}_2+\bar{f}^{\pm}_1{f}^{\pm}_2)\rho_{\gamma}(x-y)\psi_{\delta}(\xi-\zeta)dxdyd\xi d\zeta ds.
\end{eqnarray}
Collecting (\ref{equ-39}), (\ref{e-13}) and (\ref{e-17}), we conclude that
\begin{eqnarray*}\notag
 &&\int_{(\mathbb{T}^N)^2}\int_{\mathbb{R}^2}\rho_\gamma (x-y)\psi_{\delta}(\xi-\zeta)(f^{\pm}_1(x,t,\xi)\bar{f}^{\pm}_2(y,t,\zeta)+\bar{f}^{\pm}_1(x,t,\xi){f}^{\pm}_2)(y,t,\zeta)d\xi d\zeta dxdy\\ \notag
&\leq& \int_{(\mathbb{T}^N)^2}\int_{\mathbb{R}^2}\rho_\gamma (x-y)\psi_{\delta}(\xi-\zeta)(f_{1,0}(x,\xi)\bar{f}_{2,0}(y,\zeta)+\bar{f}_{1,0}(x,\xi){f}_{2,0}(y,\zeta))d\xi d\zeta dxdy\\ \notag
&&+N_0\gamma^{-2}(\lambda^2+\delta^{2\alpha})\Big[1+\int^T_0\int_{(\mathbb{T}^N)^2}\int_{\mathbb{R}^2}(|\xi|^m+|\zeta|^m)d\nu^1_{x,s}\otimes d \nu^2_{y,s}(\xi,\zeta) dxdyds\Big]\\
\notag
&& +N_0\gamma^{-2}\int^T_0\int_{(\mathbb{T}^N)^2}\int_{\mathbb{R}^2}I_{|\xi|\geq R_{\lambda}}(1+|\xi|)^m d\nu^1_{x,s}\otimes d \nu^2_{y,s}(\xi,\zeta) dxdyds\\
\notag
&&+N_0\gamma^{-2}\int^T_0\int_{(\mathbb{T}^N)^2}\int_{\mathbb{R}^2}I_{|\zeta|\geq R_{\lambda}}(1+|\zeta|)^m d\nu^1_{x,s}\otimes d \nu^2_{y,s}(\xi,\zeta) dxdyds \\
&& +2 K(\gamma+2\delta)(T+M)+2d^{\frac{1}{2}}(g,\tilde{g})C(m)(T+M)\\
&& +2 K\int^t_0|h(s)|_U\int_{(\mathbb{T}^N)^2}\int_{\mathbb{R}^2}(f^{\pm}_1\bar{f}^{\pm}_2+\bar{f}^{\pm}_1{f}^{\pm}_2)\rho_{\gamma}(x-y)\psi_{\delta}(\xi-\zeta)dxdyd\xi d\zeta ds\\
&\leq& \int_{\mathbb{T}^N}\int_{\mathbb{R}}(f_{1,0}\bar{f}_{2,0}+\bar{f}_{1,0}{f}_{2,0})d\xi dx+\mathcal{E}_0(\gamma,\delta)\\
&& +N_0\gamma^{-2}(\lambda^2+\delta^{2\alpha})\Big[1+\int^T_0\int_{(\mathbb{T}^N)^2}\int_{\mathbb{R}^2}(|\xi|^m+|\zeta|^m)d\nu^1_{x,s}\otimes d \nu^2_{y,s}(\xi,\zeta) dxdyds\Big]\\
\notag
&& +N_0\gamma^{-2}\int^T_0\int_{(\mathbb{T}^N)^2}\int_{\mathbb{R}^2}I_{|\xi|\geq R_{\lambda}}(1+|\xi|)^m d\nu^1_{x,s}\otimes d \nu^2_{y,s}(\xi,\zeta) dxdyds\\
&&+N_0\gamma^{-2}\int^T_0\int_{(\mathbb{T}^N)^2}\int_{\mathbb{R}^2}I_{|\zeta|\geq R_{\lambda}}(1+|\zeta|)^m d\nu^1_{x,s}\otimes d \nu^2_{y,s}(\xi,\zeta) dxdyds \\
&& +2 K(\gamma+2\delta)(T+M)+2d^{\frac{1}{2}}(g,\tilde{g})C(m)(T+M)\\
&& +2 K\int^t_0|h(s)|_U\int_{(\mathbb{T}^N)^2}\int_{\mathbb{R}^2}(f^{\pm}_1\bar{f}^{\pm}_2+\bar{f}^{\pm}_1{f}^{\pm}_2)\rho_{\gamma}(x-y)\psi_{\delta}(\xi-\zeta)dxdyd\xi d\zeta ds,
\end{eqnarray*}
where $\mathcal{E}_0(\gamma,\delta)$ is defined by (\ref{qq-3}).
%
%

Utilizing Gronwall inequality and by $\int^t_0|h(s)|_Uds\leq (T+M)/2$, we obtain
\begin{eqnarray}\notag
 &&\int_{(\mathbb{T}^N)^2}\int_{\mathbb{R}^2}\rho_\gamma (x-y)\psi_{\delta}(\xi-\zeta)(f^{\pm}_1(x,t,\xi)\bar{f}^{\pm}_2(y,t,\zeta)+\bar{f}^{\pm}_1(x,t,\xi){f}^{\pm}_2(y,t,\zeta))d\xi d\zeta dxdy\\ \notag
&\leq& e^{ K(T+M)}\Big[\int_{\mathbb{T}^N}\int_{\mathbb{R}}(f_{1,0}\bar{f}_{2,0}+\bar{f}_{1,0}{f}_{2,0})d\xi dx+\mathcal{E}_0(\gamma,\delta)\\ \notag
&&+N_0\gamma^{-2}(\lambda^2+\delta^{2\alpha})\Big[1+\int^T_0\int_{(\mathbb{T}^N)^2}\int_{\mathbb{R}^2}(|\xi|^m+|\zeta|^m)d\nu^1_{x,s}\otimes d \nu^2_{y,s}(\xi,\zeta) dxdyds\Big]\\
\notag
&& +N_0\gamma^{-2}\int^T_0\int_{(\mathbb{T}^N)^2}\int_{\mathbb{R}^2}I_{|\xi|\geq R_{\lambda}}(1+|\xi|)^m d\nu^1_{x,s}\otimes d \nu^2_{y,s}(\xi,\zeta) dxdyds\\ \notag
&&+N_0\gamma^{-2}\int^T_0\int_{(\mathbb{T}^N)^2}\int_{\mathbb{R}^2}I_{|\zeta|\geq R_{\lambda}}(1+|\zeta|)^m d\nu^1_{x,s}\otimes d \nu^2_{y,s}(\xi,\zeta) dxdyds \\
\label{qeq-18}
&& +2 K(\gamma+2\delta)(T+M)+2d^{\frac{1}{2}}(g,\tilde{g})C(m)(T+M)\Big].
\end{eqnarray}

%

Then, it follows that
\begin{eqnarray}\notag
&&\int_{\mathbb{T}^N}\int_{\mathbb{R}}(f^{\pm}_1(x,t,\xi)\bar{f}^{\pm}_2(x,t,\xi)+\bar{f}^{\pm}_1(x,t,\xi){f}^{\pm}_2(x,t,\xi))dxd\xi\\ \notag
&=& \int_{(\mathbb{T}^N)^2}\int_{\mathbb{R}^2}\rho_\gamma (x-y)\psi_{\delta}(\xi-\zeta)(f^{\pm}_1(x,t,\xi)\bar{f}^{\pm}_2(y,t,\zeta)+\bar{f}^{\pm}_1(x,t,\xi){f}^{\pm}_2(y,t,\zeta))d\xi d\zeta dxdy+\mathcal{E}_t(\gamma,\delta)\\ \notag
&\leq& \mathcal{E}_t(\gamma,\delta)+e^{ K(T+M)}\Big[\int_{\mathbb{T}^N}\int_{\mathbb{R}}(f_{1,0}\bar{f}_{2,0}+\bar{f}_{1,0}{f}_{2,0})d\xi dx+\mathcal{E}_0(\gamma,\delta)\\ \notag
&&+N_0\gamma^{-2}(\lambda^2+\delta^{2\alpha})\Big[1+\int^T_0\int_{(\mathbb{T}^N)^2}\int_{\mathbb{R}^2}(|\xi|^m+|\zeta|^m)d\nu^1_{x,s}\otimes d \nu^2_{y,s}(\xi,\zeta) dxdyds\Big]\\
\notag
&&+N_0\gamma^{-2}\int^T_0\int_{(\mathbb{T}^N)^2}\int_{\mathbb{R}^2}I_{|\xi|\geq R_{\lambda}}(1+|\xi|)^m d\nu^1_{x,s}\otimes d \nu^2_{y,s}(\xi,\zeta) dxdyds\\
\notag
&&+N_0\gamma^{-2}\int^T_0\int_{(\mathbb{T}^N)^2}\int_{\mathbb{R}^2}I_{|\zeta|\geq R_{\lambda}}(1+|\zeta|)^m d\nu^1_{x,s}\otimes d \nu^2_{y,s}(\xi,\zeta) dxdyds \\
\label{equ-39-1}
&& +2 K(\gamma+2\delta)(T+M)+2d^{\frac{1}{2}}(g,\tilde{g})C(m)(T+M)\Big],
\end{eqnarray}
where $\mathcal{E}_0(\gamma,\delta), \mathcal{E}_t(\gamma,\delta)\rightarrow 0$, as $\gamma,\delta\rightarrow 0$.

When $A=\tilde{A}, g=\tilde{g}$, we can take $\lambda=0$ and $R_{\lambda}=\infty$. Then, we deduce from (\ref{equ-39-1}) that
\begin{eqnarray}\notag
&&\int_{\mathbb{T}^N}\int_{\mathbb{R}}(f^{\pm}_1(x,t,\xi)\bar{f}^{\pm}_2(x,t,\xi)+\bar{f}^{\pm}_1(x,t,\xi){f}^{\pm}_2(x,t,\xi))dxd\xi\\ \notag
&\leq& \mathcal{E}_t(\gamma,\delta)+e^{ K(T+M)}\Big[\int_{\mathbb{T}^N}\int_{\mathbb{R}}(f_{1,0}\bar{f}_{2,0}+\bar{f}_{1,0}{f}_{2,0})d\xi dx+\mathcal{E}_0(\gamma,\delta)\\
\notag
&& +N_0C(T)\gamma^{-2}\delta^{2\alpha}+2 K(\gamma+2\delta)(T+M)\Big],
\end{eqnarray}
where we have used the property that the measures $\nu^1_{x,s}$ and $\nu^2_{y,s}$ vanish at the infinity.

Taking $\delta=\gamma^{\frac{3}{2\alpha}}$, we get
\begin{eqnarray}\notag
&&\int_{\mathbb{T}^N}\int_{\mathbb{R}}(f^{\pm}_1(x,t,\xi)\bar{f}^{\pm}_2(x,t,\xi)+\bar{f}^{\pm}_1(x,t,\xi){f}^{\pm}_2(x,t,\xi))dxd\xi\\ \notag
&\leq& \mathcal{E}_t(\gamma,\delta)+e^{ K(T+M)}\Big[\int_{\mathbb{T}^N}\int_{\mathbb{R}}(f_{1,0}\bar{f}_{2,0}+\bar{f}_{1,0}{f}_{2,0})d\xi dx+\mathcal{E}_0(\gamma,\delta)\\
\label{eeee}
&& +N_0C(T)\gamma+2 K(\gamma+2\gamma^{\frac{3}{2\alpha}})(T+M)\Big].
\end{eqnarray}
Letting $\gamma\rightarrow 0$, by (\ref{eeee}), it yields
\begin{eqnarray}\notag
&&\int_{\mathbb{T}^N}\int_{\mathbb{R}}(f^{\pm}_1(x,t,\xi)\bar{f}^{\pm}_2(x,t,\xi)+\bar{f}^{\pm}_1(x,t,\xi){f}^{\pm}_2(x,t,\xi))dxd\xi\\ \label{eqqq}
&\leq& e^{ K(T+M)}\int_{\mathbb{T}^N}\int_{\mathbb{R}}(f_{1,0}\bar{f}_{2,0}+\bar{f}_{1,0}{f}_{2,0})d\xi dx.
\end{eqnarray}
The reduction of generalized solutions to kinetic solutions is the same as the proof of Theorem 15 in \cite{D-V-1}, we therefore omit it here.
Suppose $u:=u^h$ and $\tilde{u}:=\tilde{u}^h$ are kinetic solutions to (\ref{P-2}) and (\ref{e-6}), respectively. By using the following identities
\begin{eqnarray}\label{equ-1}
\int_{\mathbb{R}}I_{u>\xi}\overline{I_{\tilde{u}>\xi}}d\xi=(u-\tilde{u})^+,
\quad
\int_{\mathbb{R}}\overline{I_{u>\xi}}I_{\tilde{u}>\xi}d\xi=(u-\tilde{u})^-,
\end{eqnarray}
we deduce from (\ref{eqqq}) with $f_1=I_{u>\xi}, f_2=I_{\tilde{u}>\xi}, f_{1,0}=I_{u_0>\xi}, f_{2,0}=I_{\tilde{u}_0>\xi}$ that
\begin{eqnarray*}
\|u(t)-\tilde{u}(t)\|_{L^1(\mathbb{T}^N)}
\leq e^{K(T+M)}\|u_0-\tilde{u}_0\|_{L^1(\mathbb{T}^N)}.
\end{eqnarray*}
We complete the proof of (1).

Now, it remains to prove (2). We deduce from (\ref{equ-39-1}) with $f_1=I_{u>\xi}, f_2=I_{\tilde{u}>\xi}, f_{1,0}=I_{u_0>\xi}, f_{2,0}=I_{\tilde{u}_0>\xi}$ that
\begin{eqnarray}\notag
\|u(t)-\tilde{u}(t)\|_{L^1(\mathbb{T}^N)}
&\leq& \mathcal{E}_t(\gamma,\delta)+e^{K(T+M)}\Big[\|u_0-\tilde{u}_0\|_{L^1(\mathbb{T}^N)} +\mathcal{E}_0(\gamma,\delta)\\ \notag
&& +N_0\gamma^{-2}(\lambda^2+\delta^{2\alpha})\Big(1+\|u\|^m_{L^m([0,T]\times \mathbb{T}^N)}+\|\tilde{u}\|^m_{L^m([0,T]\times \mathbb{T}^N)}\Big)\\
\notag
&& +N_0\gamma^{-2}\|I_{|u|\geq R_{\lambda}}(1+|u|)\|^m_{L^m([0,T]\times \mathbb{T}^N)}+N_0\gamma^{-2}\|I_{|\tilde{u}|\geq R_{\lambda}}(1+|\tilde{u}|)\|^m_{L^m([0,T]\times \mathbb{T}^N)} \\
\label{e-18}
&& +2 K(\gamma+2\delta)(T+M)+2d^{\frac{1}{2}}(g,\tilde{g})C(m)(T+M)\Big],
\end{eqnarray}
which implies (2) holds.
\end{proof}

Now, we are in a position to establish the uniqueness.

\begin{thm}\label{thm-8}
Assume $(A, g, u_0)$ satisfy Hypotheses H, then the skeleton equation (\ref{P-2}) has at most one kinetic solution.
\end{thm}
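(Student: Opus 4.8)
The plan is to obtain uniqueness as an immediate corollary of the quantitative comparison estimate already established in Theorem~\ref{thm-2}(1). Fix $h\in S_M$ and suppose $u_1$ and $u_2$ are two kinetic solutions of the skeleton equation (\ref{P-2}) with the same initial datum $u_0$ (in the sense of the definition of kinetic solution to (\ref{P-2}) given above). The first step is to regard $u_2$ as a kinetic solution of the auxiliary equation (\ref{e-6}) corresponding to the particular choice $\tilde A=A$, $\tilde g=g$, $\tilde u_0=u_0$; with this choice one has $\tilde a=a$ and $\tilde\Psi=\Psi$, equation (\ref{e-6}) is literally (\ref{P-2}), and the triple $(\tilde A,\tilde g,\tilde u_0)$ satisfies Hypothesis~H because $(A,g,u_0)$ does. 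Hence both hypotheses required to invoke Theorem~\ref{thm-2} are in force for the pair $(u_1,u_2)$.

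Next I would simply apply Theorem~\ref{thm-2}(1) to $u:=u_1$ and $\tilde u:=u_2$. Since $A=\tilde A$ and $g=\tilde g$, estimate (\ref{e-20}) yields, for a.e. $t\in[0,T]$,
\[
\|u_1(t)-u_2(t)\|_{L^1(\mathbb{T}^N)}\le e^{K(T+M)}\,\|u_0-u_0\|_{L^1(\mathbb{T}^N)}=0 ,
\]
so that $u_1(t)=u_2(t)$ in $L^1(\mathbb{T}^N)$ for a.e. $t\in[0,T]$, i.e. $u_1=u_2$ as elements of $L^1([0,T]\times\mathbb{T}^N)$. This is precisely the assertion of Theorem~\ref{thm-8}.

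The only point that genuinely has to be verified — and which is in fact already used inside the proof of Theorem~\ref{thm-2} — is that a kinetic solution $u^h$ in the above sense induces a generalized kinetic solution whose Young measure is the equilibrium measure $\nu^h_{x,t}=\delta_{u^h(x,t)}$, so that the doubling-of-variables identity of Proposition~\ref{prp-1} and the subsequent treatment of the terms $R_1,R_2,R_3$ and of the error $\mathcal{E}_t(\gamma,\delta)$ are applicable; this reduction is standard and carried out exactly as in the proof of Theorem~15 of \cite{D-V-1}. Consequently there is no real obstacle left: the substantive work (the comparison principle via doubling of variables, the vanishing-at-infinity of the kinetic measures and Young measures, and the limit $\gamma,\delta\to0$) has already been done in Proposition~\ref{prp-1} and Theorem~\ref{thm-2}, and the proof of Theorem~\ref{thm-8} consists solely of specializing those results to identical coefficients and identical initial data.
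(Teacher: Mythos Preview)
Your proposal is correct and follows essentially the same approach as the paper: both proofs simply specialize the comparison estimate (\ref{e-20}) from Theorem~\ref{thm-2}(1) to the case $\tilde A=A$, $\tilde g=g$, $\tilde u_0=u_0$, obtaining $\|u_1(t)-u_2(t)\|_{L^1(\mathbb{T}^N)}=0$ for a.e.\ $t$. Your additional remarks about the reduction of kinetic solutions to generalized kinetic solutions are accurate but already absorbed into the proof of Theorem~\ref{thm-2} in the paper.
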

\begin{proof}
Taking $u_0=\tilde{u}_0$ in (\ref{e-20}), we get
\begin{eqnarray}\notag
\|u(t)-\tilde{u}(t)\|_{L^1(\mathbb{T}^N)}
=0 , \quad a.e. \ t\in [0,T],
\end{eqnarray}
which implies the uniqueness of solutions to (\ref{P-2}).
\end{proof}

Now, we devote to proving the existence of kinetic solution to (\ref{P-2}).
\begin{thm}\label{thm-1}
(Existence) Assume $(A, g, u_0)$ satisfy Hypothesis H, then for any $T>0$, (\ref{P-2}) has a kinetic solution $u^h$ on the time interval $[0,T]$.
\end{thm}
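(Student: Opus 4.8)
The plan is to construct $u^h$ by a vanishing-viscosity approximation in the spirit of the existence proof of \cite{DGG}, using that for fixed $h\in S_M$ equation (\ref{P-2}) is a purely deterministic degenerate quasilinear parabolic problem, so only PDE energy estimates are needed. First I would regularize the data: for $n\in\mathbb{N}$ pick a uniformly parabolic, globally Lipschitz $A_n$ with $\frac{1}{n}\le A_n'\le C_n$ and $A_n\to A$ uniformly on compacts; mollify $g$ in $(x,u)$ to smooth $g_n$ still satisfying (\ref{equ-28})--(\ref{equ-29}) uniformly in $n$ with $d(g_n,g)\to0$; and mollify $u_0$ to $u_{0,n}\in C^\infty(\mathbb{T}^N)$ with $u_{0,n}\to u_0$ in $L^{m+1}(\mathbb{T}^N)$. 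Since $t\mapsto|h(t)|_U\in L^2([0,T])$ and, by (\ref{equ-28})--(\ref{equ-29}), the drift $\sum_{k\ge1}g^k_n(x,u)h_k(t)$ is Lipschitz in $u$ with an $L^2_t$ coefficient, classical quasilinear parabolic theory yields a unique weak solution $u_n$ of $\partial_t u_n=\Delta A_n(u_n)+\sum_{k\ge1}g^k_n(x,u_n)h_k(t)$, $u_n(0)=u_{0,n}$, regular enough that $\Psi_n(u_n)\in L^2([0,T];H^1(\mathbb{T}^N))$; in particular $u_n$ is a kinetic solution of the skeleton equation with data $(A_n,g_n,u_{0,n})$ and some kinetic measure $m_n\in\mathcal{M}^+_0$ (cf.\ \cite{D-V-1,DGG}).

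Next I would obtain bounds uniform in $n$. Testing with $\eta'(u_n)$, $\eta(r)\simeq|r|^{m+1}$, and using (\ref{equ-28}), Young's inequality, $\int_0^T|h(s)|_U\,ds\le(T+M)/2$ and Gronwall's lemma gives
\[
\operatorname*{ess\,sup}_{t\in[0,T]}\|u_n(t)\|_{L^{m+1}(\mathbb{T}^N)}^{m+1}+\int_0^T\!\!\int_{\mathbb{T}^N}|\nabla\Psi(u_n)|^2\,dx\,dt+m_n(\mathbb{T}^N\times[0,T]\times\mathbb{R})\le C_m
\]
with $C_m$ independent of $n$, and the first term also forces the uniform tail decay $\sup_n m_n(\mathbb{T}^N\times[0,T]\times B_R^c)\to0$ as $R\to\infty$. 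Along a subsequence, $f_n:=I_{u_n>\xi}$ converges weak-$\ast$ in $L^\infty$ to a kinetic function $f$ whose Young measure $\nu=-\partial_\xi f$ satisfies, by lower semicontinuity, $\int_0^T\!\int_{\mathbb{T}^N}\!\int_{\mathbb{R}}|\xi|^{m+1}\,d\nu_{x,t}(\xi)\,dx\,dt\le C_m$, while $m_n\rightharpoonup m\in\mathcal{M}^+_0$ and $f_{0,n}\to f_0=I_{u_0>\xi}$.

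I would then pass to the limit in the weak formulation (\ref{P-4}) written for $(A_n,g_n,u_{0,n})$. All terms are routine except the drift: there one splits $g^k_n=(g^k_n-g^k)+g^k$, bounds the first summand by $d^{1/2}(g_n,g)$ times an $L^2_t$ factor as in the proof of (\ref{e-15}), and for the second uses the weak-$\ast$ convergence of $\nu^n$ against the fixed integrand $g^k\varphi h_k$, which lies in $L^1$ in $(x,\xi)$ and $L^2$ in $t$. This identifies $f$ as a generalized kinetic solution of (\ref{P-2}) with datum $f_0=I_{u_0>\xi}$. To conclude, I invoke the rigidity already contained in the proof of Theorem \ref{thm-2}: applying inequality (\ref{eqqq}) with $f_1=f_2=f$, $f_{1,0}=f_{2,0}=f_0$ (and $A=\tilde A$, $g=\tilde g$), and using $f_0\bar f_0\equiv0$, one gets $\int_{\mathbb{T}^N}\int_{\mathbb{R}}f^\pm\bar f^\pm\,d\xi\,dx=0$ for a.e.\ $t$, hence $f^\pm\in\{0,1\}$; by monotonicity in $\xi$ and $f=f^+=f^-$ a.e.\ in time, $f=I_{u^h>\xi}$ for some measurable $u^h$ which, by the bound just obtained, satisfies (\ref{e-19}), so $u^h$ is a kinetic solution of (\ref{P-2}).

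The main obstacle is the uniform-in-$n$ control together with the limit in the nonlinear drift: because $a^2=A'$ is neither bounded nor Lipschitz, one can only estimate $\nabla\Psi(u_n)$ and not $\nabla u_n$, so the whole energy argument must be routed through $\Psi(u_n)$; and because $h$ is merely square-integrable in time there is no time-equicontinuity to exploit, so convergence of the drift term has to be extracted purely from the weak-$\ast$ convergence of the Young measures tested against an $L^2_t$ weight, while checking that the limiting measure $m$ still lies in $\mathcal{M}^+_0$ and that no mass is lost in the dissipation term $|\nabla\Psi|^2\,d\nu$ — for which lower semicontinuity, which is all the definition of a generalized solution requires, suffices.
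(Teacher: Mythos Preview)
Your proposal is correct in outline, but it takes a genuinely different route from the paper.

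The paper does \emph{not} pass through generalized kinetic solutions and rigidity. Instead, after introducing the regularized data $(A_n,g_n,u_{0,n})$ and obtaining the uniform energy bounds (\ref{e-24-1})--(\ref{e-25-1}) exactly as you do, it applies the quantitative stability estimate of Theorem~\ref{thm-2}\,(2) to the pair $(u_n,u_{n'})$ with $\delta=\gamma^{3/(2\alpha)}$ and $\lambda=8/n$ (so that $R_\lambda\ge n$ by (\ref{e-29})), thereby showing directly that $(u_n)_{n\ge1}$ is \emph{Cauchy in $L^1([0,T]\times\mathbb{T}^N)$}. With strong $L^1$ (hence a.e.) convergence in hand, the limit passage is carried out in the \emph{entropy} formulation (Definition~\ref{dfn-4-1}): each nonlinear term $\eta(u_n)$, $q_{n,\eta}(u_n)$, $\eta'(u_n)g^k_n(u_n)h_k$ converges by dominated convergence and (\ref{ee-1})--(\ref{ee-2}), while the dissipation term is handled by weak lower semicontinuity of $\partial_i\Psi_{n,\sqrt{\eta''}}(u_n)$ in $L^2$. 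The equivalence of entropy and kinetic solutions (Proposition~\ref{prp-6}) then gives the kinetic solution.

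Your approach---weak-$\ast$ compactness of $f_n=I_{u_n>\xi}$ and of $m_n$, identification of the limit as a generalized kinetic solution, then reduction to an equilibrium via (\ref{eqqq}) with $f_1=f_2=f$---is the Debussche--Vovelle strategy and is also valid here; indeed Proposition~\ref{prp-1} is stated for generalized solutions, so (\ref{eqqq}) is available. The trade-off is that your limit passage in the dissipation and drift is more delicate (you must absorb the dissipation defect into $m$, and rely on the compact $\xi$-support of $\varphi$ to justify testing against unbounded $g^k$), and you do not obtain \emph{strong} convergence $u_n\to u^h$ in $L^1$ as a byproduct. The paper's Cauchy argument does yield this strong convergence for free, and that is precisely what is reused in Proposition~\ref{prp-4} to get the uniform-in-$h$ convergence $\sup_{h\in S_M}\|u^h_n-u^h\|_{L^1}\to0$, a key ingredient for the continuity of $\mathcal{G}^0$.
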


\begin{proof}
By the similar method as Proposition \ref{prp-6}, we get the equivalence between entropy solution and kinetic solution of (\ref{P-2}), hence, it suffices to prove the existence of entropy solutions. For technical reasons,
we follow the spirit of Dareiotis et al.  in  \cite{DGG} to introduce the approximations of the coefficients of (\ref{P-2}).
Define
\begin{eqnarray}\label{ee-4}
g_n:=\rho^{\otimes (N+1)}_{1/n}\ast g(\cdot, -n\vee (\cdot\wedge n)), \quad u_{0,n}:=\rho^{\otimes N}_{1/n}\ast (-n\vee (u_0(\cdot)\wedge n)).
\end{eqnarray}
It's easy to verify that if $g$ and $u_0$ satisfy Hypotheses H with a constant $K\geq 1$, then the same holds for $g_n$ and $u_{0,n}$ with constant $2K$. It is also clear that $g_n\in C^{\infty}(\mathbb{T}^N\times \mathbb{R})$ with one-order derivatives bounded by $C(n,K)$.
Also, $u_{0,n}$ is a bounded $C^k(\mathbb{T}^N)-$valued random variable for any $k\in \mathbb{N}$ and
\begin{eqnarray}\label{e-21}
d(g, g_n)\rightarrow 0,\quad \|u_0-u_{0,n}\|^{m+1}_{L^{m+1}(\mathbb{T}^N)}\rightarrow 0, \ {\rm{as}} \ n\rightarrow \infty.
\end{eqnarray}
Now, we focus on the approximation of $A$. Taking a symmetric mollifier $\bar{\rho}_{\theta}$ supported on $[-\theta, \theta]$, for instance, $\bar{\rho}_{\theta}(r):=\int_{\mathbb{R}} \rho_{\theta}(r+s)\rho_{\theta}(s)ds$. Set $\theta_n:=\sup\{\theta\in (0,1]:|a(r)-a(\zeta)|\leq 1/n, \forall |r|\leq 3n, |\zeta-r|\leq 3\theta\}>0$. Then, define
\begin{eqnarray}\label{e-22}
A_n(r)=\int^r_0a^2_n(\zeta)d\zeta, \quad \Psi_n(r)=\int^r_0a_n(s)ds, \quad a_n(r)=\bar{\rho}_{\theta_n}\ast(2/n+a(3\theta_n\vee |r|\wedge3n)).
\end{eqnarray}
Referring to Proposition 5.1 in \cite{DGG}, we know that for all $n$, $a_n\in C^{\infty}(\mathbb{R})$ satisfying $a_n(r)\geq 2/n$,
\begin{eqnarray}\label{e-29}
\sup_{|r|\leq n}|a(r)-a_n(r)|&\leq& 4/n,\\
\label{e-23}
|a_n(r)|\leq C(n,m,K), \quad |a'_n(r)|&\leq& 2K|r|^{\frac{m-3}{2}}, \quad \forall r\in \mathbb{R}.
\end{eqnarray}
Then, by (\ref{e-22}), it yields $A_n\in C^{\infty}(\mathbb{R})$ with one-order derivatives bounded by $C(n,m, K)$.
Moreover, if $A$ satisfies Hypotheses H with a constant $K\geq 1$, then $A_n$ satisfy Hypotheses H with constant $3K$.

Based on $(A_n, g_n, u_{0,n})$ defined above, for $h(t)=\sum_{k\geq 1}h_k(t)e_k$, let us consider the following approximation
\begin{eqnarray}\label{eqq-1-1}
\left\{
  \begin{array}{ll}
    du^{h}_n(t,x)=\Delta A_n(u^{h}_n(t,x))dt+\sum_{k\geq 1}g^k_n(x,u^{h}_n(t,x)) h_k(t)dt,\\
    u^{h}_n=u_{0,n}.
  \end{array}
\right.
\end{eqnarray}
Note that $\Delta A_n(u^{h}_n(t,x))=div(a^2_n(u^{h}_n(t,x))\nabla u^{h}_n(t,x))$ and $\frac{4}{n^2}\leq a^2_n(u^{h}_n(t,x)\leq C(n,m,K)$. Referring to Section 4 in \cite{DZZ} with $B=0, A=a^2_n$, we know that
 for each $n$, (\ref{eqq-1-1}) has a unique solution $u^{h}_n\in C([0,T];H)\cap L^2([0,T];H^1)$. Moreover, using integration by parts for $\|u^{h}_n(t)\|^2_H$ and $\|u^{h}_n(t)\|^{m+1}_{L^{m+1}(\mathbb{T}^N)}$, we get the following energy estimates uniformly in $n$:
\begin{eqnarray*}
\sup_{t\in[0,T]}\|u^{h}_n(t)\|^p_H +\int^T_0\|\nabla \Psi_n( u^{h}_n(s))\|^p_Hds&\leq& C(M,K,T, p, \|u_{0,n}\|_H),\\
\sup_{t\in[0,T]}\|u^{h}_n(t)\|^{m+1}_{L^{m+1}} &\leq& C(M,K,T,m, \|u_{0,n}\|_{L^{m+1}}),
\end{eqnarray*}
for all $p\geq 2$. Applying  integration by parts for the function $u\rightarrow\int_{\mathbb{T}^N}\int^u_0A_n(s)ds$ and using (\ref{e-24-1})-(\ref{e-25-1}), it yields
\[
\int^T_0\|\nabla A_n( u^{h}_n(s))\|^2_Hds\leq C(M,K,T,m, \|u_{0,n}\|_{L^{m+1}}).
\]
Note that $u_{0,n}$ are bounded by $n$, which implies that the right hand side of the above inequalities are finite. Moreover, since $u_{0,n}\leq u_0$, we get for all $p\geq 2$,
\begin{eqnarray}\label{e-24-1}
\sup_{t\in[0,T]}\|u^{h}_n(t)\|^p_H +\int^T_0\|\nabla \Psi_n( u^{h}_n(s))\|^p_Hds&\leq& C(M,K,T, p, \|u_{0}\|_H),\\
\label{e-25-1}
\sup_{t\in[0,T]}\|u^{h}_n(t)\|^{m+1}_{L^{m+1}}+ \int^T_0\|\nabla A_n( u^{h}_n(s))\|^2_Hds&\leq& C(M,K,T,m, \|u_{0}\|_{L^{m+1}}).
\end{eqnarray}
Since $a_n\geq \frac{2}{n}>0$, we have $|\nabla u^{h}_n|\leq N(n)|\nabla \Psi_n( u^{h}_n)|$, then by (\ref{e-24-1}), it yields
\begin{eqnarray}\label{e-26-1}
\int^T_0\|\nabla u^{h}_n(t)\|^p_Hdt\leq N(n)C(M,K,T, p, \|u_{0}\|_H).
\end{eqnarray}
In the following, for the sake of convenience, denote $u_n=u^{h}_n$ and $u=u^{h}$.

We will show that $(u_n)_{n\in \mathbb{N}}$ is a Cauchy sequence in $L_1([0,T];L^1(\mathbb{T}^N))$. For any $\alpha\in (0,1\wedge\frac{m}{2})$, and $n\leq n'$, we apply Theorem \ref{thm-2} to $u_n$ and $u_{n'}$.  Setting $\delta=\gamma^{\frac{3}{2\alpha}}$ and $\lambda=\frac{8}{n}$, then we deduce from (\ref{e-29}) that $R_{\lambda}\geq n$. Further, using (\ref{e-25-1}), it yields for a.e. $t\in [0,T]$, it holds that
\begin{eqnarray*}
\|u_n(t)-{u}_{n'}(t)\|_{L^1(\mathbb{T}^N)}
&\leq& \mathcal{E}_t(\gamma,\delta)+e^{K(T+M)}\Big[\|u_{0,n}-{u}_{0,n'}\|_{L^1(\mathbb{T}^N)} +\mathcal{E}_0(\gamma,\delta)\\ \notag
&& +N_0(\gamma^{-2}n^{-2}+\gamma)\Big(1+\|u_n\|^m_{L^m([0,T]\times \mathbb{T}^N)}+\|u_{n'}\|^m_{L^m([0,T]\times \mathbb{T}^N)}\Big)\\
\notag
&& +N_0\gamma^{-2}\|I_{|u_n|\geq n}(1+|u_n|)\|^m_{L^m([0,T]\times \mathbb{T}^N)}+N_0\gamma^{-2}\|I_{|u_{n'}|\geq n}(1+|u_{n'}|)\|^m_{L^m([0,T]\times \mathbb{T}^N)} \\
&& +2 K(\gamma+2\gamma^{\frac{3}{2\alpha}})(T+M)+2d^{\frac{1}{2}}(g_n,g_{n'})C(m)(T+M)\Big]\\
&\leq& \mathcal{E}_t(\gamma,\gamma^{\frac{3}{2\alpha}})+e^{K(T+M)}\Big[\|u_{0,n}-{u}_{0}\|_{L^1(\mathbb{T}^N)} +\|u_{0}-{u}_{0,n'}\|_{L^1(\mathbb{T}^N)} +\mathcal{E}_0(\gamma,\gamma^{\frac{3}{2\alpha}}) +N_0(\gamma^{-2}n^{-2}+\gamma)\\
\notag
&& +N_0\gamma^{-2}\|I_{|u_n|\geq n}(1+|u_n|)\|^m_{L^m([0,T]\times \mathbb{T}^N)}+N_0\gamma^{-2}\|I_{|u_{n'}|\geq n}(1+|u_{n'}|)\|^m_{L^m([0,T]\times \mathbb{T}^N)} \\
&& +2 K(\gamma+2\gamma^{\frac{3}{2\alpha}})(T+M)+2d^{\frac{1}{2}}(g_n,g)C(m)(T+M)+2d^{\frac{1}{2}}(g,g_{n'})C(m)(T+M)\Big]\\
&=:&M(\gamma)+e^{K(T+M)}\Big[\|u_{0,n}-{u}_{0}\|_{L^1(\mathbb{T}^N)} +\|u_{0}-{u}_{0,n'}\|_{L^1(\mathbb{T}^N)}+ N_0\gamma^{-2}n^{-2}\\
&& +N_0\gamma^{-2}\|I_{|u_n|\geq n}(1+|u_n|)\|^m_{L^m([0,T]\times \mathbb{T}^N)}+N_0\gamma^{-2}\|I_{|u_{n'}|\geq n}(1+|u_{n'}|)\|^m_{L^m([0,T]\times \mathbb{T}^N)} \\
&& +2d^{\frac{1}{2}}(g_n,g)C(m)(T+M)+2d^{\frac{1}{2}}(g,g_{n'})C(m)(T+M)\Big],
\end{eqnarray*}
where $M(\gamma)=\mathcal{E}_t(\gamma,\gamma^{\frac{3}{2\alpha}})+e^{K(T+M)}\Big[\mathcal{E}_0(\gamma,\gamma^{\frac{3}{2\alpha}}) +N_0\gamma+2 K(\gamma+2\gamma^{\frac{3}{2\alpha}})(T+M)\Big]\rightarrow 0$ as $\gamma, \delta\rightarrow 0$ and the constant $N_0$ is independent of $\gamma,\delta,\lambda$.

For any $\iota>0$, let $\gamma>0$ be such that $M(\gamma)<\iota$. By (\ref{e-21}), we can choose $n_0$ big enough such that for $n_0\leq n\leq n'$,
\begin{eqnarray*}
&&\|u_{0,n}-{u}_{0}\|_{L^1(\mathbb{T}^N)} +\|u_{0}-{u}_{0,n'}\|_{L^1(\mathbb{T}^N)}+ N_0\gamma^{-2}n^{-2}\\
&&+2d^{\frac{1}{2}}(g_n,g)C(m)(T+M)+2d^{\frac{1}{2}}(g,g_{n'})C(m)(T+M)\leq 5\iota
\end{eqnarray*}
Due to the uniform integrability of $(1+|u_n|)^m$, for such $n_0$, we also have
\[
N_0\gamma^{-2}\|I_{|u_n|\geq n}(1+|u_n|)\|^m_{L^m([0,T]\times \mathbb{T}^N)}+N_0\gamma^{-2}\|I_{|u_{n'}|\geq n}(1+|u_{n'}|)\|^m_{L^m([0,T]\times \mathbb{T}^N)}\leq \iota.
\]
Hence, for $n_0\leq n\leq n'$, we get for a.e. $t\in [0,T]$,
\[
\|u_n(t)-{u}_{n'}(t)\|_{L^1(\mathbb{T}^N)}\leq 7\iota,
\]
which implies that $(u_n)_{n\in \mathbb{N}}$ converges in $L_1([0,T];L^1(\mathbb{T}^N))$. Moreover, by passing to a subsequence, we may assume that
\begin{eqnarray}\label{ee-1}
\lim_{n\rightarrow \infty} u_n=u, \quad a.e.\  (t,x)\in [0,T]\times \mathbb{T}^N.
\end{eqnarray}
In addition, it follows from (\ref{e-25-1}) that for any $q<m+1$,
\begin{eqnarray}\label{ee-2}
(|u_n(t,x)|^q)_{n\geq 1}\ {\rm{is\ uniformly\ integrable\ on\ }}  [0,T]\times \mathbb{T}^N.
\end{eqnarray}
Taking $q=2$ in (\ref{ee-2}), we get $u_n\rightarrow u$ strongly in $L^2([0,T];L^2(\mathbb{T}^N))$.

Next, we prove that $u$ is an entropy solution of (\ref{P-2}). Utilizing (\ref{e-25-1}) and Fatou lemma, we deduce that (i) of Definition \ref{dfn-4-1} holds.

Let $f\in C_b(\mathbb{R})$ and $\eta$ as in the Definition \ref{dfn-4-1}.
Recall the entropy fucntion
\begin{eqnarray*}
\Psi_f(r):=\int^r_0f(s)a(s)ds,\quad \forall \ f\in C(\mathbb{R}),\quad q_{\eta}\ {\rm{is\ any\ function\ satisfying\ }} q'_{\eta}(\xi)=\eta'(\xi)a^2(\xi).
\end{eqnarray*}
Analogously, we define $\Psi_{n,f}, q_{n,\eta}$ similar to the above with $a$ replaced by $a_n$. For each $n$, we have $\Psi_{n,f}(u_n)\in L_2([0,T];H^1(\mathbb{T}^N))$ and $\partial_i \Psi_{n,f}(u_n)=f(u_n)\partial_i \Psi_{n}(u_n)$. Also, we have $\Psi_{n,f}(r)\leq \|f\|_{L^{\infty}}3K |r|^{\frac{m+1}{2}}$ for all $r\in \mathbb{R}$, which combined with (\ref{e-24-1}) and (\ref{e-25-1}) gives that
\[
\sup_{n}\int^T_0\|\Psi_{n,f}(u_n)\|^2_{H^1(\mathbb{T}^N)}<\infty.
\]
Hence, for a subsequence, we have
$\Psi_{n,f}(u_n)\rightharpoonup v_f$, $\Psi_{n}(u_n)\rightharpoonup v$ for some $v_f, v\in L^2([0,T];H^1(\mathbb{T}^N))$. With the aid of (\ref{e-29}), (\ref{ee-1}) and (\ref{ee-2}), we deduce that $v_f=\Psi_{f}(u), v=\Psi(u)$. Moreover, by $\partial_i\Psi_n(u_n)\rightharpoonup \partial_i\Psi(u)$ weakly in $L^2([0,T];H)$, for any $\phi\in C^{\infty}(\mathbb{T}^N)$,  it holds that
\begin{eqnarray*}
\int^T_0\int_{\mathbb{T}^N}\partial_i\Psi_f(u)\phi&=&\lim_{n\rightarrow \infty}\int^T_0\int_{\mathbb{T}^N}\partial_i\Psi_{f,n}(u_n)\phi\\
&=&\lim_{n\rightarrow \infty}\int^T_0\int_{\mathbb{T}^N}f(u_n)\partial_i\Psi_{n}(u_n)\phi\\
&=&\int^T_0\int_{\mathbb{T}^N}f(u)\partial_i\Psi(u)\phi,
\end{eqnarray*}
where we have used $f(u_n)\rightarrow f(u)$ strongly in $L^2([0,T];H)$. Hence, (ii) of Definition \ref{dfn-4-1} is obtained.

In the following, we will show (iii) in Definition \ref{dfn-4-1}  holds.
Let $\eta$ and $\phi$ be as in (iii). By integration by parts, we get
\begin{eqnarray}\notag
-\int^T_0\int_{\mathbb{T}^N}\eta(u_n)\partial_t \phi dxdt&\leq & \int_{\mathbb{T}^N}\eta(u_{0,n})\phi(0)dx+\int^T_0\int_{\mathbb{T}^N}q_{n,\eta}(u_n)\Delta \phi dxdt\\
\label{ee-3}
&& -\int^T_0\int_{\mathbb{T}^N}\phi\eta''(u_n)|\nabla \Psi_n(u_n)|^2dxdt+\int^T_0\int_{\mathbb{T}^N}\phi\eta'(u_n)g^k_n(u_n)h_k(t)dxdt.
\end{eqnarray}
On the basis of (\ref{e-21})-(\ref{e-23}), (\ref{ee-1}) and (\ref{ee-2}), we have
\begin{eqnarray*}
\lim_{n\rightarrow \infty}\int^T_0\int_{\mathbb{T}^N}\eta(u_n)\partial_t \phi dxdt&=&\int^T_0\int_{\mathbb{T}^N}\eta(u)\partial_t \phi dxdt,\\
\lim_{n\rightarrow \infty}\int_{\mathbb{T}^N}\eta(u_{0,n})\phi(0)dx&=&\int_{\mathbb{T}^N}\eta(u_{0})\phi(0)dx,\\
\lim_{n\rightarrow \infty}\int^T_0\int_{\mathbb{T}^N}q_{n,\eta}(u_n)\Delta \phi dxdt&=&\int^T_0\int_{\mathbb{T}^N}q_{\eta}(u)\Delta \phi dxdt,\\
\lim_{n\rightarrow \infty}\int^T_0\int_{\mathbb{T}^N}\phi\eta'(u_n)g^k_n(u_n)h_k(t)dxdt&=&\int^T_0\int_{\mathbb{T}^N}\phi\eta'(u)g^k(u)h_k(t)dxdt.
\end{eqnarray*}
Set $\tilde{f}(r)=\sqrt{\eta''(r)}$. Notice that $\partial_i \Psi_{n,\tilde{f}}(u_n)=\sqrt{\eta''(u_n)}\partial_i \Psi_{n}(u_n)$. As before, we have (after passing to a subsequence if necessary) $\partial_i \Psi_{n,\tilde{f}}(u_n)\rightharpoonup \partial_i \Psi_{\tilde{f}}(u)$ in $L^2([0,T];H)$. In particular, we have $\partial_i \Psi_{n,\tilde{f}}(u_n)\rightharpoonup \partial_i \Psi_{\tilde{f}}(u)$ in $L^2([0,T]\times \mathbb{T}^N, \bar{\mu})$, where $d\bar{\mu}:=dx\otimes dt$. This implies that
\[
\int^T_0\int_{\mathbb{T}^N}\phi\eta''(u)|\nabla \Psi(u)|^2dxdt\leq \liminf_{n\rightarrow \infty}\int^T_0\int_{\mathbb{T}^N}\phi\eta''(u_n)|\nabla \Psi_n(u_n)|^2dxdt.
\]
Hence, taking $\liminf$ in both sides of (\ref{ee-3}), we by choosing an appropriate subsequence, we obtain that $u$ satisfies (iii) of Definition \ref{dfn-4-1}. We complete the proof.

\end{proof}

In view of Theorem \ref{thm-8} and Theorem \ref{thm-1}, we can define $\mathcal{G}^0: C([0,T];\mathcal{U})\rightarrow L^1([0,T];L^1(\mathbb{T}^N))$ by
\begin{eqnarray*}
\mathcal{G}^0(\check{h}):=\left\{
                   \begin{array}{ll}
                      u^{h}, & {\rm{if}}\ \check{h}= \int^{\cdot}_0 h(s)ds, \ {\rm{for\ some}}\ h\in L^2([0,T];U),\\
                    0, & {\rm{otherwise}},
                   \end{array}
                  \right.
\end{eqnarray*}
where $u^h$ is the solution of equation (\ref{P-2}).

\subsection{The continuity of the skeleton equation}
In this part, we aim to prove the continuity of the mapping $\mathcal{G}^0$. Namely, let $u^{h^{\varepsilon}}$ denote the kinetic solution  of (\ref{P-2}) with $h$ replaced by $h^\varepsilon$ and we will show that  $u^{h^{\varepsilon}}$ converges to the kinetic solution $u^h$ of the skeleton equation (\ref{P-2}) in $L^1([0,T];L^1(\mathbb{T}^N))$, if $h^\varepsilon\rightarrow h$ weakly in $L^2([0,T];U)$. To achieve it, we need the auxiliary approximating process $(A_n, g_n, u_{0,n})$ defined by (\ref{ee-4})-(\ref{e-23}).

For any family $\{h^\varepsilon,\varepsilon>0\}\subset S_M$ with $h^\varepsilon(t)=\sum_{k\geq 1}h^\varepsilon_k(t)e_k$, let us consider the following approximation
\begin{eqnarray}\label{eqq-1}
\left\{
  \begin{array}{ll}
    du^{h^\varepsilon}_n(t,x)=\Delta A_n(u^{h^\varepsilon}_n(t,x))dt+\sum_{k\geq 1}g^k_n(x,u^{h^\varepsilon}_n(t,x)) h^\varepsilon_k(t)dt,\\
    u^{h^\varepsilon}_n=u_{0,n}.
  \end{array}
\right.
\end{eqnarray}
Referring to Section 4 in \cite{DZZ}, for each $n$, the Cauchy problem $\mathcal{E}(A_n, g_n, u_{0,n})$ has a unique solution $u^{h^\varepsilon}_n\in C([0,T];H)\cap L^2([0,T];H^1)$ satisfying the following energy estimates uniformly in $n$:
\begin{eqnarray}\label{e-24}
\sup_{\varepsilon}\left\{\sup_{t\in[0,T]}\|u^{h^\varepsilon}_n(t)\|^p_H +\int^T_0\|\nabla \Psi_n( u^{h^\varepsilon}_n(s))\|^p_Hds\right\}&\leq& C(M,K,T, p, \|u_{0}\|_H),\\
\label{e-25}
\sup_{\varepsilon}\left\{\sup_{t\in[0,T]}\|u^{h^\varepsilon}_n(t)\|^{m+1}_{L^{m+1}(\mathbb{T}^N)} +\int^T_0\|\nabla A_n( u^{h^\varepsilon}_n(s))\|^2_Hds\right\}&\leq& C(M,K,T,m, \|u_{0}\|_{L^{m+1}(\mathbb{T}^N)}),
\end{eqnarray}
for all $p\geq 2$, where we have used $u_{0,n}\leq u_0$. Since $a_n\geq \frac{2}{n}>0$, we have $|\nabla u^{h^\varepsilon}_n|\leq N(n)|\nabla \Psi_n( u^{h^\varepsilon}_n)|$, then by (\ref{e-24}), it yields
\begin{eqnarray}\label{e-26}
\sup_{\varepsilon>0}\int^T_0\|\nabla u^{h^\varepsilon}_n(t)\|^p_Hdt\leq N(n)C(M,K,T, p, \|u_{0}\|_H).
\end{eqnarray}

With the above approximation process (\ref{eqq-1}), for any $\varepsilon>0$ and $n\geq 1$, we have
\begin{eqnarray*}
&&\|u^{h^{\varepsilon}}-u^h\|_{L^1([0,T];L^1(\mathbb{T}^N))}\\ \notag
&\leq& \|u^{h^{\varepsilon}}_n-u^{h^{\varepsilon}}\|_{L^1([0,T];L^1(\mathbb{T}^N))}
+\|u^{h^{\varepsilon}}_{n}-u^{h}_{n}\|_{L^1([0,T];L^1(\mathbb{T}^N))}+
+\|u^h_n-u^h\|_{L^1([0,T];L^1(\mathbb{T}^N))}.
\end{eqnarray*}
In order to establish the continuity of the skeleton equations, several steps are involved.

Firstly, we show the uniform convergence of the sequence $\{u^{h}_n, n\geq 1\}$ to $u^h$ over $h\in S_M$.
\begin{prp}\label{prp-4}
Assume $(A,g,u_0)$ satisfies Hypotheses H, then
\begin{eqnarray*}
\lim_{n\rightarrow \infty}\sup_{h\in S_M}\|u^h_n-u^h\|_{L^1([0,T];L^1(\mathbb{T}^N))}=0.
\end{eqnarray*}
\end{prp}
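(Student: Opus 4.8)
The plan is to exploit the quantitative stability estimate (2) of Theorem~\ref{thm-2} uniformly in $h\in S_M$, since every constant appearing there ($N_0$, $K$, the exponents, the dependence on $M$ via $\int_0^t|h(s)|_U\,ds\le(T+M)/2$) is independent of the particular $h\in S_M$. Concretely, I would apply Theorem~\ref{thm-2}(2) with $u:=u^h_n$ the solution of $\mathcal{E}(A_n,g_n,u_{0,n})$ and $\tilde u:=u^h$ the kinetic solution of $\mathcal{E}(A,g,u_0)$, noting that $(A_n,g_n,u_{0,n})$ satisfy Hypothesis~H with a uniform constant (namely $3K$, as recorded around (\ref{e-22})), so the doubling-of-variables comparison is legitimate with a constant not depending on $n$ or $h$. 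With $\delta=\gamma^{3/2\alpha}$ and $\lambda=8/n$ (so that (\ref{e-29}) gives $R_\lambda\ge n$), estimate (2) yields, for a.e.\ $t\in[0,T]$,
\begin{eqnarray*}
\|u^h_n(t)-u^h(t)\|_{L^1(\mathbb{T}^N)}
&\le& M(\gamma)+e^{K(T+M)}\Big[\|u_{0,n}-u_0\|_{L^1(\mathbb{T}^N)}+N_0\gamma^{-2}n^{-2}\\
&& +N_0\gamma^{-2}\|I_{|u^h_n|\ge n}(1+|u^h_n|)\|^m_{L^m([0,T]\times\mathbb{T}^N)}\\
&& +N_0\gamma^{-2}\|I_{|u^h|\ge n}(1+|u^h|)\|^m_{L^m([0,T]\times\mathbb{T}^N)}+2d^{\frac12}(g_n,g)C(m)(T+M)\Big],
\end{eqnarray*}
where $M(\gamma)\to0$ as $\gamma\to0$ and $M(\gamma)$ does not depend on $n$ or $h$ (the error terms $\mathcal{E}_0,\mathcal{E}_t$ depend only on the mollification parameters).

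The key point is then to bound the right-hand side \emph{uniformly over $h\in S_M$}. The terms $\|u_{0,n}-u_0\|_{L^1}$, $N_0\gamma^{-2}n^{-2}$ and $d^{1/2}(g_n,g)$ are already $h$-independent and tend to $0$ as $n\to\infty$ by (\ref{e-21}). For the two tail terms I would use the energy bounds (\ref{e-24-1})–(\ref{e-25-1}), which hold with constants depending only on $M,K,T,m,\|u_0\|_{L^{m+1}}$ — crucially \emph{uniform in $h\in S_M$}: they give $\sup_h\|u^h_n\|^{m+1}_{L^{m+1}([0,T]\times\mathbb{T}^N)}\le C$ and, by Theorem~\ref{thm-2}(1) or directly, the same bound for $u^h$. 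Since $m<m+1$, the families $\{(1+|u^h_n|)^m : n\ge1,\,h\in S_M\}$ and $\{(1+|u^h|)^m : h\in S_M\}$ are bounded in $L^{(m+1)/m}$, hence uniformly integrable on $[0,T]\times\mathbb{T}^N$; by Chebyshev, $|\{|u^h_n|\ge n\}|\to0$ uniformly in $h$, so
\[
\sup_{h\in S_M}\|I_{|u^h_n|\ge n}(1+|u^h_n|)\|^m_{L^m([0,T]\times\mathbb{T}^N)}\longrightarrow0,\qquad n\to\infty,
\]
and likewise for $u^h$. Therefore, given $\iota>0$, first fix $\gamma$ small so that $M(\gamma)<\iota$; then choose $n_0$ large so that for $n\ge n_0$ every remaining term is $<\iota$ uniformly in $h\in S_M$; this yields $\sup_{h\in S_M}\|u^h_n(t)-u^h(t)\|_{L^1(\mathbb{T}^N)}\le C\iota$ for a.e.\ $t$, and integrating in $t$ gives the claim.

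The main obstacle — and the step deserving the most care — is establishing that the uniform integrability of $\{(1+|u^h_n|)^m\}$ and $\{(1+|u^h|)^m\}$ is genuinely uniform over the whole family $\{h\in S_M\}$, not just for a fixed $h$. This hinges on the fact that the $L^{m+1}$-energy estimate (\ref{e-25-1}) has a right-hand side $C(M,K,T,m,\|u_0\|_{L^{m+1}})$ in which $M$ enters only as the $S_M$-radius and not as anything $h$-specific; one should verify this by re-examining the integration-by-parts computation for $\|u^{h}_n(t)\|^{m+1}_{L^{m+1}}$, where the noise-type term $\sum_k\int g^k_n(u^h_n)h_k\,dt$ is controlled via Cauchy–Schwarz in $k$, the growth bound $G\le K(1+|u|)$, and $\int_0^T|h(s)|_U\,ds\le \sqrt{T}\,\|h\|_{L^2([0,T];U)}\le\sqrt{TM}$, giving a Grönwall-type bound with $h$-independent constants. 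Once this uniformity is in hand, the rest is a routine $\iota/$choose-$\gamma$-then-$n_0$ argument as above.
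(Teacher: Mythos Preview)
Your overall strategy---apply the quantitative stability estimate of Theorem~\ref{thm-2}(2), choose $\lambda\sim 1/n$ so that $R_\lambda\ge n$ via (\ref{e-29}), set $\delta=\gamma^{3/(2\alpha)}$, and control the tail terms through the $L^{m+1}$ energy bound (\ref{e-25-1}) uniformly in $h\in S_M$---is exactly the route the paper takes. Your two-step scheme ``fix $\gamma$ small, then send $n\to\infty$'' is a harmless variant of the paper's coupling $\gamma=n^{-1/2}$ (it is in fact the scheme used in the existence proof, Theorem~\ref{thm-1}), and your uniform-integrability argument for $(1+|u^h_n|)^m$ via the $L^{(m+1)/m}$ bound is correct and arguably cleaner than the paper's.

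There is, however, a genuine gap in one sentence: you assert that ``the error terms $\mathcal{E}_0,\mathcal{E}_t$ depend only on the mollification parameters''. This is false. By its definition (\ref{qq-3}), $\mathcal{E}_t(\gamma,\delta)$ is built from $f_1^\pm=I_{u^h_n>\xi}$ and $f_2^\pm=I_{u^h>\xi}$; in particular its $\gamma$-part is controlled (see the derivation of (\ref{qq-1})--(\ref{qq-2})) by the spatial $L^1$-modulus of continuity of $u^h(\cdot,t)$ and $u^h_n(\cdot,t)$, which \emph{does} depend on $h$ (and $n$). So putting $\mathcal{E}_t$ inside an $h$-independent $M(\gamma)$ is not justified, and your final ``choose $\gamma$, then $n_0$'' step does not close without an additional argument showing $\sup_{h\in S_M}\int_0^T\mathcal{E}_t(\gamma,\delta)\,dt\to 0$ as $\gamma,\delta\to 0$. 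The paper's proof of Proposition~\ref{prp-4} is equally terse on this point, but the needed ingredient is available elsewhere in the paper: applying (\ref{qeq-18}) with $f_1=f_2$ (as is done in the proof of Theorem~\ref{thm-5} to bound $\sup_t\mathcal{E}_t$) yields a bound on $\int_{(\mathbb{T}^N)^2}\rho_\gamma(x-y)|u^h(x,t)-u^h(y,t)|\,dx\,dy$ of the form $C(\gamma^{-2}\delta^{2\alpha}(1+\|u^h\|^m_{L^m})+\gamma+\delta+\mathcal{E}_0)$, with constants depending only on $K,T,M$; since $\mathcal{E}_0$ involves only the fixed $u_0$ and $u_{0,n}$, this is uniform in $h$ and tends to $0$ with $\delta=\gamma^{3/(2\alpha)}$. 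You should insert this step (or at least flag it) rather than declaring $\mathcal{E}_t$ $h$-independent.
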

\begin{proof}
As discussed above, under assumption, we know that $(A_n, g_n, u_{0,n})$ satisfies Hypotheses H. According to (1) of Theorem \ref{thm-2}, we have that for all $\gamma, \delta\in (0,1)$, $\lambda\in [0,1]$, $\alpha\in (0,1\wedge \frac{m}{2})$ and a.e. $t\in [0,T]$,
\begin{eqnarray*}
\|u^h_n(t)-u^h(t)\|_{L^1(\mathbb{T}^N)}
&\leq& \mathcal{E}_t(\gamma,\delta)+e^{K(T+M)}\Big[\|u_0-u_{0,n}\|_{L^1(\mathbb{T}^N)} +\mathcal{E}_0(\gamma,\delta)\\ \notag
&& +N_0\gamma^{-2}(\lambda^2+\delta^{2\alpha})\Big(1+\|u^h_n\|^m_{L^m([0,T]\times \mathbb{T}^N)}+\|u^h\|^m_{L^m([0,T]\times \mathbb{T}^N)}\Big)\\
\notag
&& +N_0\gamma^{-2}\|I_{|u^h_n|\geq R_{\lambda}}(1+|u^h_n|)\|^m_{L^m([0,T]\times \mathbb{T}^N)}+N_0\gamma^{-2}\|I_{|u^h|\geq R_{\lambda}}(1+|u^h|)\|^m_{L^m([0,T]\times \mathbb{T}^N)} \\
&& +2 K(\gamma+2\delta)(T+M)+2d^{\frac{1}{2}}(g,g_n)C(m)(T+M)\Big],
\end{eqnarray*}
where $\mathcal{E}_0(\gamma,\delta), \mathcal{E}_t(\gamma,\delta)\rightarrow 0$ as $\gamma, \delta\rightarrow 0$, the constant $N_0$ is independent of $\gamma,\delta,\lambda$ and $R_{\lambda}$ is defined by
\[
R_{\lambda}=\sup\{R\in [0,\infty]: |a(r)-a_n(r)|\leq \lambda, \forall |r|< R\}.
\]
 For $n\geq 4$, we can choose $\lambda=\frac{4}{n}$, by (\ref{e-29}), we deduce that $R_{\lambda}\geq n$.
Furthermore, let $\gamma=n^{-\frac{1}{2}}$ and $\delta=\gamma^{\frac{3}{2\alpha}}$, it follows that
\begin{eqnarray*}
&&\underset{0\leq t\leq T}{{\rm{ess\sup}}}\ \|u^h_n(t)-u^h(t)\|_{L^1(\mathbb{T}^N)}\\
&\leq& \mathcal{E}_t(\gamma,\delta)+e^{K(T+M)}\Big[\|u_0-u_{0,n}\|_{L^1(\mathbb{T}^N)} +\mathcal{E}_0(\gamma,\delta)\\ \notag
&& +N_0\frac{16}{n}\Big(1+\|u^h_n\|^m_{L^m([0,T]\times \mathbb{T}^N)}+\|u^h\|^m_{L^m([0,T]\times \mathbb{T}^N)}\Big)\\ \notag
&& +N_0\frac{1}{\sqrt{n}}\Big(1+\|u^h_n\|^m_{L^m([0,T]\times \mathbb{T}^N)}+\|u^h\|^m_{L^m([0,T]\times \mathbb{T}^N)}\Big)\\
\notag
&& +N_0 n\|I_{|u^h_n|\geq n}(1+|u^h_n|)\|^m_{L^m([0,T]\times \mathbb{T}^N)}+N_0n\|I_{|u^h|\geq n}(1+|u^h|)\|^m_{L^m([0,T]\times \mathbb{T}^N)} \\
&& +2 K(n^{-\frac{1}{2}}+2n^{-\frac{3}{4\alpha}})(T+M)+2d^{\frac{1}{2}}(g,g_n)C(m)(T+M)\Big].
\end{eqnarray*}
Note that
\begin{eqnarray*}
n\|I_{|u^h_n|\geq n}(1+|u^h_n|)\|^m_{L^m([0,T]\times \mathbb{T}^N)}&\leq& \int^T_0\int_{\mathbb{T}^N}I_{|u^h_n|\geq n}|u^h_n(x,t)|(1+|u^h_n(x,t)|)^mdxdt\\
&\leq& C(m)\int^T_0\int_{\mathbb{T}^N}I_{|u^h_n|\geq n}(1+|u^h_n(x,t)|^{m+1})dxdt,
\end{eqnarray*}
hence, by (\ref{e-25}), we get $\sup_{h\in S_M}n\|I_{|u^h_n|\geq n}(1+|u^h_n|)\|^m_{L^m([0,T]\times \mathbb{T}^N)}\rightarrow 0$, as $n\rightarrow \infty$. Similarly, we obtain $\sup_{h\in S_M}n\|I_{|u^h|\geq n}(1+|u^h|)\|^m_{L^m([0,T]\times \mathbb{T}^N)}\rightarrow 0$, as $n\rightarrow \infty$.

Taking into account (\ref{e-19}), (\ref{e-21}) and (\ref{e-25}), we get
\begin{eqnarray*}
\sup_{h\in S_M}\|u^h_n-u^h\|_{L^1([0,T];L^1(\mathbb{T}^N))}
\leq T\cdot \sup_{h\in S_M}\underset{0\leq t\leq T}{{\rm{ess\sup}}}\ \|u^h_n(t)-u^h(t)\|_{L^1(\mathbb{T}^N)}\rightarrow 0,\quad {\rm{as}}\ n\rightarrow \infty.
\end{eqnarray*}

\end{proof}

In the following,
we prove the compactness of $\{u^{h^\varepsilon}_n, \varepsilon> 0\}$. For simplicity,  we set $u^{\varepsilon}_{n}:=u^{h^\varepsilon}_{n}$.

As in \cite{FG95}, we introduce the following space. Let $Y$ be a separable Banach space with the norm $\|\cdot\|_Y$.
Given $p>1, \beta\in (0,1)$, let $W^{\beta,p}([0,T]; Y)$ be the Sobolev space of all functions $u\in L^p([0,T];Y)$ such that
\[
\int^T_0\int^T_0\frac{\|u(t)-u(s)\|^p_Y}{|t-s|^{1+\beta p}}dtds< \infty,
\]
which is then endowed with the norm
\[
\|u\|^p_{W^{\beta,p}([0,T]; Y)}=\int^T_0\|u(t)\|_Y^pdt+\int^T_0\int^T_0\frac{\|u(t)-u(s)\|_Y^{ p}}{|t-s|^{1+\beta p}}dtds.
\]
The following result can be found in \cite{FG95}.
\begin{lemma}\label{lem-1-1}
Let $B_0\subset B\subset B_1$ be three Banach spaces. Assume that both $B_0$ and $B_1$ are reflexive, and $B_0$ is compactly embedded in $B$. Let $p\in (1, \infty)$ and $\beta \in (0, 1)$ be given. Let $\Lambda$ be the space
\[
\Lambda:= L^p([0, T]; B_0)\cap W^{\beta, p}([0,T]; B_1)
\]
endowed with the natural norm. Then the embedding of $\Lambda$ in $L^p([0,T];B)$ is compact.

\end{lemma}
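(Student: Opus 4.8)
The plan is to derive the statement from Simon's compactness criterion for vector-valued $L^{p}$ spaces, combined with Ehrling's interpolation inequality and an elementary time-translation estimate in the fractional Sobolev space. It is enough to prove that every bounded subset $\mathcal{K}\subset\Lambda$ is relatively compact in $L^{p}([0,T];B)$; set $R:=\sup_{u\in\mathcal{K}}\|u\|_{\Lambda}<\infty$. Since $B_{0}\hookrightarrow B$, $\mathcal{K}$ is bounded in $L^{p}([0,T];B)$, so by Simon's theorem (for $p\in(1,\infty)$) it suffices to verify: (i) for all $0<t_{1}<t_{2}<T$ the set of mean values $\{\int_{t_{1}}^{t_{2}}u(t)\,dt:\ u\in\mathcal{K}\}$ is relatively compact in $B$; and (ii) the time-translations are equicontinuous, i.e. $\sup_{u\in\mathcal{K}}\|u(\cdot+h)-u(\cdot)\|_{L^{p}([0,T-h];B)}\to0$ as $h\downarrow0$.

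Property (i) is immediate: by H\"older's inequality $\big\|\int_{t_{1}}^{t_{2}}u(t)\,dt\big\|_{B_{0}}\le (t_{2}-t_{1})^{1-1/p}\|u\|_{L^{p}([0,T];B_{0})}\le (t_{2}-t_{1})^{1-1/p}R$, so these mean values form a bounded subset of $B_{0}$, which is relatively compact in $B$ because $B_{0}$ is compactly embedded in $B$.

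Property (ii) is the core of the argument. Ehrling's lemma applies since $B_{0}\hookrightarrow\hookrightarrow B\hookrightarrow B_{1}$ with the first embedding compact: for every $\eta>0$ there is $C_{\eta}$ with $\|v\|_{B}\le\eta\|v\|_{B_{0}}+C_{\eta}\|v\|_{B_{1}}$ for all $v\in B_{0}$. Taking $v=u(t+h)-u(t)$, raising to the power $p$, integrating over $t\in[0,T-h]$, and using $\|u(\cdot+h)-u(\cdot)\|_{L^{p}([0,T-h];B_{0})}\le 2R$, one gets
\begin{eqnarray*}
\|u(\cdot+h)-u(\cdot)\|_{L^{p}([0,T-h];B)}^{p}\le 2^{p-1}(2R)^{p}\eta^{p}+2^{p-1}C_{\eta}^{p}\,\|u(\cdot+h)-u(\cdot)\|_{L^{p}([0,T-h];B_{1})}^{p}.
\end{eqnarray*}
For the $B_{1}$-term one proves the translation bound $\|u(\cdot+h)-u(\cdot)\|_{L^{p}([0,T-h];B_{1})}\le 2h^{\beta}[u]_{W^{\beta,p}([0,T];B_{1})}\le 2h^{\beta}R$: write $u(t+h)-u(t)=h^{-1}\int_{t}^{t+h}\big((u(t+h)-u(s))+(u(s)-u(t))\big)\,ds$, apply Jensen's inequality to each average, and use that $h^{-1}\le h^{\beta p}\rho^{-1-\beta p}$ for $0<\rho\le h$ (applied with $\rho=|t-s|$ and $\rho=|t+h-s|$) to recognise the $B_{1}$-valued Gagliardo seminorm of $u$. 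Hence, given $\epsilon>0$, first fix $\eta$ with $2^{p-1}(2R)^{p}\eta^{p}<\epsilon/2$, then fix $h_{0}$ with $2^{p-1}C_{\eta}^{p}(2R)^{p}h^{\beta p}<\epsilon/2$ for $0<h\le h_{0}$; this gives (ii), uniformly over $\mathcal{K}$.

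With (i) and (ii) verified, Simon's criterion yields the relative compactness of $\mathcal{K}$ in $L^{p}([0,T];B)$, i.e. the compactness of the embedding $\Lambda\hookrightarrow L^{p}([0,T];B)$. (The reflexivity of $B_{0}$ and $B_{1}$, hence of $L^{p}([0,T];B_{0})$ and $W^{\beta,p}([0,T];B_{1})$ for $p\in(1,\infty)$, can alternatively be used to extract from any bounded sequence in $\Lambda$ a subsequence converging weakly in $\Lambda$; its limit lies in $\Lambda$ and is identified with the strong $L^{p}([0,T];B)$-limit produced above.) The main obstacle is step (ii): one must turn the double-integral $W^{\beta,p}$-seminorm into a uniform $O(h^{\beta})$ modulus of continuity in $L^{p}([0,T];B_{1})$ and balance it against the small $B_{0}$-contribution from Ehrling's inequality, so that the final bound is uniform over the bounded set $\mathcal{K}$.
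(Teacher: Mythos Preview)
Your argument is correct: Simon's compactness criterion together with Ehrling's interpolation inequality and the $O(h^{\beta})$ translation estimate in $W^{\beta,p}([0,T];B_{1})$ is the standard route to this Aubin--Lions--Simon type result, and your sketch of the translation bound via the averaging identity and the inequality $h^{-1}\le h^{\beta p}|t-s|^{-1-\beta p}$ for $|t-s|\le h$ is sound.

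However, there is nothing to compare against: the paper does not prove this lemma at all. It is stated with the sentence ``The following result can be found in \cite{FG95}'' and used as a black box, the reference being Flandoli--Gatarek, \emph{Probab.\ Theory Related Fields} 102 (1995). So your proposal supplies a self-contained proof where the paper simply invokes the literature; the reflexivity hypotheses on $B_{0}$ and $B_{1}$ are part of the original Flandoli--Gatarek formulation (and are convenient for the weak-limit identification you mention at the end), though as your argument shows, Simon's criterion itself does not require them for the compactness conclusion.
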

{\color{rr}
To obtain the compactness, we also need the following lemma.
\begin{lemma}\label{lem-2-1}
Assume $B_2\subset B_3$ are two Banach spaces with compact embedding, and the real numbers $\beta\in (0,1)$, $p>1$ satisfy $\beta p>1$, then the space $C([0,T];B_2)\cap W^{\beta, p}([0,T];B_3)$ is compactly embedded into $C([0,T];B_3)$.
\end{lemma}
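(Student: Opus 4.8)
The plan is to reduce the statement to the classical Arzel\`a--Ascoli theorem for $B_3$-valued continuous functions. Two ingredients are needed: (a) a bounded set in $W^{\beta,p}([0,T];B_3)$ with $\beta p>1$ is uniformly equicontinuous when viewed in $C([0,T];B_3)$; (b) a bounded set in $C([0,T];B_2)$ is, at every fixed time, relatively compact in $B_3$, because the embedding $B_2\hookrightarrow B_3$ is compact. Combining (a) and (b) with Arzel\`a--Ascoli yields precompactness in $C([0,T];B_3)$, which is exactly the asserted compact embedding.

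\textbf{Step 1 (Morrey-type embedding).} I would first establish that $W^{\beta,p}([0,T];B_3)$ embeds continuously into $C^{0,\beta-\frac1p}([0,T];B_3)$ whenever $\beta p>1$. This is the Banach-valued analogue of the Sobolev--Morrey inequality; it follows, for instance, from the Garsia--Rodemich--Rumsey inequality with $\Psi(x)=x^p$ and $p(u)=u^{\beta+1/p}$, or from the standard dyadic telescoping argument applied to the Gagliardo seminorm. The outcome is a constant $C=C(\beta,p,T)$ such that, after modification on a Lebesgue-null set, every $u\in W^{\beta,p}([0,T];B_3)$ satisfies
\[
\|u(t)-u(s)\|_{B_3}\ \le\ C\,\|u\|_{W^{\beta,p}([0,T];B_3)}\,|t-s|^{\beta-\frac1p},\qquad \forall\, s,t\in[0,T].
\]

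\textbf{Step 2 (equicontinuity and pointwise precompactness).} Let $\{u_n\}_{n\ge1}$ be a bounded sequence in $\Lambda:=C([0,T];B_2)\cap W^{\beta,p}([0,T];B_3)$, say $\|u_n\|_{\Lambda}\le R$ for all $n$. By Step 1 the family $\{u_n\}$ is uniformly equicontinuous as a subset of $C([0,T];B_3)$, since $\|u_n(t)-u_n(s)\|_{B_3}\le CR\,|t-s|^{\beta-\frac1p}$ uniformly in $n$. On the other hand $\sup_n\sup_{t\in[0,T]}\|u_n(t)\|_{B_2}\le R$, so for each fixed $t$ the set $\{u_n(t):n\ge1\}$ is bounded in $B_2$, hence relatively compact in $B_3$ by the compactness of $B_2\hookrightarrow B_3$. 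Then the vector-valued Arzel\`a--Ascoli theorem gives a subsequence of $\{u_n\}$ converging in $C([0,T];B_3)$; as this holds for every bounded sequence in $\Lambda$, the embedding $\Lambda\hookrightarrow C([0,T];B_3)$ is compact.

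The main obstacle is Step 1: carefully justifying the Morrey-type embedding $W^{\beta,p}([0,T];B_3)\hookrightarrow C^{0,\beta-1/p}([0,T];B_3)$ for Banach-space-valued functions and extracting the pointwise H\"older bound from the double integral in the definition of the Sobolev norm. Everything afterwards is a routine application of Arzel\`a--Ascoli. Alternatively, this embedding (and indeed the whole lemma, which is a variant of Simon's compactness theorem) may simply be quoted from \cite{FG95}.
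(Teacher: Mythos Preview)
Your proposal is correct and follows essentially the same route as the paper's proof: the paper also invokes the continuous embedding $W^{\beta,p}([0,T];B_3)\hookrightarrow C^{\iota}([0,T];B_3)$ for some $\iota>0$ to obtain uniform equicontinuity, uses the compact embedding $B_2\hookrightarrow B_3$ for pointwise relative compactness, and concludes via Arzel\`a--Ascoli. The only minor difference is that you spell out the H\"older exponent as $\beta-\tfrac1p$ and indicate how to justify the Morrey-type embedding (via Garsia--Rodemich--Rumsey), whereas the paper simply quotes it.
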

\begin{proof}
Clearly, the space $W^{\beta, p}([0,T];B_3)$ is continuously embedded into $C^{\iota}([0,T];B_3)$ for all $\iota\in (0, \beta p-1)$. Thus, if a set $\mathcal{R}$ is bounded in  $W^{\beta, p}([0,T];B_3)\cap C([0,T];B_2)$, it is bounded in $C^{\iota}([0,T];B_3)$. It follows that the functions in $\mathcal{R}$ are uniformly equi-continuous in $C([0,T];B_3)$. Since $B_2\subset B_3$ is compactly embedded, for each $s\in [0,T]$ the set $\{f(s): f\in \mathcal{R}\}$ is bounded in $B_2$ and thus relatively compact in $B_3$. We can apply Ascoli-Arzel\`{a} theorem to conclude that $\mathcal{R}$ is relatively compact in $C([0,T];B_3)$. We complete the proof.

\end{proof}
}

With the help of Lemma \ref{lem-1-1}, we obtain the following result.
\begin{prp}\label{prpp-1}
For any $n\geq 1$, $\{u^{\varepsilon}_{n}, \varepsilon> 0\}$ is compact in $L^2([0,T];H)$.
\end{prp}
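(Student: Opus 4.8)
The plan is to apply Lemma \ref{lem-1-1} with the triple $B_0 = H^1(\mathbb{T}^N)$, $B = H = L^2(\mathbb{T}^N)$, $B_1 = H^{-\kappa}(\mathbb{T}^N)$ for a suitably large $\kappa$ (so that $A_n(u^\varepsilon_n)$ lives in the dual of a Sobolev space and the zero-order term is controlled), with $p=2$ and some $\beta\in(0,\frac12)$. The compact embedding $B_0\Subset B$ is the classical Rellich--Kondrachov embedding $H^1(\mathbb{T}^N)\Subset L^2(\mathbb{T}^N)$, and $B_0,B_1$ are reflexive, so once I exhibit a uniform bound
$$
\sup_{\varepsilon>0}\Big(\|u^\varepsilon_n\|_{L^2([0,T];H^1)}^2 + \|u^\varepsilon_n\|_{W^{\beta,2}([0,T];H^{-\kappa})}^2\Big)<\infty,
$$
Lemma \ref{lem-1-1} gives that $\{u^\varepsilon_n\}$ is relatively compact in $L^2([0,T];H)$, which is the claim.

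First I would record the spatial bound. By the energy estimate (\ref{e-24}) we have $\sup_\varepsilon\int_0^T\|\nabla\Psi_n(u^\varepsilon_n(s))\|_H^2\,ds\le C$, and since $a_n\ge 2/n>0$ the pointwise inequality $|\nabla u^\varepsilon_n|\le N(n)|\nabla\Psi_n(u^\varepsilon_n)|$ from (\ref{e-26}) together with $\sup_{\varepsilon,t}\|u^\varepsilon_n(t)\|_H\le C$ yields $\sup_\varepsilon\|u^\varepsilon_n\|_{L^2([0,T];H^1)}^2<\infty$; this controls the $L^2([0,T];B_0)$ part. Next I would handle the fractional-in-time bound. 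Writing the equation in integral form,
$$
u^\varepsilon_n(t) = u_{0,n} + \int_0^t \Delta A_n(u^\varepsilon_n(s))\,ds + \sum_{k\ge1}\int_0^t g^k_n(x,u^\varepsilon_n(s))h^\varepsilon_k(s)\,ds,
$$
both terms are in fact differentiable in $t$ with values in $H^{-\kappa}$ for $\kappa$ large: $\|\Delta A_n(u^\varepsilon_n(s))\|_{H^{-\kappa}}\le C\|A_n(u^\varepsilon_n(s))\|_{H^{-\kappa+2}}\le C\|A_n(u^\varepsilon_n(s))\|_{L^2}$ (taking $\kappa\ge 2$), which is bounded in $L^2([0,T])$ uniformly in $\varepsilon$ by (\ref{e-25}) (note $\|\nabla A_n(u^\varepsilon_n)\|_{L^2([0,T];H)}\le C$ and the $L^{m+1}$ bound control $A_n(u^\varepsilon_n)$ in $L^2$ since $A_n$ has at most polynomial growth of degree $m$), and the drift term has $H$-norm bounded by $K(1+\|u^\varepsilon_n(s)\|_H)|h^\varepsilon(s)|_U$ via (\ref{equ-28}), which is in $L^1([0,T])$ uniformly in $\varepsilon$ because $|h^\varepsilon|_U\in L^2([0,T])$ with $\int_0^T|h^\varepsilon|_U^2\,ds\le M$. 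Hence $\partial_t u^\varepsilon_n$ is bounded in $L^1([0,T];H^{-\kappa})$ uniformly in $\varepsilon$, which embeds into $W^{\beta,1}([0,T];H^{-\kappa})$ and, combined with the $L^2([0,T];H^1)\hookrightarrow L^2([0,T];H^{-\kappa})$ bound, into $W^{\beta,2}([0,T];H^{-\kappa})$ for $\beta$ small enough by interpolation (or one directly estimates $\|u^\varepsilon_n(t)-u^\varepsilon_n(s)\|_{H^{-\kappa}}\le \int_s^t\|\partial_r u^\varepsilon_n(r)\|_{H^{-\kappa}}dr$ and checks the double integral defining the $W^{\beta,2}$ seminorm is finite using $\beta<\frac12$). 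This supplies the $W^{\beta,p}([0,T];B_1)$ bound.

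The main obstacle I anticipate is not the functional-analytic machinery but the careful bookkeeping of the growth of $A_n$: one must ensure $A_n(u^\varepsilon_n)$ is bounded in $L^2([0,T];H)$ (or at least in $L^2([0,T];H^{-\kappa+2})$) uniformly in $\varepsilon$ for \emph{fixed} $n$, using that $|a_n(r)|\le C(n,m,K)$ and $a_n$ has the derivative bound in (\ref{e-23}), so that $|A_n(r)|\le C(n)(1+|r|^{m})$ or even $|A_n(r)|\le C(n)|r|$ near infinity is not available — but the $L^{m+1}$ bound (\ref{e-25}) on $u^\varepsilon_n$ exactly matches this polynomial growth since $2m\le 2(m+1)$ is false in general, so one should instead use $\|\nabla A_n(u^\varepsilon_n)\|_{L^2([0,T];H)}\le C$ from (\ref{e-25}) plus Poincaré (after subtracting the spatial mean, which is itself controlled) to bound $A_n(u^\varepsilon_n)$ in $L^2([0,T];H)$ directly, avoiding the growth issue entirely. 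With that observation the estimates close and Lemma \ref{lem-1-1} finishes the proof; I would also remark that Lemma \ref{lem-2-1} is available if one prefers to additionally deduce compactness in $C([0,T];H^{-\kappa})$, which will be convenient for identifying limits later, but it is not needed for the present statement.
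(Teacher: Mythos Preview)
Your proposal is correct and follows essentially the same strategy as the paper: apply Lemma~\ref{lem-1-1} with $B_0=H^1(\mathbb{T}^N)$, $B=H$, and $B_1$ a negative-order Sobolev space, using the uniform $L^2([0,T];H^1)$ bound from (\ref{e-26}) and a $W^{\beta,2}$-in-time bound obtained from the equation. The paper streamlines your execution by taking $B_1=H^{-1}(\mathbb{T}^N)$ and using the duality estimate $\|\Delta A_n(u^\varepsilon_n)\|_{H^{-1}}\le C\|\nabla A_n(u^\varepsilon_n)\|_H$ (so (\ref{e-25}) immediately places this term in $L^2([0,T];H^{-1})$ with no discussion of the growth of $A_n$ needed), and by noting that the drift term is actually in $L^2([0,T];H)$---since $\sup_t\|u^\varepsilon_n(t)\|_H<\infty$ and $|h^\varepsilon|_U\in L^2$---which gives the $W^{\beta,2}$ seminorm bound for $\beta<\tfrac12$ directly, avoiding the $L^1$-plus-interpolation detour you sketch.
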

\begin{proof}
From (\ref{eqq-1}), $ u^{\varepsilon}_{n}$ can be written as
\begin{eqnarray*}
 u^{\varepsilon}_{n}(t)&=&u_{0,n}+\int^t_0 \Delta A_n(u^{\varepsilon}_{n}) ds +\int^t_0\sum_{k\geq 1}g^k_n(x,u^{\varepsilon}_{n}(t,x)) h^{\varepsilon}_k(s)ds\\
 &=:& I^\varepsilon_1+I^\varepsilon_2(t)+I^\varepsilon_3(t).
\end{eqnarray*}
Clearly, $\|I^\varepsilon_1\|_{H}\leq C_1$. Next,
\begin{eqnarray*}
\| \Delta A_n(u^{\varepsilon}_n) \|_{H^{-1}(\mathbb{T}^N)}
&=& \sup_{\|v\|_{H^1(\mathbb{T}^N)}\leq 1}|\langle v,\Delta A_n(u^{\varepsilon}_n)\rangle|\\
&=& \sup_{\|v\|_{H^1(\mathbb{T}^N)}\leq 1}|\langle \nabla v, \nabla A_n(u^{\varepsilon}_n)\rangle|\\
&\leq& C\|\nabla A_n(u^{\varepsilon}_n)\|_H
\end{eqnarray*}
which then yields the following
\begin{eqnarray*}
\|I^\varepsilon_2(t)-I^\varepsilon_2(s)\|^2_{H^{-1}(\mathbb{T}^N)}
&=&\|\int^t_s\Delta A_n(u^{\varepsilon}_n)(l)dl \|^2_{H^{-1}(\mathbb{T}^N)}\\
&\leq& C(t-s)\int^t_s\|\Delta A_n(u^{\varepsilon}_n)(l)\|^2_{H^{-1}(\mathbb{T}^N)}dl\\
&\leq& C(t-s)\int^t_s\|\nabla A_n(u^{\varepsilon}_n)(l)\|^2_Hdl.
\end{eqnarray*}
Hence, by (\ref{e-25}), we have for $\beta\in (0,\frac{1}{2})$,
\begin{eqnarray*}\notag
&&\sup_\varepsilon\|I^\varepsilon_2\|^2_{W^{\beta,2}([0,T];H^{-1}(\mathbb{T}^N))}\\ \notag
&\leq&\int^T_0 \|I^\varepsilon_2(t)\|^2_{H^{-1}(\mathbb{T}^N)}dt+\int^T_0\int^T_0\frac{\|I^\varepsilon_2(t)-I^\varepsilon_2(s)\|^2_{H^{-1}(\mathbb{T}^N)}}{|t-s|^{1+2\beta}}dsdt\\
&\leq& C_2(\beta).
\end{eqnarray*}
Moreover, by H\"{o}lder inequality and (\ref{equ-28}), it follows that
\begin{eqnarray*}
\|\sum_{k\geq 1}g^k_n(x,u^{\varepsilon}_{n}(l,x)) h^{\varepsilon}_k(l)\|^2_{H}&\leq& \int_{\mathbb{T}^N}\sum_{k\geq 1}|g^k_n(x,u^{\varepsilon}_{n}(l,x))|^2\sum_{k\geq 1}|h^{\varepsilon}_k(l)|^2\\
&\leq& K^2(1+\|u^{\varepsilon}_n(l)\|^2_H)|h^{\varepsilon}(l)|^2_U,
\end{eqnarray*}
then, by H\"{o}lder inequality, we get
\begin{eqnarray*}
\|I^\varepsilon_3(t)-I^\varepsilon_3(s)\|^2_{H}
&=&\|\int^t_s\sum_{k\geq 1}g^k_n(x,u^{\varepsilon}_{n}(l,x)) h^{\varepsilon}_k(l)dl \|^2_{H}\\
&\leq& (t-s)\int^t_s\|\sum_{k\geq 1}g^k_n(x,u^{\varepsilon}_{n}(l,x)) h^{\varepsilon}_k(l)\|^2_{H}dl\\
&\leq& K^2(t-s)(1+\sup_{t\in[0,T]}\|u^{\varepsilon}_n(t)\|^2_H)\int^t_s|h^{\varepsilon}(l)|^2_Udl\\
&\leq& K^2M(t-s)(1+\sup_{t\in[0,T]}\|u^{\varepsilon}_n(t)\|^2_H).
\end{eqnarray*}
Thus, we deduce from (\ref{e-24}) that for $\beta\in (0,\frac{1}{2})$,
\begin{eqnarray*}\notag
&&\sup_\varepsilon\|I^\varepsilon_3\|^2_{W^{\beta,2}([0,T];H)}\\ \notag
&\leq&\int^T_0 \|I^\varepsilon_3(t)\|^2_{H}dt+\int^T_0\int^T_0\frac{\|I^\varepsilon_3(t)-I^\varepsilon_3(s)\|^2_{H}}{|t-s|^{1+2\beta}}dsdt\\
&\leq& C_3(\beta).
\end{eqnarray*}
Collecting the above estimates, we conclude that for $\beta\in (0,\frac{1}{2})$,
\begin{eqnarray*}
\sup_\varepsilon\|u^{\varepsilon}_{n}\|^2_{W^{\beta,2}([0,T];H^{-1}(\mathbb{T}^N))}\leq C(\beta).
\end{eqnarray*}
{\color{rr}Taking into account that $u^{\varepsilon}_{n}\in C([0,T];H)\cap L^2([0,T];H^1(\mathbb{T}^N))$ and applying Lemma \ref{lem-1-1} with $B_0=H^1(\mathbb{T}^N)$, $B_1=H^{-1}(\mathbb{T}^N)$ and $B=H$, we conclude the desired result.}

\end{proof}

{\color{rr}Arguing similarly to the above, on the basis of the estimates (\ref{e-24}) and (\ref{e-25}), we have $\{u^{\varepsilon}_{n}; \varepsilon >0\}$ are bounded in $W^{\beta,p}([0,T];H^{-1}(\mathbb{T}^N))\cap C([0,T];H)$ for any $p\geq 2$. Choosing $\beta p>1$ and applying Lemma \ref{lem-2-1} with $B_2=H, B_3=H^{-1}(\mathbb{T}^N)$, we get}
\begin{prp}\label{prp-5}
For any $n\geq 1$, $\{u^{h_{\varepsilon}}_{n}; \varepsilon >0\}$ is compact in $L^2([0,T];H)\cap C([0,T];H^{-1}(\mathbb{T}^N))$.
\end{prp}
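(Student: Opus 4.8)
The plan is to deduce the statement by stitching together Proposition~\ref{prpp-1}, which already gives relative compactness of $\{u^\varepsilon_n;\varepsilon>0\}$ in $L^2([0,T];H)$, with a separate relative-compactness statement in $C([0,T];H^{-1}(\mathbb{T}^N))$; once both are in hand, a diagonal extraction shows that every sequence $\varepsilon_j\to0$ admits a subsequence along which $u^{\varepsilon_j}_n$ converges simultaneously in $L^2([0,T];H)$ and in $C([0,T];H^{-1}(\mathbb{T}^N))$ — necessarily to the same limit — which is exactly compactness in the intersection space with its natural norm. For the $C([0,T];H^{-1})$ part I would apply Lemma~\ref{lem-2-1} with $B_2=H$ and $B_3=H^{-1}(\mathbb{T}^N)$, the embedding $H=L^2(\mathbb{T}^N)\hookrightarrow H^{-1}(\mathbb{T}^N)$ being compact by Rellich's theorem; this requires only a uniform-in-$\varepsilon$ bound in $C([0,T];H)$ together with a uniform-in-$\varepsilon$ bound in $W^{\beta,p}([0,T];H^{-1}(\mathbb{T}^N))$ for some $p>1$ and $\beta\in(0,1)$ with $\beta p>1$.

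The first ingredient is immediate: the energy estimate (\ref{e-24}) gives $\sup_\varepsilon\sup_{t\in[0,T]}\|u^\varepsilon_n(t)\|_H<\infty$, and $u^\varepsilon_n\in C([0,T];H)$ by the well-posedness in \cite{DZZ} recalled above. For the second, I would reuse the decomposition from the proof of Proposition~\ref{prpp-1}, $u^\varepsilon_n(t)=I^\varepsilon_1+I^\varepsilon_2(t)+I^\varepsilon_3(t)$ with $I^\varepsilon_1=u_{0,n}$, $I^\varepsilon_2(t)=\int_0^t\Delta A_n(u^\varepsilon_n)\,ds$ and $I^\varepsilon_3(t)=\int_0^t\sum_{k\ge1}g^k_n(x,u^\varepsilon_n)h^\varepsilon_k(s)\,ds$, and observe that both non-constant pieces are in fact uniformly $\tfrac12$-Hölder continuous into $H^{-1}(\mathbb{T}^N)$. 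Indeed, using $\|\Delta A_n(u^\varepsilon_n)\|_{H^{-1}(\mathbb{T}^N)}\le\|\nabla A_n(u^\varepsilon_n)\|_H$ and Hölder's inequality in time, $\|I^\varepsilon_2(t)-I^\varepsilon_2(s)\|_{H^{-1}(\mathbb{T}^N)}\le|t-s|^{1/2}\big(\int_0^T\|\nabla A_n(u^\varepsilon_n)\|_H^2\,dl\big)^{1/2}$, which is bounded uniformly in $\varepsilon$ by (\ref{e-25}); and using the growth bound (\ref{equ-28}) together with $h^\varepsilon\in S_M$, $\|I^\varepsilon_3(t)-I^\varepsilon_3(s)\|_H\le K\big(1+\sup_{t}\|u^\varepsilon_n(t)\|_H\big)|t-s|^{1/2}\big(\int_0^T|h^\varepsilon(l)|_U^2\,dl\big)^{1/2}\le KM^{1/2}\big(1+\sup_t\|u^\varepsilon_n(t)\|_H\big)|t-s|^{1/2}$, bounded uniformly in $\varepsilon$ by (\ref{e-24}). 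Hence $\sup_\varepsilon\|u^\varepsilon_n\|_{C^{1/2}([0,T];H^{-1}(\mathbb{T}^N))}<\infty$, and since $C^{1/2}([0,T];H^{-1}(\mathbb{T}^N))\subset W^{\beta,p}([0,T];H^{-1}(\mathbb{T}^N))$ for every $\beta\in(0,1/2)$ and every $p\ge1$, the required fractional-Sobolev bound holds for all $p\ge2$.

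It then remains to choose $\beta\in(0,1/2)$ and $p>1$ with $\beta p>1$ (for instance $\beta=2/5$, $p=3$) and invoke Lemma~\ref{lem-2-1} to get relative compactness of $\{u^\varepsilon_n;\varepsilon>0\}$ in $C([0,T];H^{-1}(\mathbb{T}^N))$; combining with Proposition~\ref{prpp-1} finishes the proof as described in the first paragraph. I do not expect a genuine obstacle here — the argument is a routine upgrade of Proposition~\ref{prpp-1}. The one point requiring mild care is that the drift contribution $\Delta A_n(u^\varepsilon_n)$ is controlled only in $L^2$ in time through (\ref{e-25}), not in $L^p$, so one cannot directly obtain higher time-integrability of $I^\varepsilon_2$ in $H^{-1}$; the fix is to note that $I^\varepsilon_2$ is a time integral, so Hölder's inequality alone already yields $\tfrac12$-Hölder continuity in $H^{-1}$, which is more than sufficient to feed Lemma~\ref{lem-2-1}.
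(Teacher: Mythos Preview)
Your proposal is correct and follows essentially the same approach as the paper: combine the $L^2([0,T];H)$ compactness from Proposition~\ref{prpp-1} with compactness in $C([0,T];H^{-1}(\mathbb{T}^N))$ obtained via Lemma~\ref{lem-2-1} applied with $B_2=H$, $B_3=H^{-1}(\mathbb{T}^N)$, using the uniform $C([0,T];H)$ bound from (\ref{e-24}) and a uniform $W^{\beta,p}([0,T];H^{-1})$ bound with $\beta p>1$. The paper's argument is terser --- it simply asserts the $W^{\beta,p}$ bound ``for any $p\ge2$'' by arguing ``similarly to the above'' --- whereas you fill this in by passing through uniform $C^{1/2}$ H\"older continuity of $I^\varepsilon_2$ and $I^\varepsilon_3$ into $H^{-1}$; this is a natural and clean way to justify the claim, and your closing remark correctly isolates the reason why the direct $W^{\beta,2}$ computation of Proposition~\ref{prpp-1} does not immediately upgrade to $p>2$ without the H\"older observation.
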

Now, we are in a position to prove the continuity of $\mathcal{G}^0$.
\begin{thm}\label{thmm-1}
Assume $h^\varepsilon\rightarrow h$ weakly in $L^2([0,T];U)$. Then $u^{h^{\varepsilon}}$ converges to $u^h$ in $L^1([0,T];L^1(\mathbb{T}^N))$, where $u^{h^{\varepsilon}}$ is the kinetic solution of (\ref{P-2}) with $h$ replaced by $h^{\varepsilon}$.
\end{thm}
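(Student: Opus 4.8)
The plan is to combine the uniform approximation result of Proposition \ref{prp-4} with a compactness argument for the regularised equations (\ref{eqq-1}). Fix $M<\infty$ with $h,h^\varepsilon\in S_M$, and for $n\geq 1$ use the decomposition
\begin{eqnarray*}
\|u^{h^{\varepsilon}}-u^h\|_{L^1([0,T];L^1(\mathbb{T}^N))}
&\leq& \|u^{h^{\varepsilon}}_n-u^{h^{\varepsilon}}\|_{L^1([0,T];L^1(\mathbb{T}^N))}
+\|u^{h^{\varepsilon}}_{n}-u^{h}_{n}\|_{L^1([0,T];L^1(\mathbb{T}^N))}\\
&&+\|u^h_n-u^h\|_{L^1([0,T];L^1(\mathbb{T}^N))}.
\end{eqnarray*}
By Proposition \ref{prp-4} the first and third terms are both dominated by $\sup_{g\in S_M}\|u^g_n-u^g\|_{L^1([0,T];L^1(\mathbb{T}^N))}$, which tends to $0$ as $n\to\infty$ \emph{uniformly in} $\varepsilon$ (the bound in Proposition \ref{prp-4}, coming from Theorem \ref{thm-2}(1), depends on the forcing only through $M$). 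Hence, given $\iota>0$, I first choose $n=n(\iota)$ so that these two terms together contribute at most $\iota$ for \emph{every} $\varepsilon>0$, and it remains to show, for this now-fixed $n$, that $\|u^{h^{\varepsilon}}_{n}-u^{h}_{n}\|_{L^1([0,T];L^1(\mathbb{T}^N))}\to 0$ as $\varepsilon\to 0$.

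For the middle term I work with the regularised Cauchy problem (\ref{eqq-1}), whose coefficients $A_n,g_n,u_{0,n}$ are smooth. By Propositions \ref{prpp-1} and \ref{prp-5} the family $\{u^{h^{\varepsilon}}_{n};\varepsilon>0\}$ is relatively compact in $L^2([0,T];H)\cap C([0,T];H^{-1}(\mathbb{T}^N))$ and, by (\ref{e-24})--(\ref{e-25}), bounded in $L^2([0,T];H^1(\mathbb{T}^N))$. Given any $\varepsilon_j\to 0$, pass to a subsequence (not relabelled) so that $u^{h^{\varepsilon_j}}_{n}\to v$ strongly in $L^2([0,T];H)$, weakly in $L^2([0,T];H^1(\mathbb{T}^N))$, and $h^{\varepsilon_j}\rightharpoonup h$ in $L^2([0,T];U)$. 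Then I pass to the limit in the weak form of (\ref{eqq-1}): since $A_n\in C^1$ with bounded derivative, $A_n(u^{h^{\varepsilon_j}}_{n})\to A_n(v)$ strongly in $L^2([0,T];H)$, while the uniform bound on $\nabla A_n(u^{h^{\varepsilon_j}}_{n})$ forces $\nabla A_n(u^{h^{\varepsilon_j}}_{n})\rightharpoonup\nabla A_n(v)$ weakly in $L^2([0,T];H)$, so the diffusion term converges. For the forcing term one splits, against a test function $\varphi$,
\[
\int^t_0\!\sum_{k\geq1}\big(g^k_n(\cdot,u^{h^{\varepsilon_j}}_{n})-g^k_n(\cdot,v)\big)h^{\varepsilon_j}_k(s)\,ds
+\int^t_0\!\sum_{k\geq1}g^k_n(\cdot,v)h^{\varepsilon_j}_k(s)\,ds,
\]
where, by (\ref{equ-29}) and Cauchy--Schwarz, the first integral is bounded by $K\|u^{h^{\varepsilon_j}}_{n}-v\|_{L^2([0,T];H)}|h^{\varepsilon_j}|_{L^2([0,T];U)}\to 0$, and the second converges to $\int^t_0\sum_k g^k_n(\cdot,v)h_k(s)\,ds$ because $\varrho\mapsto\int^t_0\sum_k g^k_n(\cdot,v)\varrho_k(s)\,ds$ is a bounded linear functional on $L^2([0,T];U)$ (using (\ref{equ-28})) and $h^{\varepsilon_j}\rightharpoonup h$. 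Thus $v$ solves (\ref{eqq-1}) driven by $h$, hence $v=u^h_n$ by uniqueness; as the limit is independent of the subsequence, the whole family converges, $u^{h^{\varepsilon}}_{n}\to u^h_n$ in $L^2([0,T];H)$, and hence in $L^1([0,T];L^1(\mathbb{T}^N))$ on the bounded set $[0,T]\times\mathbb{T}^N$. Choosing $\varepsilon_0$ so that this term is $<\iota$ for $\varepsilon<\varepsilon_0$, and letting $\iota\to 0$, yields the claim.

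The main obstacle is the passage to the limit in the product $g^k_n(\cdot,u^{h^{\varepsilon}}_{n})h^{\varepsilon}_k$, for which only \emph{weak} convergence of $h^\varepsilon$ is available: this is exactly what forces us to invoke the compactness statements of Propositions \ref{prpp-1}--\ref{prp-5}, so as to upgrade the convergence of $u^{h^{\varepsilon}}_{n}$ to \emph{strong} convergence in $L^2([0,T];H)$ and then apply the standard weak--strong pairing. A secondary point that needs care is ensuring the choice of $n$ in the first step is genuinely uniform in $\varepsilon$; this is guaranteed because the estimate of Proposition \ref{prp-4} depends on $h^\varepsilon$ only through the common bound $M$, so $\sup_{g\in S_M}\|u^g_n-u^g\|_{L^1([0,T];L^1(\mathbb{T}^N))}$ is the single quantity controlling both end terms.
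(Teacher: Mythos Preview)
Your proposal is correct and follows essentially the same route as the paper: the same three-term splitting, Proposition~\ref{prp-4} for the outer terms (uniformly over $S_M$), and for the middle term the compactness of Propositions~\ref{prpp-1}--\ref{prp-5} together with the Lipschitz property of $A_n$ and the weak--strong pairing to pass to the limit in the weak formulation and identify the limit by uniqueness. The only cosmetic difference is that the paper tests the diffusion term directly against $\Delta\phi$ rather than arguing via weak convergence of $\nabla A_n(u^{h^{\varepsilon_j}}_n)$, but the substance is identical.
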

\begin{proof}
Fix any $n\geq 1$.
For the solution $u^{h^{\varepsilon}}_n$ of (\ref{eqq-1}), we shall firstly prove that when $h^{\varepsilon}\rightarrow h$ weakly in $L^2([0,T];U)$, we have
\begin{eqnarray}\label{e-37}
\lim_{\varepsilon\rightarrow 0}\|u^{h^{\varepsilon}}_{n}-u^h_{n}\|_{L^1([0,T];L^1(\mathbb{T}^N))}=0,
\end{eqnarray}
where $u^{h}_{n}$ is the solution of (\ref{eqq-1}) with $h^{\varepsilon}$ replaced by $h$. This will be achieved if we show that for any sequence ${\varepsilon}_m\rightarrow 0$, one can find a subsequence ${\varepsilon}_{m_{k}}\rightarrow  0$ such that
\begin{eqnarray}\label{e-34}
\lim_{k\rightarrow \infty}\|u^{h^{{\varepsilon}_{m_{k}}}}_{n}-u^h_{n}\|_{L^1([0,T];L^1(\mathbb{T}^N))}=0.
\end{eqnarray}

From (\ref{e-24}), (\ref{e-26}) and Proposition \ref{prp-5}, we know that for sequence ${\varepsilon}_m\rightarrow 0$, there exists a subsequence \{$m_k$, $k\geq 1$\} and an element $u^{\ast}\in L^{\infty}([0,T];H)\cap L^2([0,T];H)\cap L^2([0,T];H^1)\cap C([0,T];H^{-1})$ such that
\begin{eqnarray}\label{e-31}
u^{h^{{\varepsilon}_{m_k}}}_{n}&\rightarrow & u^{\ast}\ {\rm{strongly\  in\ }} L^2([0,T];H)\cap C([0,T];H^{-1}),\\
\label{e-32}
u^{h^{{\varepsilon}_{m_k}}}_{n} &\rightarrow& u^{\ast}\ {\rm{weakly\ in}} \ L^2([0,T];H^1).
\end{eqnarray}
Clearly, we also have $h^{{\varepsilon}_{m_k}}\rightarrow h$ weakly in $L^2([0,T];U)$.
Thus, we only need to prove $u^{\ast}=u^h_{n}$.

From (\ref{eqq-1}), we know that for a test function $\phi\in C^{\infty}(\mathbb{T}^N)$, it holds that
\begin{eqnarray}\label{e-33}
\langle u^{h^{{\varepsilon}_{m_k}}}_{n}(t), \phi\rangle-\langle u_{0,n}, \phi\rangle=\int^t_0\langle A_n(u^{h^{{\varepsilon}_{m_k}}}_{n}), \Delta \phi\rangle ds+\int^t_0\langle \sum_{k\geq 1}g^k_n(x,u^{h^{{\varepsilon}_{m_k}}}_{n}(s,x))h^{{\varepsilon}_{m_k}}_k, \phi\rangle ds.
\end{eqnarray}
Due to (\ref{e-31}), we get
\begin{eqnarray*}
|\langle u^{h^{{\varepsilon}_{m_k}}}_{n}(t)-u^{\ast}(t), \phi\rangle|\rightarrow 0, \quad {\rm{as}}\ k\rightarrow \infty.
\end{eqnarray*}
Using (\ref{e-31}) and the Lipschitz property of $A_n$, we deduce that
\begin{eqnarray*}
&&\int^t_0\langle A_n(u^{h^{{\varepsilon}_{m_k}}}_{n})- A_n(u^{\ast}),  \Delta\phi\rangle ds\\
&\leq& C(n,m,K)\|\Delta\phi\|_{L^{\infty}}T^{\frac{1}{2}}\Big(\int^t_0\|u^{h^{{\varepsilon}_{m_k}}}_{n}-u^{\ast}\|^2_Hds\Big)^{\frac{1}{2}}\rightarrow 0.
\end{eqnarray*}
Note that
\begin{eqnarray*}
\int^t_0\langle \sum_{k\geq 1}\Big(g^k_n(x,u^{h^{{\varepsilon}_{m_k}}}_n)h^{{\varepsilon}_{m_k}}_k-g^k_n(x,u^{\ast})h_k\Big), \phi\rangle ds&=&\int^t_0 \langle \sum_{k\geq 1}(g^k_n(x,u^{h^{{\varepsilon}_{m_k}}}_n)-g^k_n(x,u^{\ast}))h^{{\varepsilon}_{m_k}}_k, \phi\rangle ds\\
&& +\int^t_0\langle \sum_{k\geq 1}g^k_n(x,u^{\ast})(h^{{\varepsilon}_{m_k}}_k-h_k), \phi\rangle ds.
\end{eqnarray*}
With the aid of (\ref{e-31}), we get
\begin{eqnarray*}
&&\int^t_0 \langle \sum_{k\geq 1}(g^k_n(x,u^{h^{{\varepsilon}_{m_k}}}_n)-g^k_n(x,u^{\ast}))h^{{\varepsilon}_{m_k}}_k, \phi\rangle ds\\
&\leq& \|\phi\|_{L^{\infty}}\int^t_0\int_{\mathbb{T}^N}\left(\sum_{k\geq 1}|g^k_n(x,u^{h^{{\varepsilon}_{m_k}}}_n)-g^k_n(x,u^{\ast})|^2\right)^{\frac{1}{2}}\left(\sum_{k\geq 1}|h^{{\varepsilon}_{m_k}}_k|^2\right)^{\frac{1}{2}} ds\\
&\leq& CK\left(\int^t_0\|u^{h^{{\varepsilon}_{m_k}}}_n- u^{\ast}\|^2_Hds\right)^{\frac{1}{2}}\left(\int^t_0|h^{{\varepsilon}_{m_k}}(s)|_Uds\right)^{\frac{1}{2}}\\
&\leq & CKM^{\frac{1}{2}}\left(\int^t_0\|u^{h^{{\varepsilon}_{m_k}}}_n- u^{\ast}\|^2_Hds\right)^{\frac{1}{2}}\rightarrow 0.
\end{eqnarray*}
Since $h^{{\varepsilon}_{m_k}}\rightarrow h$ weakly in $L^2([0,T];U)$, we get
\begin{eqnarray*}
\int^t_0\langle \sum_{k\geq 1}g^k_n(x,u^{\ast})(h^{{\varepsilon}_{m_k}}_k-h_k), \phi\rangle ds\rightarrow 0.
\end{eqnarray*}
Thus, let $k\rightarrow \infty$ in (\ref{e-33}) to obtain
\begin{eqnarray*}
\langle u^{\ast}(t), \phi\rangle-\langle u_{0,n}, \phi\rangle=\int^t_0\langle A_n(u^{\ast}), \Delta\phi\rangle dt+\int^t_0\langle g_n(u^{\ast})h, \phi\rangle dt,
\end{eqnarray*}
which means that $u^{\ast}$ is the solution to (\ref{eqq-1}) with $h^{\varepsilon}$ replaced by $h$. By the uniqueness of (\ref{eqq-1}), we conclude $u^{\ast}=u^h_n$. Thus, (\ref{e-34}) is proved, which implies (\ref{e-37}) holds.

Note that for any $\varepsilon>0$ and $n\geq 1$, we have
\begin{eqnarray}\notag
&&\|u^{h^{\varepsilon}}-u^h\|_{L^1([0,T];L^1(\mathbb{T}^N))}\\
\label{eqq-6}
&\leq& \|u^{h^{\varepsilon}}_n-u^{h^{\varepsilon}}\|_{L^1([0,T];L^1(\mathbb{T}^N))}
+\|u^{h^{\varepsilon}}_{n}-u^{h}_{n}\|_{L^1([0,T];L^1(\mathbb{T}^N))}+\|u^h_n-u^h\|_{L^1([0,T];L^1(\mathbb{T}^N))}.
\end{eqnarray}
For any $\iota>0$, by Proposition \ref{prp-4}, there exists $N_{0}$ such that for all $\varepsilon>0$,
$$\|u^{h^{\varepsilon}}_{N_0}-u^{h^{\varepsilon}}\|_{L^1([0,T];L^1(\mathbb{T}^N))}\leq \frac{\iota}{3} \,\,\mbox{and}\,\, \|u^h_{N_0}-u^h\|_{L^1([0,T];L^1(\mathbb{T}^N))}\leq \frac{\iota}{3}.$$
Letting $n=N_0$, we deduce from (\ref{eqq-6}) that
\begin{eqnarray*}\label{eqq-16}
\|u^{h^{\varepsilon}}-u^h\|_{L^1([0,T];L^1(\mathbb{T}^N))}
\leq \frac{2\iota}{3}
+\|u^{h^{\varepsilon}}_{N_0}-u^{h}_{N_0}\|_{L^1([0,T];L^1(\mathbb{T}^N))}.
\end{eqnarray*}
Using (\ref{e-37}), we know that there are $\varepsilon_0>0$ such that for any $\varepsilon\leq \varepsilon_0$, it holds that
\begin{eqnarray*}
\|u^{h^{\varepsilon}}_{N_0}-u^{h}_{N_0}\|_{L^1([0,T];L^1(\mathbb{T}^N))}\leq \frac{\iota}{3}.
\end{eqnarray*}
Thus, we conclude that
\begin{eqnarray*}
\lim_{\varepsilon\rightarrow 0}\|u^{h^{\varepsilon}}-u^h\|_{L^1([0,T];L^1(\mathbb{T}^N))}\leq \iota.
\end{eqnarray*}
Since the constant $\iota$ is arbitrary, we obtain the desired result.
\end{proof}

\section{Large deviations}

For any family $\{h^{\varepsilon}; 0<\varepsilon\leq 1\}\subset \mathcal{A}_M$ with $h^{\varepsilon}(t)=\sum_{k\geq 1}h^{\varepsilon}_k(t)e_k$, we consider the following equation
\begin{eqnarray}\label{P-20}
\left\{
  \begin{array}{ll}
  d\bar{u}^{\varepsilon}=\Delta A(\bar{u}^{\varepsilon})dt+\sum_{k\geq1}g^k(x,\bar{u}^{\varepsilon}) h^{\varepsilon}_k(t)dt+\sqrt{\varepsilon}\sum_{k\geq1}g^k(x,\bar{u}^{\varepsilon}) d\beta_k(t),\\
\bar{u}^{\varepsilon}(0)=u_0.
  \end{array}
\right.
\end{eqnarray}
Combining Theorem \ref{thm-4}, Theorem \ref{thm-8} and Theorem \ref{thm-1}, we know that there exists (\ref{P-20}) admits a unique kinetic solution $\bar{u}^{\varepsilon}$ with initial data $u_0\in L^{m+1}(\mathbb{T}^N)$. Then, based on the definition of kinetic solution, it holds that
\begin{eqnarray}\label{e-36}
\sup_{0\leq \varepsilon\leq 1}\mathbb{E}\left(\int^T_0\int_{\mathbb{T}^N}\int_{\mathbb{R}}|\xi|^{m+1}d\nu^{\varepsilon}_{x,t}(\xi)dxdt\right)\leq C_m,
\end{eqnarray}
and there exists a kinetic measure $\bar{m}^\varepsilon\in\mathcal{M}^+_0(\mathbb{T}^N\times [0,T]\times \mathbb{R})$ such that $\bar{f}^\varepsilon:=I_{\bar{u}^{\varepsilon}>\xi}$ fulfills that for all $\varphi\in C_c^1(\mathbb{T}^N\times[0,T)\times \mathbb{R})$,
\begin{eqnarray*}\notag
&&\int^T_0\langle \bar{f}^\varepsilon(t), \partial_t \varphi(t)\rangle dt+\langle f_0, \varphi(0)\rangle +\int^T_0\langle \bar{f}^\varepsilon(t), a^2(\xi)\Delta \varphi (t)\rangle dt\\ \notag
&=& \int^T_0\int_{\mathbb{T}^N}\int_{\mathbb{R}}\partial_{\xi}\varphi (x,t,\xi)|\nabla \Psi(\xi)|^2d\nu^{\varepsilon}_{x,t}(\xi)dxdt\\ \notag
&& -\sqrt{\varepsilon}\sum_{k\geq 1}\int^T_0\int_{\mathbb{T}^N}\int_{\mathbb{R}} g^k(x,\xi)\varphi(x,t,\xi)d\nu^{\varepsilon}_{x,t}(\xi)dxd\beta_k(t)\\ \notag
&&\ -\frac{\varepsilon}{2}\int^T_0\int_{\mathbb{T}^N}\int_{\mathbb{R}}\partial_{\xi}\varphi(x,t,\xi)G^2(x,\xi) d\nu^{\varepsilon}_{x,t}(\xi)dxdt\\
\label{equ-3}
&&\ -\sum_{k\geq 1}\int^T_0\int_{\mathbb{T}^N}\int_{\mathbb{R}}\int_{\mathbb{R}}\varphi(x,t,\xi) g^k(x,\xi)h^{\varepsilon}_k(t)d\nu^{\varepsilon}_{x,t}(\xi)dxdt + \bar{m}^\varepsilon(\partial_{\xi} \varphi), \quad a.s.,
\end{eqnarray*}
 where $G^2:=\sum_{k\geq1}|g^k|^2$ and $\nu^{\varepsilon}_{x,t}(\xi)=-\partial_{\xi}f=\delta_{\bar{u}^{\varepsilon}(x,t)=\xi}$. According to the definition of $\mathcal{G}^{\varepsilon}$, it is clear  that $\mathcal{G}^{\varepsilon}\Big(W(\cdot)+\frac{1}{\sqrt{\varepsilon}}\int^{\cdot}_0h^{\varepsilon}(s)ds\Big)=\bar{u}^{\varepsilon}(\cdot)$.

Due to Theorem \ref{thm-7} (the sufficient condition B) and Theorem \ref{thmm-1}, we only need to prove the following result to establish the main result.
\begin{thm}\label{thm-5}
For every $M<\infty$, let $\{h^{\varepsilon}: 0<\varepsilon\leq 1\}$ $\subset \mathcal{A}_M$. Then
\[
\Big\|\mathcal{G}^{\varepsilon}\Big(W(\cdot)+\frac{1}{\sqrt{\varepsilon}}\int^{\cdot}_0h^\varepsilon(s)ds\Big)-\mathcal{G}^{0}\Big(\int^{\cdot}_0h^\varepsilon(s)ds\Big)\Big\|_{L^1([0,T];L^1(\mathbb{T}^N))}\rightarrow 0\quad {\rm{in \ probability}}, \quad {\rm{as}} \ \ \varepsilon\rightarrow 0.
\]

\end{thm}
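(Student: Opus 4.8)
The plan is to compare $\bar u^{\varepsilon}:=\mathcal{G}^{\varepsilon}\big(W(\cdot)+\tfrac{1}{\sqrt{\varepsilon}}\int_0^{\cdot}h^{\varepsilon}(s)\,ds\big)$, i.e. the kinetic solution of (\ref{P-20}), with $u^{h^{\varepsilon}}:=\mathcal{G}^{0}\big(\int_0^{\cdot}h^{\varepsilon}(s)\,ds\big)$, i.e. the kinetic solution of the skeleton equation (\ref{P-2}) with $h=h^{\varepsilon}$, by interpolating through the regularized coefficients $(A_n,g_n,u_{0,n})$ of (\ref{ee-4})--(\ref{e-23}). For each $n\ge1$ let $\bar u^{\varepsilon}_n$ solve (\ref{P-20}) with $(A,g,u_0)$ replaced by $(A_n,g_n,u_{0,n})$, and let $u^{h^{\varepsilon}}_n$ solve the skeleton approximation (\ref{eqq-1}); by the uniformly parabolic theory of \cite{DZZ}, both belong to $C([0,T];H)\cap L^2([0,T];H^1)$ and enjoy, uniformly in $\varepsilon$ and $n$, the a priori bounds (\ref{e-24})--(\ref{e-26}) (and, for $\bar u^{\varepsilon}_n$, their expectation counterparts). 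Abbreviating $\|\cdot\|_{L^1L^1}:=\|\cdot\|_{L^1([0,T];L^1(\mathbb{T}^N))}$, the triangle inequality gives
\[
\|\bar u^{\varepsilon}-u^{h^{\varepsilon}}\|_{L^1L^1}\le \|\bar u^{\varepsilon}-\bar u^{\varepsilon}_n\|_{L^1L^1}+\|\bar u^{\varepsilon}_n-u^{h^{\varepsilon}}_n\|_{L^1L^1}+\|u^{h^{\varepsilon}}_n-u^{h^{\varepsilon}}\|_{L^1L^1},
\]
and I would handle the three terms as follows: the last uniformly in $\varepsilon$ by Proposition \ref{prp-4}, the first uniformly in $\varepsilon$ by a stochastic stability estimate, and the middle one for each fixed $n$ as $\varepsilon\to0$.

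For the last term, Proposition \ref{prp-4} shows $\varrho_n:=\sup_{h\in S_M}\|u^h_n-u^h\|_{L^1L^1}\to0$; since $h^{\varepsilon}(\omega)\in S_M$ a.s., one has $\|u^{h^{\varepsilon}}_n-u^{h^{\varepsilon}}\|_{L^1L^1}\le\varrho_n$ pathwise. For the first term I would prove the stochastic analogue of Proposition \ref{prp-4}, namely $\sigma_n:=\sup_{0<\varepsilon\le1}\mathbb{E}\,\|\bar u^{\varepsilon}-\bar u^{\varepsilon}_n\|_{L^1L^1}\to0$. This is the stability of the stochastic kinetic solution map with respect to its coefficients: one repeats, now in expectation, the doubling-of-variables comparison of Proposition \ref{prp-1} for $\bar u^{\varepsilon}$ and $\bar u^{\varepsilon}_n$ (which carry the same Brownian noise and the same control $h^{\varepsilon}$), the extra It\^o correction and the stochastic integral terms being treated exactly as in \cite{DGG}; this produces the analogue of Theorem \ref{thm-2}(2) with $d(g,g_n)$, $\|u_0-u_{0,n}\|_{L^{m+1}}$ and the mollification error $\lambda=8/n$ on the right-hand side. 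Because all the energy bounds (\ref{e-24})--(\ref{e-36}) and the convergences (\ref{e-21}), (\ref{e-29}) are uniform in $\varepsilon$, so is the resulting bound, whence $\sigma_n\to0$. Equivalently, this is already contained in the construction of $\bar u^{\varepsilon}$ in the proof of Theorem \ref{thm-1}, which realizes $\bar u^{\varepsilon}$ as an $L^1$-limit of the $\bar u^{\varepsilon}_n$ at a rate governed only by $\varepsilon$-independent quantities.

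For the middle term, with $n$ fixed, I would use that $(A_n,g_n)$ are smooth with bounded first derivatives and $a_n^2\ge 4/n^2>0$. Setting $w^{\varepsilon}_n:=\bar u^{\varepsilon}_n-u^{h^{\varepsilon}}_n$, so that $w^{\varepsilon}_n(0)=0$, and applying It\^o's formula to $\|w^{\varepsilon}_n(t)\|^2_{H^{-1}(\mathbb{T}^N)}$ in the Gelfand triple $H^1\subset H\subset H^{-1}$, the drift-difference term yields $-\langle A_n(\bar u^{\varepsilon}_n)-A_n(u^{h^{\varepsilon}}_n),w^{\varepsilon}_n\rangle\le-\tfrac{4}{n^2}\|w^{\varepsilon}_n\|_H^2$ by monotonicity and uniform ellipticity of $A_n$, plus a remainder bounded by $\tfrac{2}{n^2}\|w^{\varepsilon}_n\|_H^2+C(n)\|w^{\varepsilon}_n\|^2_{H^{-1}}$ (Lipschitz continuity of $A_n$); the control-difference term is bounded by $\tfrac{1}{n^2}\|w^{\varepsilon}_n\|_H^2+C(n,K)|h^{\varepsilon}(t)|_U^2\|w^{\varepsilon}_n\|^2_{H^{-1}}$ via the Lipschitz bound on $g_n$; and the It\^o correction together with the martingale term carry the factor $\varepsilon$ and are controlled, using (\ref{equ-28}) and (\ref{e-24}), by $\varepsilon\,C(n)\int_0^T(1+\|\bar u^{\varepsilon}_n\|_H^2)\,ds$ after a Burkholder--Davis--Gundy estimate and absorption. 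Absorbing the $\|w^{\varepsilon}_n\|_H^2$ remainders into the dissipative term and applying Gronwall's inequality with $\int_0^T|h^{\varepsilon}(s)|^2_U\,ds\le M$ gives $\mathbb{E}\sup_{t\in[0,T]}\|w^{\varepsilon}_n(t)\|^2_{H^{-1}}\le \varepsilon\,C(n,M,T,\|u_0\|_{L^{m+1}})$. Interpolating $\|w^{\varepsilon}_n\|_H\le\|w^{\varepsilon}_n\|_{H^{-1}}^{1/2}\|w^{\varepsilon}_n\|_{H^1}^{1/2}$, invoking the uniform-in-$\varepsilon$ bound on $\mathbb{E}\int_0^T\|w^{\varepsilon}_n\|^2_{H^1}$ from (\ref{e-24})--(\ref{e-26}), and using $\|\cdot\|_{L^1(\mathbb{T}^N)}\le\|\cdot\|_{L^2(\mathbb{T}^N)}$ on the unit torus, one obtains $\mathbb{E}\,\|w^{\varepsilon}_n\|_{L^1L^1}\le C(n)\,\varepsilon^{1/4}\to0$ as $\varepsilon\to0$, hence $\mathbb{P}(\|w^{\varepsilon}_n\|_{L^1L^1}>\delta/3)\to0$ by Markov's inequality.

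Putting the pieces together: given $\delta>0$ and $\iota>0$, choose $n$ with $3\sigma_n/\delta<\iota$ and $\varrho_n<\delta/3$; then
\[
\mathbb{P}\big(\|\bar u^{\varepsilon}-u^{h^{\varepsilon}}\|_{L^1L^1}>\delta\big)\le \frac{3\sigma_n}{\delta}+\mathbb{P}\big(\|\bar u^{\varepsilon}_n-u^{h^{\varepsilon}}_n\|_{L^1L^1}>\delta/3\big)+I_{\{\varrho_n\ge\delta/3\}},
\]
and letting $\varepsilon\to0$ gives $\limsup_{\varepsilon\to0}\mathbb{P}(\|\bar u^{\varepsilon}-u^{h^{\varepsilon}}\|_{L^1L^1}>\delta)\le\iota$; since $\iota>0$ is arbitrary, Theorem \ref{thm-5} follows. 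I expect the main obstacle to be the uniform-in-$\varepsilon$ stochastic stability estimate $\sigma_n\to0$: one must carry through the kinetic doubling-of-variables computation in the stochastic setting while verifying that every constant is independent of the noise intensity $\varepsilon$. The middle-term estimate, by contrast, is a fairly routine energy argument once the problem has been mollified into a uniformly parabolic one.
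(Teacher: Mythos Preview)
Your approach is correct but genuinely different from the paper's. The paper does \emph{not} interpolate through the regularized coefficients $(A_n,g_n,u_{0,n})$; instead it compares $\bar u^{\varepsilon}$ and $v^{\varepsilon}:=u^{h^{\varepsilon}}$ directly at the kinetic level, via the doubling-of-variables scheme already set up in Proposition~\ref{prp-1}. Writing $f_1=I_{\bar u^{\varepsilon}>\xi}$ and $f_2=I_{v^{\varepsilon}>\xi}$ and combining the strengthened kinetic formulations with It\^o's formula, the paper arrives at an inequality of the form (\ref{qeq-21}) in which the only terms not already present in the deterministic comparison are (i) an It\^o correction $\tilde J_4$ carrying the factor $\varepsilon$, and (ii) the stochastic integrals $\tilde J_9,\tilde{\bar J}_9$ carrying $\sqrt{\varepsilon}$. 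After Gronwall, these are shown to vanish in expectation by choosing the specific scaling $\delta=\varepsilon^{3/4}$, $\gamma=\varepsilon^{1/(4N+1)}$ and invoking the uniform moment bound (\ref{e-36}) together with the BDG inequality. No auxiliary regularized process $\bar u^{\varepsilon}_n$, and in particular no $H^{-1}$ monotonicity argument, is used.

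Your three-term decomposition trades one direct kinetic comparison for three more elementary pieces: the outer two are stability-in-coefficients estimates (your $\sigma_n$ and Proposition~\ref{prp-4}), while the middle one exploits uniform parabolicity to run an $H^{-1}$ energy method followed by interpolation. This buys you a more modular argument---the middle term is genuinely simpler than the paper's direct computation---at the cost of having to establish the stochastic stability $\sigma_n\to0$ uniformly in $\varepsilon$ from scratch, which the paper never needs and never states. One minor correction: your appeal to ``the proof of Theorem~\ref{thm-1}'' for the construction of $\bar u^{\varepsilon}$ is misplaced, since Theorem~\ref{thm-1} concerns the \emph{skeleton} equation; for $\bar u^{\varepsilon}$ you would need the analogous stochastic-plus-control Cauchy-sequence argument, which is routine (combining the methods of \cite{DGG} with the control-term estimates of Theorem~\ref{thm-2}) but is not written out anywhere in the paper.
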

\begin{proof}
Recall that $\bar{u}^\varepsilon=\mathcal{G}^{\varepsilon}\Big(W(\cdot)+\frac{1}{\sqrt{\varepsilon}}\int^{\cdot}_0h^\varepsilon(s)ds\Big)$ is the kinetic solution to (\ref{P-20}) with the corresponding kinetic measure ${m}^{\varepsilon}_1$. Moreover, $v^{\varepsilon}:=\mathcal{G}^{0}\Big(\int^{\cdot}_0h^\varepsilon(s)ds\Big)$ is the kinetic solution to the skeleton equation (\ref{P-2}) with $h$ replaced by $h^\varepsilon$ and the corresponding kinetic measure is denoted by $m^{\varepsilon}_2$. Also, we have
\begin{eqnarray}\label{e-36-1}
\sup_{0\leq \varepsilon\leq 1}\mathbb{E}\int^T_0\|v^{\varepsilon}(t)\|^{m+1}_{L^{m+1}(\mathbb{T}^N)}dt<+\infty.
\end{eqnarray}

Denote $f_1(x,t,\xi):=I_{\bar{u}^\varepsilon(x,t)>\xi}$ and $f_2(y,t,\zeta):=I_{v^{\varepsilon}(y,t)>\zeta}$. Using the same procedure as Lemma \ref{lem-1}, we have for all $\varphi_1(x,\xi)\in C^{\infty}_c(\mathbb{T}^N_x\times \mathbb{R}_{\xi})$,
\begin{eqnarray*}
\langle {f}^{\pm}_1(t), \varphi_1\rangle&=&\langle {f}_{1,0}, \varphi_1\rangle \delta_0([0,t])+\int^t_0\langle {f}_1(s), a^2(\xi)\Delta_x \varphi_1 (x,\xi)\rangle ds\\
&& -\int^t_0\int_{\mathbb{T}^N}\int_{\mathbb{R}}\partial_{\xi}\varphi_1 (x,\xi)|\nabla_x \Psi(\xi)|^2d{\nu}^{1,\varepsilon}_{x,s}(\xi)dxds\\ \notag
&& +\sqrt{\varepsilon}\sum_{k\geq 1}\int^t_0\int_{\mathbb{T}^N}\int_{\mathbb{R}} g^k(x,\xi)\varphi_1(x,\xi)d{\nu}^{1,\varepsilon}_{x,s}(\xi)dxd\beta_k(s)\\ \notag
&& +\frac{\varepsilon}{2}\int^t_0\int_{\mathbb{T}^N}\int_{\mathbb{R}}\partial_{\xi}\varphi_1(x,\xi)G^2(x,\xi)d{\nu}^{1,\varepsilon}_{x,s}(\xi)dxds\\ \notag
&&+\sum_{k\geq 1}\int^t_0\int_{\mathbb{T}^N}\int_{\mathbb{R}}\varphi_1(x,\xi) g^k(x,\xi)h^{\varepsilon}_k(s)d{\nu}^{1,\varepsilon}_{x,s}(\xi)dxds-\langle {m}^\varepsilon_1,\partial_{\xi} \varphi_1\rangle([0,t])\\
&=:& \langle m^*_1, \partial_{\xi}\varphi_1\rangle([0,t])+F_1(t), \quad  a.s.,
\end{eqnarray*}
with
 \begin{eqnarray*}
 \langle m^*_1, \partial_{\xi}\varphi_1\rangle([0,t])&=&\langle f_{1,0}, \varphi_1\rangle\delta_0([0,t])+\int^t_0\langle {f}_1(s), a^2(\xi)\Delta_x \varphi_1 (x,\xi)\rangle ds\\
&& -\int^t_0\int_{\mathbb{T}^N}\int_{\mathbb{R}}\partial_{\xi}\varphi_1 (x,\xi)|\nabla_x \Psi(\xi)|^2d{\nu}^{1,\varepsilon}_{x,s}(\xi)dxds\\
 && +\frac{\varepsilon}{2}\int^t_0\int_{\mathbb{T}^N}\int_{\mathbb{R}}\partial_{\xi}\varphi_1(x,\xi)G^2(x,\xi)d{\nu}^{1,\varepsilon}_{x,s}(\xi)dxds\\ \notag
&&+\sum_{k\geq 1}\int^t_0\int_{\mathbb{T}^N}\int_{\mathbb{R}}\varphi_1(x,\xi) g^k(x,\xi)h^{\varepsilon}_k(s)d{\nu}^{1,\varepsilon}_{x,s}(\xi)dxds- \langle m^{\varepsilon}_1,\partial_{\xi} \varphi_1\rangle([0,t]),
 \end{eqnarray*}
 and
  \begin{eqnarray*}
 F_1(t)=\sqrt{\varepsilon}\sum_{k\geq 1}\int^t_0\int_{\mathbb{T}^N}\int_{\mathbb{R}}g^k(x,\xi)\varphi_1(x,\xi)d\nu^{1,\varepsilon}_{x,s}(\xi)dxd\beta_k(s).
 \end{eqnarray*}
where ${f}_{1,0}=I_{u_0>\xi}$ and ${\nu}^{1,\varepsilon}_{x,s}(\xi)=-\partial_{\xi}{f}^{\pm}_1(s,x,\xi)=\partial_{\xi}\bar{f}^{\pm}_1(s,x,\xi)=\delta_{\bar{u}^{\varepsilon,\pm}(x,t)=\xi}$.
 Moreover, referring to Remark 12 in \cite{D-V-1}, it gives that $\langle m^*_1, \partial_{\xi}\varphi_1\rangle(\{0\})=\langle f_{1,0}, \varphi_1\rangle$.

 Similarly, in view of (\ref{equ-3}), for all $\varphi_2(y,\zeta)\in C^{\infty}_c(\mathbb{T}^N_y\times \mathbb{R}_\zeta)$, we have
\begin{eqnarray*}\notag
\langle \bar{f}^{\pm}_2(t),\varphi_2\rangle&=&\langle \bar{f}_{2,0}, \varphi_2\rangle\delta_0([0,t])+\int^t_0\langle \bar{f}_2(s), a^2(\zeta)\Delta_y \varphi_2(y,\zeta)\rangle ds\\ \notag
&& +\int^t_0\int_{\mathbb{T}^N}\int_{\mathbb{R}}\partial_{\zeta}\varphi_2 (y,\zeta)|\nabla_y \Psi(\zeta)|^2d\nu^{2,\varepsilon}_{y,s}(\zeta)dyds\\ \notag
&&-\sum_{k\geq 1}\int^t_0\int_{\mathbb{T}^N}\int_{\mathbb{R}}  g^k(y,\zeta)\varphi_2 (y,\zeta)h^{\varepsilon}_k(s)d\nu^{2,\varepsilon}_{y,s}(\zeta)dyds +\langle m^\varepsilon_2,\partial_{\zeta} \varphi_2\rangle([0,t]),
\end{eqnarray*}
where $f_{2,0}=I_{u_0>\zeta}$ and $\nu^{2,\varepsilon}_{y,s}(\zeta)=\partial_{\zeta}\bar{f}^{\pm}_2(s,y,\zeta)=-\partial_{\zeta}{f}^{\pm}_2(s,y,\zeta)=\delta_{v^{\varepsilon, \pm}(y,t)=\zeta}$.
Denote the duality distribution over $\mathbb{T}^N_x\times \mathbb{R}_\xi\times \mathbb{T}^N_y\times \mathbb{R}_\zeta$ by $\langle\langle\cdot,\cdot\rangle\rangle$. Setting $\alpha(x,\xi,y,\zeta)=\varphi_1(x,\xi)\varphi_2(y,\zeta)$.
Using It\^{o} formula for continuous semimartingales, we obtain that
\begin{eqnarray*}
\langle f^+_1(t), \varphi_1\rangle\langle \bar{f}^+_2(t),\varphi_2 \rangle=\langle\langle f^+_1(t)\bar{f}^+_2(t), \alpha \rangle\rangle
\end{eqnarray*}
satisfies
\begin{eqnarray*}\notag
&& \langle\langle{f}^{\pm}_1(t)\bar{f}^{\pm}_2(t), \alpha \rangle\rangle\\ \notag
&=&\langle\langle f_{1,0} \bar{f}_{2,0}, \alpha \rangle\rangle
+\int^t_0\int_{(\mathbb{T}^N)^2}\int_{\mathbb{R}^2}f_1\bar{f}_2(a^2(\xi)\Delta_x+a^2(\zeta)\Delta_y)\alpha d\xi d\zeta dxdyds\\ \notag
&& +\int^t_0\int_{(\mathbb{T}^N)^2}\int_{\mathbb{R}^2}f^{\pm}_1\partial_{\zeta}\alpha |\nabla_y \Psi(\zeta)|^2d\nu^{2,\varepsilon}_{y,s}(\zeta)dyds -\int^t_0\int_{(\mathbb{T}^N)^2}\int_{\mathbb{R}^2}\bar{f}^{\pm}_2\partial_{\xi}\alpha|\nabla_x \Psi(\xi)|^2d{\nu}^{1,\varepsilon}_{x,s}(\xi)dxds\\
&& +\frac{\varepsilon}{2}\int^t_0\int_{(\mathbb{T}^N)^2}\int_{\mathbb{R}^2}\partial_{\xi} \alpha \bar{f}^{\pm}_2(s,y,\zeta) G^2(x,\xi)d{\nu}^{1,\varepsilon}_{x,s}(\xi)d\zeta dxdyds\\ \notag
&& +\sum_{k\geq 1}\int^t_0\int_{(\mathbb{T}^N)^2}\int_{\mathbb{R}^2}\bar{f}^{\pm}_2(s,y,\zeta)\alpha g^k(x,\xi)h^{\varepsilon}_k(s)d\zeta d{\nu}^{1,\varepsilon}_{x,s}(\xi)dxdyds\\ \notag
&& -\sum_{k\geq 1}\int^t_0\int_{(\mathbb{T}^N)^2}\int_{\mathbb{R}^2}{f}^{\pm}_1(s,x,\xi)\alpha g^k(y,\zeta)h^{\varepsilon}_k(s)d\xi d\nu^{2,\varepsilon}_{y,s}(\zeta)dxdyds\\ \notag
&& -\int^t_0\int_{(\mathbb{T}^N)^2}\int_{\mathbb{R}^2}\bar{f}^{\pm}_2(s,y,\zeta)\partial_{\xi} \alpha d{m}^\varepsilon_1(x,\xi,s)d\zeta dy+\int^t_0\int_{(\mathbb{T}^N)^2}\int_{\mathbb{R}^2}{f}^{\pm}_1(s,x,\xi)\partial_{\zeta} \alpha dm^\varepsilon_2(y,\zeta,s)d\xi dx\\ \notag
&& +\sqrt{\varepsilon}\sum_{k\geq 1}\int^t_0\int_{(\mathbb{T}^N)^2}\int_{\mathbb{R}^2}\bar{f}^{\pm}_2(s,y,\zeta) g^k(x,\xi)\alpha d\zeta d{\nu}^{1,\varepsilon}_{x,s}(\xi)dxdyd\beta_k(s)\\
&=:&\langle\langle {f}_{1,0}\bar{f}_{2,0}, \alpha \rangle\rangle+J_1+J_2+J_3+J_4+J_5+J_6+J_7+J_8+J_9,\quad a.s..
\end{eqnarray*}
Similarly, we get
\begin{eqnarray*}\notag
&& \langle\langle\bar{f}^{\pm}_1(t){f}^{\pm}_2(t), \alpha \rangle\rangle\\ \notag
&=&\langle\langle \bar{f}_{1,0} {f}_{2,0}, \alpha \rangle\rangle
+\int^t_0\int_{(\mathbb{T}^N)^2}\int_{\mathbb{R}^2}\bar{f}_1{f}_2(a^2(\xi)\Delta_x+a^2(\zeta)\Delta_y) \alpha d\xi d\zeta dxdyds\\ \notag
&& -\int^t_0\int_{(\mathbb{T}^N)^2}\int_{\mathbb{R}^2}\bar{f}^{\pm}_1\partial_{\zeta}\alpha |\nabla_y \Psi(\zeta)|^2d\nu^{2,\varepsilon}_{y,s}(\zeta)dyds+\int^t_0\int_{(\mathbb{T}^N)^2}\int_{\mathbb{R}^2}{f}^{\pm}_2\partial_{\xi}\alpha|\nabla_x \Psi(\xi)|^2d{\nu}^{1,\varepsilon}_{x,s}(\xi)dxds\\
&& -\frac{\varepsilon}{2}\int^t_0\int_{(\mathbb{T}^N)^2}\int_{\mathbb{R}^2}\partial_{\xi} \alpha {f}^{\pm}_2(s,y,\zeta) G^2(x,\xi)d{\nu}^{1,\varepsilon}_{x,s}(\xi)d\zeta dxdyds\\ \notag
&& -\sum_{k\geq 1}\int^t_0\int_{(\mathbb{T}^N)^2}\int_{\mathbb{R}^2}{f}^{\pm}_2(s,y,\zeta)\alpha g^k(x,\xi)h^{\varepsilon}_k(s)d\zeta d{\nu}^{1,\varepsilon}_{x,s}(\xi)dxdyds\\ \notag
&& +\sum_{k\geq 1}\int^t_0\int_{(\mathbb{T}^N)^2}\int_{\mathbb{R}^2}\bar{f}^{\pm}_1(s,x,\xi)\alpha g^k(y,\zeta)h^{\varepsilon}_k(s)d\xi d\nu^{2,\varepsilon}_{y,s}(\zeta)dxdyds\\ \notag
 && +\int^t_0\int_{(\mathbb{T}^N)^2}\int_{\mathbb{R}^2}{f}^{\pm}_2(s,y,\zeta)\partial_{\xi} \alpha d{m}^\varepsilon_1(x,\xi,s)d\zeta dy-\int^t_0\int_{(\mathbb{T}^N)^2}\int_{\mathbb{R}^2}\bar{f}^{\pm}_1(s,x,\xi)\partial_{\zeta} \alpha dm^\varepsilon_2(y,\zeta,s)d\xi dx\\ \notag
&& -\sqrt{\varepsilon}\sum_{k\geq 1}\int^t_0\int_{(\mathbb{T}^N)^2}\int_{\mathbb{R}^2}{f}^{\pm}_2(s,y,\zeta) g^k(x,\xi)\alpha d\zeta d{\nu}^{1,\varepsilon}_{x,s}(\xi)dxdyd\beta_k(s)\\
&=:&\langle\langle \bar{f}_{1,0}{f}_{2,0}, \alpha \rangle\rangle+\bar{J}_1+\bar{J}_2+\bar{J}_3+\bar{J}_4+\bar{J}_5+\bar{J}_6+\bar{J}_7+\bar{J}_8+\bar{J}_9,\quad a.s..
\end{eqnarray*}
By the same technology as Proposition 4.1 in \cite{Z-19}, we can deduce that the above two equations remain true if $\alpha \in C^{\infty}_b(\mathbb{T}^N_x\times \mathbb{R}_\xi\times \mathbb{T}^N_y\times \mathbb{R}_\zeta)$ is compactly supported in a neighbourhood of the diagonal
\[
\Big\{(x,\xi,x,\xi); x\in \mathbb{T}^N, \xi\in \mathbb{R}\Big\}.
\]
Taking $\alpha(x,y,\xi,\zeta)=\rho_{\gamma}(x-y)\psi_{\delta}(\xi-\zeta)$, where $\rho_{\gamma}$ and $\psi_{\delta}$ are approximations to the identity on $\mathbb{T}^N$ and $\mathbb{R}$, respectively. Then, we have
\begin{eqnarray}\notag
&&\int_{(\mathbb{T}^N)^2}\int_{\mathbb{R}^2}\rho_\gamma (x-y)\psi_{\delta}(\xi-\zeta)(f^{\pm}_1(x,t,\xi)\bar{f}^{\pm}_2(y,t,\zeta)+\bar{f}^{\pm}_1(x,t,\xi){f}^{\pm}_2(y,t,\zeta))d\xi d\zeta dxdy\\ \notag
&\leq& \int_{(\mathbb{T}^N)^2}\int_{\mathbb{R}^2}\rho_\gamma (x-y)\psi_{\delta}(\xi-\zeta)(f_{1,0}(x,\xi)\bar{f}_{2,0}(y,\zeta)+\bar{f}_{1,0}(x,\xi){f}_{2,0}(y,\zeta))d\xi d\zeta dxdy\\ \label{eee-2}
&& +\sum^{9}_{i=1}(\tilde{J}_i+\tilde{\bar{J}}_i), \quad a.s.,
\end{eqnarray}
where $\tilde{J}_i, \tilde{\bar{J}}_i$  in (\ref{eee-2}) are the corresponding $J_i, \bar{J}_i$ with $\alpha(x,y,\xi,\zeta)=\rho_{\gamma}(x-y)\psi_{\delta}(\xi-\zeta)$, for $i=1,\cdot\cdot\cdot,9$.

In view of (\ref{P-11}), it holds that
\begin{eqnarray*}
\tilde{J}_7&=&\int^t_0\int_{(\mathbb{T}^N)^2}\int_{\mathbb{R}^2}\bar{f}^{\pm}_2(s,y,\zeta)\partial_{\zeta} \alpha d{m}^\varepsilon_1(x,\xi,s)d\zeta dy\\
&=&-\int^t_0\int_{(\mathbb{T}^N)^2}\int_{\mathbb{R}^2}\alpha d{m}^\varepsilon_1(x,\xi,s)d\nu^{2,\varepsilon,\pm}_{y,s}(\zeta)dy\leq 0, \quad a.s.,
\end{eqnarray*}
and
\begin{eqnarray*}
\tilde{J}_8&=&-\int^t_0\int_{(\mathbb{T}^N)^2}\int_{\mathbb{R}^2}{f}^{\pm}_1(s,x,\xi)\partial_{\xi} \alpha dm^\varepsilon_2(y,\zeta,s)d\xi dx\\
&=&-\int^t_0\int_{(\mathbb{T}^N)^2}\int_{\mathbb{R}^2} \alpha dm^\varepsilon_2(y,\zeta,s)d\nu^{1,\varepsilon,\pm}_{x,s} dx\leq 0, \quad a.s..
\end{eqnarray*}
By the same method as above, we deduce that $\tilde{\bar{J}}_7+ \tilde{\bar{J}}_8\leq 0$, a.s..

Moreover,
applying (\ref{e-13}) with $\lambda=0$ and $R_{\lambda}=\infty$, we have for any $\alpha\in (0,1\wedge \frac{m}{2})$, there exists a constant $N_0$ independent of $\gamma,\delta,\lambda,\varepsilon$ such that
\begin{eqnarray*}\label{e-35}
\sum^{3}_{i=1}(\tilde{J}_i+\tilde{\bar{J}}_i)\leq N_0\gamma^{-2}\delta^{2\alpha}(1+\|\bar{u}^\varepsilon\|^m_{L^m([0,T]\times \mathbb{T}^N)}+\|v^{\varepsilon}\|^m_{L^m([0,T]\times \mathbb{T}^N)}), \quad a.s..
\end{eqnarray*}
With the aid of $\chi_1(\xi,\zeta)$, $\chi_2(\xi,\zeta)$ and by using (\ref{equ-28}), we have
\begin{eqnarray*}\notag
\tilde{\bar{J}}_4&=&\tilde{J}_4\\
&=&\frac{\varepsilon}{2}\int^t_0\int_{(\mathbb{T}^N)^2}\int_{\mathbb{R}^2}\alpha  G^2(x,\xi)d\nu^{1,\varepsilon}_{x,s}\otimes d\nu^{2,\varepsilon}_{y,s}(\xi,\zeta)dxdyds\\ \notag
&\leq& \frac{\varepsilon}{2}K^2\int^t_0\int_{(\mathbb{T}^N)^2}\int_{\mathbb{R}^2}\alpha(1+|\xi|^2)  d\nu^{1,\varepsilon}_{x,s}\otimes d\nu^{2,\varepsilon}_{y,s}(\xi,\zeta)dxdyds\\ \notag
&\leq&\frac{\varepsilon}{2}K^2\int^t_0\int_{(\mathbb{T}^N)^2}\int_{\mathbb{R}^2}\alpha  d\nu^{1,\varepsilon}_{x,s}\otimes d\nu^{2,\varepsilon}_{y,s}(\xi,\zeta)dxdyds\\ \notag
&& +\frac{\varepsilon}{2}K^2\int^t_0\int_{(\mathbb{T}^N)^2}\int_{\mathbb{R}^2}\alpha  |\xi|^2d\nu^{1,\varepsilon}_{x,s}\otimes d\nu^{2,\varepsilon}_{y,s}(\xi,\zeta)dxdyds.
\end{eqnarray*}
Clearly, it holds that
\begin{eqnarray}\notag
&&\int_{(\mathbb{T}^N)^2}\int_{\mathbb{R}^2}\alpha  d\nu^{1,\varepsilon}_{x,s}\otimes d\nu^{2,\varepsilon}_{y,s}(\xi,\zeta)dxdy\\ \notag
&\leq& \|\psi_{\delta}\|_{L^{\infty}}\int_{(\mathbb{T}^N)^2}\int_{\mathbb{R}^2}\rho_\gamma(x-y)d\nu^{1,\varepsilon}_{x,s}\otimes d\nu^{2,\varepsilon}_{y,s}(\xi,\zeta)dxdy\\ \notag
&\leq & \|\psi_{\delta}\|_{L^{\infty}}\int_{(\mathbb{T}^N)^2}\rho_\gamma(x-y)dxdy\\
\label{q-1}
&\leq&\delta^{-1}.
\end{eqnarray}
Moreover, it follows that
\begin{eqnarray}\notag
&&\int_{(\mathbb{T}^N)^2}\int_{\mathbb{R}^2}\alpha  |\xi|^2d\nu^{1,\varepsilon}_{x,s}\otimes d\nu^{2,\varepsilon}_{y,s}(\xi,\zeta)dxdy\\ \notag
&\leq&\int_{(\mathbb{T}^N)^2}\rho_\gamma(x-y)
\int_{\mathbb{R}^2}\psi_{\delta}(\xi-\zeta)|\xi|^2d\nu^{1,\varepsilon}_{x,s}\otimes d\nu^{2,\varepsilon}_{y,s}(\xi,\zeta)dxdy\\ \notag
&\leq&\|\psi_{\delta}\|_{L^{\infty}}\|\rho_\gamma\|_{L^{\infty}} \int_{(\mathbb{T}^N)^2}\int_{\mathbb{R}^2}|\xi|^2d\nu^{1,\varepsilon}_{x,s}\otimes d\nu^{2,\varepsilon}_{y,s}(\xi,\zeta)dxdy\\
\label{q-2}
&\leq& C\delta^{-1}\gamma^{-N}\|\bar{u}^{\varepsilon}(s)\|_{L^2(\mathbb{T}^N)}.
\end{eqnarray}
Hence, combining (\ref{q-1}) and (\ref{q-2}), we deduce that
\begin{eqnarray*}
\tilde{\bar{J}}_4=\tilde{J}_4
\leq \frac{\varepsilon}{2} K^2T\delta^{-1}+\frac{\varepsilon}{2}CK^2\delta^{-1}\gamma^{-N}T^{\frac{1}{2}}\|\bar{u}^{\varepsilon}\|_{L^2([0,T];L^2(\mathbb{T}^N))}, \quad a.s..
\end{eqnarray*}
Recall
\begin{eqnarray}\label{eee-3}
\chi_2(\zeta,\xi)=\int^{\infty}_{\zeta}\psi_{\delta}(\xi-\zeta')d\zeta'.
\end{eqnarray}
Using the similar arguments as in the proof of Proposition \ref{prp-1}, we have
\begin{eqnarray*}
\tilde{\bar{J}}_5+\tilde{\bar{J}}_6&=&\tilde{J}_5+\tilde{J}_6\\
&=& \sum_{k\geq 1}\int^t_0\int_{(\mathbb{T}^N)^2}\int_{\mathbb{R}^2}\chi_2(\zeta,\xi)\rho_\gamma(x-y)\Big( g^k(x,\xi) - g^k(y,\zeta)\Big)h^{\varepsilon}_k(s)d \nu^{1,\varepsilon}_{x,s}\otimes d\nu^{2,\varepsilon}_{y,s}(\xi,\zeta) dxdyds\\
&\leq& \sum_{k\geq 1}\int^t_0\int_{(\mathbb{T}^N)^2}\int_{\mathbb{R}^2}\chi_2(\zeta,\xi)\rho_\gamma(x-y)| g^k(x,\xi)- g^k(y,\zeta)||h^{\varepsilon}_k(s)| d \nu^{1,\varepsilon}_{x,s}\otimes d\nu^{2,\varepsilon}_{y,s}(\xi,\zeta) dxdyds\\
&\leq& \int^t_0\int_{(\mathbb{T}^N)^2}\int_{\mathbb{R}^2}\chi_2(\zeta,\xi)\rho_\gamma(x-y)\Big(\sum_{k\geq 1}| g^k(x,\xi)- g^k(y,\zeta)|^2\Big)^{\frac{1}{2}}\Big(\sum_{k\geq 1}|h^{\varepsilon}_k(s)|^2\Big)^{\frac{1}{2}} d \nu^{1,\varepsilon}_{x,s}\otimes d\nu^{2,\varepsilon}_{y,s}(\xi,\zeta) dxdyds\\
&\leq& K\int^t_0|h^\varepsilon(s)|_{U}\int_{(\mathbb{T}^N)^2}\int_{\mathbb{R}^2}\chi_2(\zeta,\xi)\rho_\gamma(x-y)|x-y| d \nu^{1,\varepsilon}_{x,s}\otimes d\nu^{2,\varepsilon}_{y,s}(\xi,\zeta) dxdyds\\
&&\ + K\int^t_0|h^\varepsilon(s)|_{U}
\int_{(\mathbb{T}^N)^2}\rho_\gamma(x-y)\int_{\mathbb{R}^2}\chi_2(\zeta,\xi)|\xi-\zeta|d \nu^{1,\varepsilon}_{x,s}\otimes d\nu^{2,\varepsilon}_{y,s}(\xi,\zeta) dxdyds\\
&=:& \tilde{J}_{5,1}+\tilde{J}_{6,1}.
\end{eqnarray*}
By
\begin{eqnarray*}
\int_{(\mathbb{T}^N)^2}\rho_\gamma(x-y)|x-y|dxdy&\leq& \gamma,\\
\int_{(\mathbb{T}^N)^2}\chi_2(\zeta,\xi) d \nu^{1,\varepsilon}_{x,s}\otimes d\nu^{2,\varepsilon}_{y,s}(\xi,\zeta)&\leq& 1,\quad a.s.,
\end{eqnarray*}
it follows that
\begin{eqnarray*}
\tilde{J}_{5,1}\leq  K\gamma(T+M), \quad a.s..
\end{eqnarray*}
Using the same method as the estimate of $\tilde{K}_{2,2}$ in Theorem \ref{thm-2}, we have
\begin{eqnarray*}
\tilde{J}_{6,1}&\leq& 2K\delta(T+M)\\
&& +K\int^t_0|h^\varepsilon(s)|_{U}
\int_{(\mathbb{T}^N)^2}\int_{\mathbb{R}^2}\rho_\gamma (x-y)\psi_{\delta}(\xi-\zeta)(f^{\pm}_1\bar{f}^{\pm}_2+\bar{f}^{\pm}_1{f}^{\pm}_2)d\xi d\zeta dxdyds,\quad a.s..
\end{eqnarray*}
Based on the above estimates, it yields
\begin{eqnarray*}
\tilde{J}_5+\tilde{J}_6
&=&\tilde{J}_5+\tilde{J}_6\\
&\leq&K(\gamma+2\delta)(T+M)\\
&& +K\int^t_0|h^\varepsilon(s)|_{U}
\int_{(\mathbb{T}^N)^2}\int_{\mathbb{R}^2}\rho_\gamma (x-y)\psi_{\delta}(\xi-\zeta)(f^{\pm}_1\bar{f}^{\pm}_2+\bar{f}^{\pm}_1{f}^{\pm}_2)d\xi d\zeta dxdyds, \quad a.s..
\end{eqnarray*}
Combining all the previous estimates, it follows that
\begin{eqnarray*}
&& \int_{(\mathbb{T}^N)^2}\int_{\mathbb{R}^2}\rho_\gamma (x-y)\psi_{\delta}(\xi-\zeta)(f^{\pm}_1(x,t,\xi)\bar{f}^{\pm}_2(y,t,\zeta)+\bar{f}^{\pm}_1(x,t,\xi){f}^{\pm}_2(y,t,\zeta))d\xi d\zeta dxdy\\ \notag
&\leq& \int_{\mathbb{T}^N}\int_{\mathbb{R}}(f_{1,0}(x,\xi)\bar{f}_{2,0}(x,\xi)+\bar{f}_{1,0}(x,\xi){f}_{2,0}(x,\xi))dxd\xi +\mathcal{E}_0(\gamma,\delta)\\
&& +N_0\gamma^{-2}\delta^{2\alpha}(1+\|\bar{u}^\varepsilon\|^m_{L^m([0,T]\times \mathbb{T}^N)}+\|v^{\varepsilon}\|^m_{L^m([0,T]\times \mathbb{T}^N)})+\varepsilon K^2T\delta^{-1}\\ \notag
&&+\varepsilon CK^2\delta^{-1}\gamma^{-N}T^{\frac{1}{2}}\|\bar{u}^{\varepsilon}\|_{L^2([0,T];L^2(\mathbb{T}^N))}+2K(2\delta+\gamma)(T+M)\\
&& +|\tilde{J}_9|(t)+|\tilde{\bar{J}}_9|(t)+2K\int^t_0|h^\varepsilon(s)|_{U}
\int_{(\mathbb{T}^N)^2}\int_{\mathbb{R}^2}\rho_\gamma (x-y)\psi_{\delta}(\xi-\zeta)(f^{\pm}_1\bar{f}^{\pm}_2+\bar{f}^{\pm}_1{f}^{\pm}_2)d\xi d\zeta dxdyds,\quad a.s..
\end{eqnarray*}
Applying Gronwall inequality, we get
\begin{eqnarray*}
&& \int_{(\mathbb{T}^N)^2}\int_{\mathbb{R}^2}\rho_\gamma (x-y)\psi_{\delta}(\xi-\zeta)(f^{\pm}_1(x,t,\xi)\bar{f}^{\pm}_2(y,t,\zeta)+\bar{f}^{\pm}_1(x,t,\xi){f}^{\pm}_2(y,t,\zeta))d\xi d\zeta dxdy\\ \notag
&\leq& e^{K(T+M)}\Big[\int_{\mathbb{T}^N}\int_{\mathbb{R}}(f_{1,0}\bar{f}_{2,0}+\bar{f}_{1,0}{f}_{2,0})dxd\xi+\mathcal{E}_0(\gamma,\delta)\Big]\\
&&\ +e^{K(T+M)}\Big[N_0\gamma^{-2}\delta^{2\alpha}(1+\|\bar{u}^\varepsilon\|^m_{L^m([0,T]\times \mathbb{T}^N)}+\|v^{\varepsilon}\|^m_{L^m([0,T]\times \mathbb{T}^N)})+\varepsilon K^2T\delta^{-1}\\
&& +\varepsilon CK^2\delta^{-1}\gamma^{-N}T^{\frac{1}{2}}\|\bar{u}^{\varepsilon}\|_{L^2([0,T];L^2(\mathbb{T}^N))}+2K(2\delta+\gamma)(T+M)+|\tilde{J}_9|(t)+|\tilde{\bar{J}}_9|(t)\Big], \quad a.s..
\end{eqnarray*}
Thus, collecting all the above estimates, we deduce that
\begin{eqnarray}\notag
&&\int_{\mathbb{T}^N}\int_{\mathbb{R}}(f^{\pm}_1(x,t,\xi)\bar{f}^{\pm}_2(x,t,\xi)+\bar{f}^{\pm}_1(x,t,\xi){f}^{\pm}_2(x,t,\xi))dxd\xi\\ \notag
&=&\int_{(\mathbb{T}^N)^2}\int_{\mathbb{R}^2}(f^{\pm}_1(x,t,\xi)\bar{f}^{\pm}_2(y,t,\zeta)+\bar{f}^{\pm}_1(x,t,\xi){f}^{\pm}_2(y,t,\zeta))\rho_{\gamma}(x-y)\psi_{\delta}(\xi-\zeta)dxdyd\xi d\zeta+\mathcal{E}_t(\gamma,\delta)\\
\notag
&\leq& e^{K(T+M)}\Big[\int_{\mathbb{T}^N}\int_{\mathbb{R}}(f_{1,0}\bar{f}_{2,0}+\bar{f}_{1,0}{f}_{2,0})dxd\xi+\mathcal{E}_0(\gamma,\delta)\Big]\\
\notag
&&\ +e^{K(T+M)}\Big[N_0\gamma^{-2}\delta^{2\alpha}(1+\|\bar{u}^\varepsilon\|^m_{L^m([0,T]\times \mathbb{T}^N)}+\|v^{\varepsilon}\|^m_{L^m([0,T]\times \mathbb{T}^N)})+\varepsilon K^2T\delta^{-1}\\ \notag
&& +\varepsilon CK^2\delta^{-1}\gamma^{-N}T^{\frac{1}{2}}\|\bar{u}^{\varepsilon}\|_{L^2([0,T];L^2(\mathbb{T}^N))}+2K(2\delta+\gamma)(T+M)+|\tilde{J}_9|(t)+|\tilde{\bar{J}}_9|(t)\Big]+\mathcal{E}_t(\gamma,\delta)\\
\label{qeq-21}
&=:& e^{K(T+M)}\int_{\mathbb{T}^N}\int_{\mathbb{R}}(f_{1,0}\bar{f}_{2,0}+\bar{f}_{1,0}{f}_{2,0})dxd\xi
 +e^{K(T+M)}(|\tilde{J}_9|(t)+|\tilde{\bar{J}}_9|(t))+r(\varepsilon,\gamma,\delta,t), \quad a.s.,
\end{eqnarray}
where the remainder is given by
 \begin{eqnarray*}
r(\varepsilon,\gamma,\delta,t)&=&e^{K(T+M)}\Big[N_0\gamma^{-2}\delta^{2\alpha}(1+\|\bar{u}^\varepsilon\|^m_{L^m([0,T]\times \mathbb{T}^N)}+\|v^{\varepsilon}\|^m_{L^m([0,T]\times \mathbb{T}^N)})+\varepsilon K^2T\delta^{-1}\\ \notag
&& +\varepsilon CK^2\delta^{-1}\gamma^{-N}T^{\frac{1}{2}}\|\bar{u}^{\varepsilon}\|_{L^2([0,T];L^2(\mathbb{T}^N))}+2K(2\delta+\gamma)(T+M)+\mathcal{E}_0(\gamma,\delta)\Big]+\mathcal{E}_t(\gamma,\delta).
\end{eqnarray*}
Applying the Burkholder-Davis-Gundy inequality, and utilizing (\ref{eee-3}), (\ref{equ-28}) that
\begin{eqnarray*}
\mathbb{E}\sup_{t\in [0,T]}|\tilde{J}_9|(t)&\leq&\sqrt{\varepsilon}\mathbb{E}\sup_{t\in [0,T]}\Big|\sum_{k\geq 1}\int^t_0\int_{(\mathbb{T}^N)^2}\int_{\mathbb{R}^2}\bar{f}^{\pm}_2(s,y,\zeta) g^k(x,\xi)\alpha d\zeta d{\nu}^{1,\varepsilon}_{x,s}(\xi)dxdyd\beta_k(s)\Big|\\
&=& \sqrt{\varepsilon}\mathbb{E}\sup_{t\in [0,T]}\Big|\sum_{k\geq 1}\int^t_0\int_{(\mathbb{T}^N)^2}\int_{\mathbb{R}^2}\bar{f}^{\pm}_2(s,y,\zeta)\partial_{\zeta} \chi_2(\xi,\zeta) \rho_{\gamma}(x-y) g^k(x,\xi) d\zeta d{\nu}^{1,\varepsilon}_{x,s}(\xi)dxdyd\beta_k(s)\Big|\\
&=& \sqrt{\varepsilon}\mathbb{E}\sup_{t\in [0,T]}\Big|\sum_{k\geq 1}\int^t_0\int_{(\mathbb{T}^N)^2}\int_{\mathbb{R}^2}\chi_2(\xi,\zeta) \rho_{\gamma}(x-y) g^k(x,\xi) d\nu^{1,\varepsilon}_{x,s}\otimes d\nu^{2,\varepsilon}_{y,s}(\xi,\zeta)dxdyd\beta_k(s)\Big|\\
&\leq& \sqrt{\varepsilon}\mathbb{E}\Big[\int^T_0\int_{(\mathbb{T}^N)^2}\int_{\mathbb{R}^2}\gamma^2_2(\xi,\zeta) \rho^2_{\gamma}(x-y)\Big(\sum_{k\geq 1}g^2_{k}(x,\xi)\Big)d\nu^{1,\varepsilon}_{x,s}\otimes d\nu^{2,\varepsilon}_{y,s}(\xi,\zeta)dxdyds\Big]^{\frac{1}{2}}\\
&\leq& \sqrt{\varepsilon}K\mathbb{E}\Big[\int^T_0\int_{(\mathbb{T}^N)^2}\int_{\mathbb{R}^2}\gamma^2_2(\xi,\zeta) \rho^2_{\gamma}(x-y)(1+|\xi|^2)d\nu^{1,\varepsilon}_{x,s}\otimes d\nu^{2,\varepsilon}_{y,s}(\xi,\zeta)dxdyds\Big]^{\frac{1}{2}}\\
&\leq& \sqrt{\varepsilon}K\gamma^{-N}\Big[\mathbb{E}\int^T_0\int_{\mathbb{T}^N}(1+|\bar{u}^{\varepsilon}(x,s)|^2)dxds\Big]^{\frac{1}{2}}\\
&=:& \sqrt{\varepsilon}K\gamma^{-N}\mathcal{R},
\end{eqnarray*}
with
\begin{eqnarray*}
\mathcal{R}=\sup_{0\leq \varepsilon\leq 1}\Big[\mathbb{E}\int^T_0\int_{\mathbb{T}^N}(1+|\bar{u}^{\varepsilon}(x,s)|^2)dxds\Big]^{\frac{1}{2}}<\infty,
\end{eqnarray*}
where we have used (\ref{e-36}).

By the same method as above, we deduce that
\begin{eqnarray*}
\mathbb{E}\sup_{t\in [0,T]}|\tilde{\bar{J}}_9|(t)
\leq \sqrt{\varepsilon}K\gamma^{-N}\mathcal{R}.
\end{eqnarray*}
For the remainder, we have
\begin{eqnarray}\notag
 &&\sup_{t\in [0,T]} r(\varepsilon,\gamma,\delta,t)\\ \notag
 &\leq& e^{K(T+M)}\Big[N_0\gamma^{-2}\delta^{2\alpha}(1+\|\bar{u}^\varepsilon\|^m_{L^m([0,T]\times \mathbb{T}^N)}+\|v^{\varepsilon}\|^m_{L^m([0,T]\times \mathbb{T}^N)})+\varepsilon K^2T\delta^{-1}\\ \notag
\label{qeq-23}
&&\ +\varepsilon CK^2\delta^{-1}\gamma^{-N}T^{\frac{1}{2}}\|\bar{u}^{\varepsilon}\|_{L^2([0,T];L^2(\mathbb{T}^N))}+2K(2\delta+\gamma)(T+M)+\mathcal{E}_0(\gamma,\delta)\Big]+\sup_{t\in [0,T]}\mathcal{E}_t(\gamma,\delta).
\end{eqnarray}
In the following, we aim to make estimate of error term $\sup_{t\in [0,T]}\mathcal{E}_t(\gamma, \delta)$.
For any $t\in [0,T]$, we have
\begin{eqnarray*}
\mathcal{E}_t(\gamma, \delta)&=&\int_{\mathbb{T}^N}\int_{\mathbb{R}}(f^{\pm}_1(x,t,\xi)\bar{f}^{\pm}_2(x,t,\xi)+\bar{f}^{\pm}_1(x,t,\xi){f}^{\pm}_2(x,t,\xi))d\xi dx
\\
&& -\int_{(\mathbb{T}^N)^2}\int_{\mathbb{R}^2}(f^{\pm}_1(x,t,\xi)\bar{f}^{\pm}_2(y,t,\zeta)+\bar{f}^{\pm}_1(x,t,\xi){f}^{\pm}_2(y,t,\zeta))\rho_{\gamma}(x-y)\psi_{\delta}(\xi-\zeta)dxdyd\xi d\zeta \\
&=& \Big[\int_{\mathbb{T}^N}\int_{\mathbb{R}}(f^{\pm}_1(x,t,\xi)\bar{f}^{\pm}_2(x,t,\xi)+\bar{f}^{\pm}_1(x,t,\xi){f}^{\pm}_2(x,t,\xi))d\xi dx\\
&& -\int_{(\mathbb{T}^N)^2}\int_{\mathbb{R}}\rho_{\gamma}(x-y)(f^{\pm}_1(x,t,\xi)\bar{f}^{\pm}_2(y,t,\xi)+\bar{f}^{\pm}_1(x,t,\xi){f}^{\pm}_2(y,t,\xi))d\xi dxdy\Big]\\
&& +\Big[\int_{(\mathbb{T}^N)^2}\int_{\mathbb{R}}\rho_{\gamma}(x-y)(f^{\pm}_1(x,t,\xi)\bar{f}^{\pm}_2(y,t,\xi)+\bar{f}^{\pm}_1(x,t,\xi){f}^{\pm}_2(y,t,\xi))d\xi dxdy\\
&& -\int_{(\mathbb{T}^N)^2}\int_{\mathbb{R}^2}(f^{\pm}_1(x,t,\xi)\bar{f}^{\pm}_2(y,t,\zeta)+\bar{f}^{\pm}_1(x,t,\xi){f}^{\pm}_2(y,t,\zeta))\rho_{\gamma}(x-y)\psi_{\delta}(\xi-\zeta)dxdyd\xi d\zeta\Big]\\
&=:&H_1+H_2,
\end{eqnarray*}
Applying the same method as (\ref{qeq-14}) and (\ref{qeq-14-1}), it follows that
\begin{eqnarray}\label{qeq-10}
|H_2|\leq 2\delta, \quad a.s..
\end{eqnarray}
Moreover, it is easy to deduce that
\begin{eqnarray*}
|H_1|&\leq& \Big|\int_{(\mathbb{T}^N)^2}\rho_{\gamma}(x-y)\int_{\mathbb{R}}I_{\bar{u}^{\varepsilon, \pm}(x,t)>\xi}(I_{v^{\varepsilon,\pm}(x,t)\leq \xi}-I_{v^{\varepsilon,\pm}(y,t)\leq \xi})d\xi dxdy\Big|\\
&& +\Big|\int_{(\mathbb{T}^N)^2}\rho_{\gamma}(x-y)\int_{\mathbb{R}}I_{\bar{u}^{\varepsilon, \pm}(x,t)\leq\xi}(I_{v^{\varepsilon,\pm}(x,t)> \xi}-I_{v^{\varepsilon,\pm}(y,t)> \xi})d\xi dxdy\Big|\\
&\leq& 2\int_{(\mathbb{T}^N)^2}\rho_{\gamma}(x-y)|v^{\varepsilon,\pm}(x,t)-v^{\varepsilon,\pm}(y,t)|dxdy.
\end{eqnarray*}
By (\ref{qeq-10}) and (\ref{qeq-18}), we have
\begin{eqnarray*}
&&\int_{(\mathbb{T}^N)^2}\rho_{\gamma}(x-y)|v^{\varepsilon,\pm}(x,t)-v^{\varepsilon,\pm}(y,t)|dxdy\\
&=& \int_{(\mathbb{T}^N)^2}\int_{\mathbb{R}}\rho_{\gamma}(x-y)(f^{\pm}_2(x,t,\xi)\bar{f}^{\pm}_2(y,t,\xi)+\bar{f}^{\pm}_2(x,t,\xi){f}^{\pm}_2(y,t,\xi))d\xi dxdy\\
&\leq& \int_{(\mathbb{T}^N)^2}\int_{\mathbb{R}^2}\rho_{\gamma}(x-y)\psi_{\delta}(\xi-\zeta)(f^{\pm}_2(x,t,\xi)\bar{f}^{\pm}_2(y,t,\zeta)+\bar{f}^{\pm}_2(x,t,\xi){f}^{\pm}_2(y,t,\zeta))d\xi d\zeta dxdy+2\delta\\
&\leq& e^{K(T+M)}\Big[\int_{\mathbb{T}^N}\int_{\mathbb{R}}(f_{2,0}\bar{f}_{2,0}+\bar{f}_{2,0}{f}_{2,0})d\xi dx+\mathcal{E}_0(\gamma,\delta)\Big]\\
&&\ +2e^{K(T+M)}[N_0\gamma^{-2}\delta^{2\alpha}\Big(1+2\|v^{\varepsilon}\|^m_{L^m([0,T]\times \mathbb{T}^N)}\Big)+2 K(\gamma+2\delta)(T+M)]+2\delta\\
&=&e^{K(T+M)}\mathcal{E}_0(\gamma,\delta)+2e^{K(T+M)}\Big[N_0\gamma^{-2}\delta^{2\alpha}\Big(1+2\|v^{\varepsilon}\|^m_{L^m([0,T]\times \mathbb{T}^N)}\Big)+2 K(\gamma+2\delta)(T+M)\Big]+2\delta,
\end{eqnarray*}
where $\mathcal{E}_0(\gamma, \delta)$ is defined by (\ref{qq-3}). Then,
\begin{eqnarray*}
|H_1|&\leq& 4\delta+2 e^{K(T+M)}\mathcal{E}_0(\gamma,\delta)\\
&& +4e^{K(T+M)}\Big[N_0\gamma^{-2}\delta^{2\alpha}\Big(1+2\|v^{\varepsilon}\|^m_{L^m([0,T]\times \mathbb{T}^N)}\Big)+2 K(\gamma+2\delta)(T+M)\Big],\quad a.s..
\end{eqnarray*}
Combining all the above estimates, we conclude that
\begin{eqnarray*}
\sup_{t\in [0,T]}\mathcal{E}_t(\gamma, \delta)&\leq& 6\delta+2e^{K(T+M)}\mathcal{E}_0(\gamma,\delta)\\
&&+4e^{K(T+M)}\Big[N_0\gamma^{-2}\delta^{2\alpha}\Big(1+2\|v^{\varepsilon}\|^m_{L^m([0,T]\times \mathbb{T}^N)}\Big)+2K(\gamma+2\delta)(T+M)\Big],\quad a.s..
\end{eqnarray*}
Hence, we deduce from (\ref{qeq-23}) that
\begin{eqnarray*}
&&\sup_{t\in [0,T]} r(\varepsilon,\gamma,\delta,t)\\
&\leq& e^{K(T+M)}\Big[N_0\gamma^{-2}\delta^{2\alpha}\Big(1+\|\bar{u}^\varepsilon\|^m_{L^m([0,T]\times \mathbb{T}^N)}+\|v^{\varepsilon}\|^m_{L^m([0,T]\times \mathbb{T}^N)}\Big)\\
&& +\varepsilon K^2T\delta^{-1}+\varepsilon CK^2\delta^{-1}\gamma^{-N}T^{\frac{1}{2}}\|\bar{u}^{\varepsilon}\|_{L^2([0,T];L^2(\mathbb{T}^N))}+2K(2\delta+\gamma)(T+M)\Big]\\
&&\ +6\delta+2e^{K(T+M)}\mathcal{E}_0(\gamma,\delta)+4e^{K(T+M)}\Big[N_0\gamma^{-2}\delta^{2\alpha}\Big(1+2\|v^{\varepsilon}\|^m_{L^m([0,T]\times \mathbb{T}^N)}\Big)+2K(\gamma+2\delta)(T+M)\Big],\quad a.s..
\end{eqnarray*}
Letting
\[
\delta=\varepsilon^{\frac{3}{4}},\quad \gamma=\varepsilon^{\frac{1}{4N+1}},
\]
then, we deduce that
\begin{eqnarray*}
\mathbb{E}\sup_{t\in [0,T]}|\tilde{J}_9|(t)
\leq K\mathcal{R}\varepsilon^{\frac{1}{2}-\frac{N}{4N+1}}\rightarrow 0, \quad {\rm{as}}\ \varepsilon \rightarrow 0,
\end{eqnarray*}
and
\begin{eqnarray*}
\mathbb{E}\sup_{t\in [0,T]}|\tilde{\bar{J}}_9|(t)
\rightarrow 0, \quad {\rm{as}}\ \varepsilon \rightarrow 0,
\end{eqnarray*}
which implies that $\sup_{t\in [0,T]}|\tilde{J}_9|(t)\rightarrow 0$ in probability and $\sup_{t\in [0,T]}|\tilde{\bar{J}}_9|(t)\rightarrow 0$ in probability, as $\varepsilon \rightarrow 0$ by Chebyshev inequality.

Taking $\alpha\in (\frac{1}{3N},1\wedge\frac{m}{2})$, it yields $b_0:=\frac{3 \alpha}{2}-\frac{2}{4N+1}>0$. By utilizing (\ref{e-36}) and (\ref{e-36-1}), it follows that
\begin{eqnarray*}\notag
&& \mathbb{E}\sup_{t\in [0,T]} r(\varepsilon,\gamma,\delta,t)\\ \notag
 &\leq& e^{K(T+M)}\Big[N_0\varepsilon^{b_0}\Big(1+\sup_{0\leq \varepsilon\leq 1}\mathbb{E}\|\bar{u}^\varepsilon\|^m_{L^m([0,T]\times \mathbb{T}^N)}+\sup_{0\leq \varepsilon\leq 1}\mathbb{E}\|v^{\varepsilon}\|^m_{L^m([0,T]\times \mathbb{T}^N)}\Big)+\varepsilon^{\frac{1}{4}} K^2T\\ \notag
&& +\varepsilon^{\frac{1}{4}-\frac{N}{4N+1}} CK^2T^{\frac{1}{2}}\sup_{0\leq \varepsilon\leq 1}\mathbb{E}\|\bar{u}^{\varepsilon}\|_{L^2([0,T]\times\mathbb{T}^N)}+2K(2\varepsilon^{\frac{3}{4}}+\varepsilon^{\frac{1}{4N+1}})(T+M)\Big]+6\varepsilon^{\frac{3}{4}}+2e^{K(T+M)}\mathcal{E}_0(\gamma,\delta)\\ \notag
&& +4e^{K(T+M)}\Big[N_0\varepsilon^{b_0}\Big(1+2\sup_{0\leq \varepsilon\leq 1}\mathbb{E}\|v^{\varepsilon}\|^m_{L^m([0,T]\times \mathbb{T}^N)}\Big)+2K(2\varepsilon^{\frac{3}{4}}+\varepsilon^{\frac{1}{4N+1}})(T+M)\Big]\\
\label{qeq-12}
 &\rightarrow& 0, \quad {\rm{as}} \quad \varepsilon\rightarrow0,
\end{eqnarray*}
which gives $\sup_{t\in [0,T]} r(\varepsilon,\gamma,\delta,t)\rightarrow 0$ in probability, as $\varepsilon\rightarrow0$.

Notice that we have $f_1=I_{\bar{u}^\varepsilon>\xi}$ and $f_2=I_{v^{\varepsilon}>\xi}$ with initial data $f_{1,0}=I_{u_0>\xi}$ and ${f}_{2,0}=I_{u_0>\xi}$, respectively. With the help of identity (\ref{equ-1}), we deduce from (\ref{qeq-21}) that
\begin{eqnarray*}\notag
\|\bar{u}^\varepsilon(t)-v^\varepsilon(t)\|_{L^1(\mathbb{T}^N)}
\leq e^{K(T+M)}(|\tilde{J}_9|(t)+|\tilde{\bar{J}}_9|(t))+ r(\varepsilon,\gamma,\delta,t), \quad a.s..
\end{eqnarray*}
Hence, we get
\begin{eqnarray*}
&&\|\bar{u}^\varepsilon-v^\varepsilon\|_{L^1([0,T];L^1(\mathbb{T}^N))}\\ \notag
&\leq& T\cdot \underset{0\leq t\leq T}{{\rm{ess\sup}}}\ \|\bar{u}^\varepsilon(t)-v^\varepsilon(t)\|_{L^1(\mathbb{T}^N)}\\
&\leq& Te^{K(T+M)}\Big(\sup_{t\in [0,T]}|\tilde{J}_9|(t)+\sup_{t\in [0,T]}|\tilde{\bar{J}}_9|(t)\Big)+T\cdot \sup_{t\in [0,T]}r(\varepsilon,\gamma,\delta,t)\\
&\rightarrow& 0
\end{eqnarray*}
in probability as $\varepsilon\rightarrow 0$. We complete the proof.
\end{proof}

\vskip 0.2cm
\noindent{\bf  Acknowledgements}\quad  This work is partly supported by National Natural Science Foundation of China (No. 11801032). Key Laboratory of Random Complex Structures and Data Science, Academy of Mathematics and Systems Science, Chinese Academy of Sciences (No. 2008DP173182). China Postdoctoral Science Foundation funded project (No. 2018M641204).

\def\refname{ References}

\end{document}